\documentclass[a4paper]{amsart}
\usepackage[utf8]{inputenc}
\usepackage[T1]{fontenc}
\usepackage{lmodern}
\usepackage{graphicx}
\usepackage[english]{babel}
\usepackage{subfigure}
\usepackage{mathrsfs}
\usepackage[bbgreekl]{mathbbol}
\usepackage{mathabx} 
\usepackage{amsmath, amsfonts, amssymb, dsfont}
\usepackage{stmaryrd}
\usepackage{mathrsfs}
\usepackage{mathtools}
\usepackage{listings}
\usepackage{subfigure}
\usepackage{floatrow}
\usepackage{fourier-orns}
\usepackage{amsthm}
\usepackage{algorithm}
\usepackage{algorithmic}
\usepackage{textcomp}
\usepackage{faktor}
\usepackage{xfrac}
\usepackage[all]{xy}
\usepackage[colorlinks = True , allcolors = blue]{hyperref}

\newcommand{\tsp}[1]{{}^t \! #1} 
\newcommand{\ii}{\mathrm{i}} 
\newcommand{\Span}{\operatorname{Span}} 
\newcommand{\supp}{\operatorname{supp}} 
\newcommand{\im}{\operatorname{im}} 
\newcommand*\diff{\mathop{}\!\mathrm{d}} 
\newcommand{\R}{\mathbb R} 
\newcommand{\N}{\mathbb N} 
\newcommand{\Z}{\mathbb Z} 
\newcommand{\CC}{\mathbb C} 
\newcommand{\CCC}{\mathscr{C}} 
\newcommand{\F}{\mathcal{F}} 
\newcommand{\Aut}{\operatorname{Aut}} 
\newcommand{\Ad}{\operatorname{Ad}} 
\newcommand{\g}{\mathfrak{g}} 
\newcommand{\p}{\mathfrak{p}} 
\newcommand{\n}{\mathfrak{n}} 
\newcommand{\adiab}{\mathfrak{a}} 

\newcommand{\rr}{\rightrightarrows} 
\newcommand{\act}{\curvearrowright} 
\newcommand{\lag}{\langle}
\newcommand{\rag}{\rangle}
\newcommand{\T}{\mathcal{T}} 
\newcommand{\hd}{\Omega^{1/2}} 
\newcommand{\rk}{\operatorname{rk}} 
\newcommand{\pr}{\operatorname{pr}} 
\newcommand{\ev}{\operatorname{ev}} 
\newcommand{\HN}{\operatorname{HN}} 
\newcommand{\Fb}{\mathcal{F}} 
\newcommand{\gr}{\mathfrak{gr}} 
\newcommand{\U}{\mathbb U} 

\newcommand\isomto{\stackrel{\sim}{\smash{\longrightarrow}\rule{0pt}{0.4ex}}}

\def\dar[#1]{\ar@<2pt>[#1]\ar@<-2pt>[#1]}
\entrymodifiers={!!<0pt,0.7ex>+}

\newcommand{\eq}[1][r]
   {\ar@<-3pt>@{-}[#1]
    \ar@<-1pt>@{}[#1]|<{}="gauche"
    \ar@<+0pt>@{}[#1]|-{}="milieu"
    \ar@<+1pt>@{}[#1]|>{}="droite"
    \ar@/^2pt/@{-}"gauche";"milieu"
    \ar@/_2pt/@{-}"milieu";"droite"}
\everymath{\displaystyle\everymath{}}

\theoremstyle{plain}
\newtheorem{thm}{Theorem}[section]

\newtheorem{lem}[thm]{Lemma}
\newtheorem{prop}[thm]{Proposition}
\newtheorem{cor}{Corollary}[thm]
\newtheorem{ex}[thm]{Example}

\theoremstyle{definition}
\newtheorem{mydef}[thm]{Definition}

\theoremstyle{remark}
\newtheorem{rem}[thm]{Remark}

\DeclareSymbolFontAlphabet{\mathbb}{AMSb}
\DeclareSymbolFontAlphabet{\mathbbl}{bbold}

\title{Stratification of the Helffer-Nourrigat cone}
\author{Clement Cren}

\begin{document}

\begin{abstract}
    Given a singular filtration on a manifold, e.g. a subriemannian setting, one can understand the elliptic regularity problems through a special kind of calculus. The principal symbol in this calculus involves the unitary representations of a family of graded nilpotent groups. Not all the irreducible representations of these groups have to be taken into account however, the ones that should be considered form the Helffer-Nourrigat cone. This space thus plays the role of a phase space in subriemannian geometry. Its topology is however very singular, preventing any kind of geometry on it. We propose a way to desingularize it.
    The unitary spectrum of a nilpotent group can be stratified into strata that are locally compact Hausdorff, following Puckansky and Pedersen. We show how this stratification extends to the whole Helffer-Nourrigat cone. As a byproduct, we show that the \(C^*\)-algebra of principal symbols and the one of pseudodifferential operators of order 0 are solvable with explicit subquotients.
\end{abstract}

\maketitle

\section{Introduction}

Let \(X_1,\cdots,X_m\) be vector fields on a manifold \(M\) that are Lie bracket generating. This means that for any \(x \in M\), the linear span of vectors of the form \(X_I(x), I\subset \{1;\cdots;m\}^{\N}\) is equal to \(T_xM\), where 
\[X_{i_1,\cdots,i_N} := [X_{i_1},[\cdots,[X_{i_{n-1}},X_{i_N}]\cdots].\] 
This condition, known as Hörmander condition, appeared in \cite{Hörmander}, where it was proved to be equivalent to the hypoellipticity of the sublaplacian \(\sum_{i=1}^m X_i^2\). A differential operator \(P\) is hypoelliptic if for any distribution \(u \in \CCC^{-\infty}(M)\) such that \(Pu\) is smooth then \(u\) is also smooth.
Given such vector fields, a natural question is how to determine the hypoellipticity of a polynomial in the vector fields \(X_1,\cdots,X_m\), beyond the sublaplacian (i.e. a sum of squares). A necessary and sufficient condition for maximal hypoellipticity (a stronger version of hypoellipticity mimicking the usual elliptic estimates) was conjectured in \cite{HelfferNourrigatPaper} and solved completely in the recent \cite{AMYMaximalHypoellipticity}, see \cite{HelfferNourrigatBook,DebordBourbaki} for a more complete overview and history of the problem.

To solve this conjecture, the authors of \cite{AMYMaximalHypoellipticity} develop a pseudodifferential calculus, and show that the invertibility criterion of the principal symbol (i.e. the ellipticity criterion in this calculus) is exactly what was conjectured by Helffer and Nourrigat. The idea of this new calculus is to consider the vector fields \(X_{i_1,\cdots,i_N}\) as having order \(N\). More precisely, a differential operator that can be written as a \(\CCC^{\infty}(M)\)-linear combination of the vector fields \(X_{I}, |I|\leq N\) has order at most \(N\), its order being the minimal such natural number \(N\). Because of the Hörmander condition, every vector field can be written in terms of the vector fields \(X_I\), so every differential operator has a well defined order. In order to define a principal symbol at a point \(x\in M\), the idea is to freeze the coefficients at \(x\) and look at a model operator. In the usual calculus, this operator is a differential operator with constant coefficients on the tangent space \(T_xM\). Here, because of the inhomogeneity in the directions, the model operator becomes a differential operator on a graded nilpotent group arising naturally from the anisotropy. In such a general setting, these nilpotent groups can have a dimension bigger than the dimension of \(M\), they will typically be universal nilpotent groups following the methods of \cite{RothschildStein}. To understand the model operator, one uses its image in the unitary representations of the group. Not all these representations make sense for our initial operator on \(M\), giving rise to the Helffer-Nourrigat cone. This cone therefore plays the role of a phase space for the calculus at hand as it replaces the cotangent bundle. Indeed in this sub-elliptic setting, certain phenomena such as observability and controlability (for associated wave or Schrödinger operators) are not captured by the classical phase space, but by this singular "non-commutative" phase space, see e.g. \cite{Letrouit,FermanianLetrouit} 

The unitary representations of a nilpotent group form a topological space with the Fell topology. It was proved by Kirillov \cite{Kirillov} that this space is homeomorphic to the space of co-adjoint orbits, i.e. the dual of the Lie algebra, quotiented by the co-adjoint action and endowed with the quotient topology. As such, this space can be quite pathological, usually being not Hausdorff. We can however stratify this space into some nice parts following Puckansky and Pedersen. On these parts the representations can be continuously parameterized, in particular the corresponding subquotient of the group \(C^*\)-algebra is an algebra of continuous functions (with values in compact operators). In the case of graded groups, this stratification can be also applied to the spectrum of the algebra of principal symbols on the group. It was showed in \cite{CrenFilteredCrossedProduct} that one can do this stratification over the whole manifold \(M\) when the filtration is equiregular, see Section \ref{Section:PedersenStratification} below. The strata appearing in the Pedersen decomposition of the space of representations of a nilpotent group can be identified to semi-algebraic sets in the vector space \(\g^*\). Therefore one can hope to get a similar stratification for the Helffer-Nourrigat cone and use it to adapt geometric constructions used in the classical setting. For instance, can the notion of hamiltonian dynamics on the cotangent bundle \(T^*M\) make sense on the Helffer-Nourrigat cone ? If so, one can hope to understand questions such as propagation of singularities, of semiclassical measures, define a geometric control condition, study quantum ergodicity (see e.g. \cite{benedetto})...
We wish here to extend this stratification in a general setting. We prove the following in Section \ref{Section : Stratification HN Cone}:

\begin{thm}[Stratification of the Helffer-Nourrigat cone]\label{Theorem:Stratification}
    Let \(\Fb\) be a finitely generated singular Lie filtration on a manifold \(M\). There exists a filtration by open sets
    \[\emptyset = U_0 \subset U_1 \subset \cdots \subset U_d = \HN(\Fb),\]
    such that all the spaces \(\Omega_j = U_j\setminus U_{j-1}\) are locally compact Hausdorff. 
    
    We also have \(\HN_0(\Fb) =  \bigsqcup_{j=1}^{d-1} \Omega_i \sqcup (\Omega_d \setminus M)\).
    The action of \(\R^*_+\) on \(\HN(\Fb)\) induced by the inhomogeneous dilation on \(\gr(\Fb)\) preserves these strata. The spaces \(\faktor{\Omega_i}{\R^*_+}, 1\leq j < d\) and \(\faktor{(\Omega_d\setminus M)}{\R^*_+}\) are locally compact Hausdorff as well.

    Finally, for \(x\in M\), denote by \(\Omega_{j,x} = \Omega_j\cap \HN(\Fb)_{x}\). Then if \(x \in M_{reg}\), the non-empty sets in the collection \((\Omega_{j,x})_{1\leq j \leq d}\) form a Pedersen stratification of the group \(\gr(\Fb)_x\). If \(x \in M \setminus M_{reg}\), the sets \(\Omega_{j,x}\) are closed subsets of the Pedersen strata of some Pedersen stratification of \(\gr(\Fb)_x\).
\end{thm}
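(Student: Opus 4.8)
The plan is to reduce the whole statement to the Pedersen stratification of a single fixed graded nilpotent Lie algebra, by realising \(\HN(\Fb)\) as a closed, invariant subspace of a trivial bundle over \(M\) whose fibre is that algebra's unitary dual. Since \(\Fb\) is finitely generated, the Rothschild--Stein step of the calculus of \cite{AMYMaximalHypoellipticity} furnishes a fixed graded nilpotent Lie algebra \(\g\) (a free nilpotent algebra of the relevant step on finitely many generators) and, for each \(x\in M\), a graded surjection \(\g\surj\gr(\Fb)_x\) with kernel a graded ideal \(\mathfrak k_x\). Dualising embeds \(\gr(\Fb)_x^*\) as the graded subspace \(\mathfrak k_x^\perp\subseteq\g^*\); since \(\mathfrak k_x\) is an ideal, the coadjoint action of \(\gr(\Fb)_x\) on \(\mathfrak k_x^\perp\) is the restriction of that of \(G:=\exp\g\) on \(\g^*\), so by Kirillov's theorem \(\widehat{\gr(\Fb)_x}\) is identified with the \(G\)-saturated subset \(\mathfrak k_x^\perp/G\subseteq\g^*/G\). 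Thus \(\HN(\Fb)\) embeds into \(M\times(\g^*/G)\); let \(\widetilde\HN(\Fb)\subseteq M\times\g^*\) be its preimage. The two inputs I would take from the earlier sections and the general theory of the cone are: \emph{(i)} \(\widetilde\HN(\Fb)\) is \emph{closed} in \(M\times\g^*\), is \(G\)-saturated, and is invariant under the linear \(\R^*_+\)-action \(\delta^*_t\) on \(\g^*\) dual to the inhomogeneous dilations; and \emph{(ii)} the zero section \(M\times\{0\}\) lies in \(\widetilde\HN(\Fb)\) (the trivial representation always lies in the cone).

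Next I fix once and for all a graded Jordan--Hölder flag of ideals \(0=\g^0\subset\cdots\subset\g^n=\g\) and the resulting Pedersen decomposition of \(\g^*\): a finite chain of Zariski-open, \(G\)- and \(\R^*_+\)-invariant sets \(\emptyset=\widetilde U_0\subset\cdots\subset\widetilde U_d=\g^*\) whose differences \(\widetilde\Omega^G_j:=\widetilde U_j\setminus\widetilde U_{j-1}\) are unions of jump-index layers, grouped (after Pedersen) so that each \(\widetilde\Omega^G_j/G\) is Hausdorff and so that the empty-jump layer \(\{0\}\) sits in \(\widetilde\Omega^G_d\). Running the construction in homogeneous coordinates, one may take the Pukanszky cross-section of \(\widetilde\Omega^G_j\) to be a graded linear subspace \(\Sigma_j=(\Span(X_i:i\in e_j))^\perp\), hence \(\delta^*_t\)-stable, with the orbit map realised by a continuous semi-algebraic retraction \(\rho_j\colon\widetilde\Omega^G_j\to\Sigma_j\cap\widetilde\Omega^G_j\) onto a locally compact Hausdorff semi-algebraic set. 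I then put \(U_j:=(\widetilde\HN(\Fb)\cap(M\times\widetilde U_j))/G\), an open nested family with \(U_0=\emptyset\), \(U_d=\HN(\Fb)\) and \(\Omega_j=(\widetilde\HN(\Fb)\cap(M\times\widetilde\Omega^G_j))/G\). Now \(\widetilde\Omega^G_j\) is closed in the open set \(\widetilde U_j\), so \(\widetilde\HN(\Fb)\cap(M\times\widetilde\Omega^G_j)\) is a closed, \(G\)-saturated subset of \(M\times\widetilde\Omega^G_j\); since \(\mathrm{id}\times\rho_j\) is a quotient map (a continuous surjection with a continuous section is a quotient map, and the product of a quotient map with \(\mathrm{id}_M\) is again one, \(M\) being locally compact Hausdorff), its restriction to this saturated closed set is a quotient map onto \(\Omega_j\), exhibiting \(\Omega_j\) as a \emph{closed} subspace of the locally compact Hausdorff space \(M\times(\Sigma_j\cap\widetilde\Omega^G_j)\), hence itself locally compact Hausdorff. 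The formula for \(\HN_0(\Fb)\) is then immediate: the zero section \(M\) meets only the stratum containing \(\{0\}\), i.e.\ \(M\subseteq\Omega_d\) and \(M\cap\Omega_j=\emptyset\) for \(j<d\).

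For the \(\R^*_+\)-quotients I would use the conical structure. Fix a \(\delta^*_t\)-homogeneous norm \(N\) on \(\g^*\). For \(1\le j<d\) one has \(0\notin\widetilde\Omega^G_j\), and on the cross-section model the \(\R^*_+\)-action is the honest linear dilation, so \(N\) restricts there to a continuous, strictly positive, \(\R^*_+\)-equivariant function; hence \(y\mapsto(\delta^*_{N(y)^{-1}}y,\,N(y))\) is an \(\R^*_+\)-equivariant homeomorphism \(\Omega_j\isom(\Omega_j\cap N^{-1}(1))\times\R^*_+\), so \(\faktor{\Omega_j}{\R^*_+}\) is homeomorphic to \(\Omega_j\cap N^{-1}(1)\), a closed subspace of \(\Omega_j\) and thus locally compact Hausdorff. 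The same argument, applied after deleting the zero section (on which \(N=0\)), handles \(\faktor{(\Omega_d\setminus M)}{\R^*_+}\).

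Finally, for the fibrewise statement the decisive observation is that every \(\ell\in\mathfrak k_x^\perp\) satisfies \(\mathfrak k_x\subseteq\g(\ell):=\{X:\ell([X,\g])=0\}\), since \([\mathfrak k_x,\g]\subseteq\mathfrak k_x\subseteq\ker\ell\); a short check then shows that the ambient jump set of \(\ell\) coincides with its jump set for the flag of \(\gr(\Fb)_x\) induced by \(0=\g^0\subset\cdots\subset\g^n=\g\). Hence the ambient jump-index layers restrict on \(\mathfrak k_x^\perp/G\) to those of \(\gr(\Fb)_x\), and the inherited grouping — Hausdorff because it is a subspace of the Hausdorff \(\widetilde\Omega^G_j/G\) — is a Pedersen stratification of \(\gr(\Fb)_x\), with strata \(\Omega^{\mathrm{ind}}_{j,x}\); moreover \(\Omega_{j,x}=(\widetilde\HN(\Fb)_x/G)\cap\Omega^{\mathrm{ind}}_{j,x}\) is closed in \(\Omega^{\mathrm{ind}}_{j,x}\) because \(\widetilde\HN(\Fb)_x\) is closed and \(G\)-invariant in \(\g^*\). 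When \(x\in M_{reg}\), equiregularity near \(x\) (cf.\ Section~\ref{Section:PedersenStratification} and \cite{CrenFilteredCrossedProduct}) gives \(\widetilde\HN(\Fb)_x/G=\mathfrak k_x^\perp/G\), so the non-empty \(\Omega_{j,x}\) are exactly the strata \(\Omega^{\mathrm{ind}}_{j,x}\); for general \(x\) they are closed subsets of them. The step I expect to be genuinely hard is input \emph{(i)} — the closedness of \(\widetilde\HN(\Fb)\) in \(M\times\g^*\), equivalently the identification of the topology of \(\HN(\Fb)\) inside \(M\times(\g^*/G)\) — since it is precisely what upgrades ``Hausdorff'' to ``locally compact Hausdorff'' on every stratum; I would derive it from the description of the cone through the deformation to the normal cone \(\DNC(\Fb)\) together with the closedness and properness of the coadjoint action on each Pedersen layer of \(\g^*\). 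The subsidiary claim that the Pukanszky cross-sections may be taken to be graded linear subspaces should follow from running Pedersen's construction with a homogeneous Jordan--Hölder basis.
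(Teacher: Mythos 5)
Your proposal follows essentially the same route as the paper: pass to the fixed Rothschild--Stein group \(\g\), take a Pedersen stratification of \(\widehat{\g}\) with a grading-compatible Jordan--Hölder basis, intersect the strata with \(\HN(\Fb)\subset M\times\widehat{\g}\), and control the fibrewise picture by showing jump indices of a surjection \(\g\surj\gr(\Fb)_x\) are compatible (the paper's lemma with the maps \(u\) and \(\sigma\) is exactly your "short check", made precise by extracting the Jordan--Hölder basis \(\varphi(X_{i_1}),\dots,\varphi(X_{i_m})\) of the quotient). The one step you flag as potentially hard — closedness of the (lifted) cone in \(M\times\g^*\) — is not an extra input to be derived from the DNC groupoid: it is built into the definition, since \(\T^*\F\) is by construction a closed subset of the locally compact Hausdorff space \(\adiab\F^*\) (the closure of \(T^*M\times\R^*_+\) intersected with the fibre \(t=0\)), and the embedding via \(\gr(\beta)^*\) is a homeomorphism onto its image; with that in hand, the paper's local-compactness argument is more direct than your cross-section route (closed subset of the locally compact Hausdorff \(M\times\Omega_j^{\g}\), the latter Hausdorff already by Pedersen), though your version is also valid.
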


We can then use this result on the algebra of principal symbols of \cite{AMYMaximalHypoellipticity}. Let \(\mathcal{F}^1 = \left\langle X_1,\cdots,X_m\right\rangle\) and \(\mathcal{F}^{j+1} = \mathcal{F}^j + [\mathcal{F}^j,\mathcal{F}^1], j\geq 1\). We see that they are (locally) finitely generated sub-modules of vector fields and satisfy the condition:
\[\forall i,j\geq 1, [\F^i,\F^j]\subset \F^{i+j}.\]
More generally, we call a collection of sub-modules of vector fields with these properties a singular Lie filtration on \(M\). Following \cite{AMYMaximalHypoellipticity}, one can create spaces of pseudodifferential operators \(\Psi_{\Fb}^k(M), k \in \Z\). For \(k = 0\), these operators act continuously on \(L^2(M)\) and one can consider their closure \(\Psi_{\Fb}(M)\). They obtain the exact sequence:
\[\xymatrix{0 \ar[r] & \mathcal{K}(L^2(M)) \ar[r] & \Psi_{\Fb}(M) \ar[r] & \Sigma_{\Fb}(M) \ar[r] & 0.}\]

The algebra \(\Sigma_{\Fb}(M)\) is the corresponding \(C^*\) algebra of principal symbols.

\begin{thm}\label{Theorem:Solvability}
    Let \(\Fb\) be a singular Lie filtration on a manifold \(M\). Then the algebra of principal symbols \(\Sigma_{\Fb}(M)\) and the algebra of  pseudodifferential operators of order \(0\) \(\overline{\Psi}_{\Fb}(M)\) are solvable. 
\end{thm}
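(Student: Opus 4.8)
The plan is to derive Theorem~\ref{Theorem:Solvability} from Theorem~\ref{Theorem:Stratification} via the correspondence between open subsets of the primitive ideal space of a $C^*$-algebra and its closed two-sided ideals. I use the following notion: a $C^*$-algebra is \emph{solvable} (of length $n$) if it admits a finite composition series $0 = J_0 \subset J_1 \subset \cdots \subset J_n = A$ by closed two-sided ideals whose successive subquotients $J_j/J_{j-1}$ have continuous trace --- equivalently, are the $C^*$-algebra of $C_0$-sections of a continuous field of elementary $C^*$-algebras over a locally compact Hausdorff space. It suffices to treat $\Sigma_{\Fb}(M)$: given a composition series of length $d$ for $\Sigma_{\Fb}(M)$, pulling it back along the surjection in the exact sequence $0 \to \mathcal{K}(L^2(M)) \to \overline{\Psi}_{\Fb}(M) \to \Sigma_{\Fb}(M) \to 0$ and prepending the elementary ideal $\mathcal{K}(L^2(M))$ produces a composition series of length $d+1$ for $\overline{\Psi}_{\Fb}(M)$.

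\emph{Step 1 (the ideal filtration).} The calculus of \cite{AMYMaximalHypoellipticity} identifies the spectrum of $\Sigma_{\Fb}(M)$ with the projectivized cone $\HN_0(\Fb)/\R^*_+$ (the Helffer--Nourrigat cone with its zero section $M$ removed, quotiented by the graded dilations --- the precise analogue of the cosphere bundle $S^*M$ in the elliptic calculus). Moreover $\Sigma_{\Fb}(M)$ is liminal, because its irreducible representations are the principal symbols evaluated at a point $x$ and a representation $\pi \in \HN(\Fb)_x$, which act by compact operators on the representation space of the nilpotent group $\gr(\Fb)_x$; hence its spectrum is homeomorphic to its primitive ideal space and open subsets of it correspond bijectively to closed two-sided ideals. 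By Theorem~\ref{Theorem:Stratification} the open filtration $\emptyset = U_0 \subset \cdots \subset U_d = \HN(\Fb)$ is invariant under the $\R^*_+$-action and, since $M \subset \Omega_d$, satisfies $U_{d-1} \subset \HN_0(\Fb)$. Setting $V_j := (U_j \cap \HN_0(\Fb))/\R^*_+$ we obtain an increasing chain of open subsets $\emptyset = V_0 \subset V_1 \subset \cdots \subset V_d$ of the spectrum, with $V_j \setminus V_{j-1} = \Omega_j/\R^*_+$ for $j < d$ and $V_d \setminus V_{d-1} = (\Omega_d \setminus M)/\R^*_+$, each locally compact Hausdorff by Theorem~\ref{Theorem:Stratification}. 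The corresponding closed ideals $0 = J_0 \subset J_1 \subset \cdots \subset J_d = \Sigma_{\Fb}(M)$ then have $\widehat{J_j/J_{j-1}}$ locally compact Hausdorff.

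\emph{Step 2 (continuous-trace subquotients, and the main obstacle).} It remains to upgrade each $J_j/J_{j-1}$ from ``Hausdorff spectrum'' to ``continuous trace'', that is, to realize it as the $C_0$-sections of a continuous field of elementary $C^*$-algebras over $\Omega_j/\R^*_+$ (resp.\ over $(\Omega_d \setminus M)/\R^*_+$). Here is where the finer, fibrewise part of Theorem~\ref{Theorem:Stratification} enters: for fixed $x$, the image of $\Omega_{j,x}$ in $\widehat{\gr(\Fb)_x}$ is a closed subset of a Pedersen stratum, and over a Pedersen stratum Pedersen's cross-section theorem provides a continuous field of irreducible representations together with cross-sections given by polynomial formulas in linear coordinates on $\gr(\Fb)_x^*$; this identifies the fibrewise subquotient of the symbol algebra with sections of a continuous field of compact operators over $\Omega_{j,x}$. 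The substantive step is to glue these fibrewise identifications into a single continuous field over $\Omega_j/\R^*_+$, i.e.\ to verify Fell's condition uniformly in the base point: near each point of $\Omega_j/\R^*_+$ one must produce an abelian element of $J_j/J_{j-1}$ whose image is a rank-one projection throughout a neighbourhood. I expect this to be the main difficulty, for two reasons. First, the fibrewise Pedersen cross-sections must be shown to vary continuously as $x$ runs over $M$, including across the locus $M \setminus M_{reg}$ where $\Omega_{j,x}$ is only a closed slice of a Pedersen stratum and the combinatorial type of the fibrewise Pedersen stratification degenerates; the uniform polynomial nature of the cross-sections constructed in the proof of Theorem~\ref{Theorem:Stratification}, together with the continuity of the field $x \mapsto \gr(\Fb)_x$ of graded nilpotent groups, is what should make this possible. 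Second, after quotienting by $\R^*_+$ on a ``sphere'' the field will in general not be globally trivial, so one genuinely works with continuous fields and checks local triviality by hand. Once this is done, $\Sigma_{\Fb}(M)$ is solvable of length at most $d$, and $\overline{\Psi}_{\Fb}(M)$ of length at most $d+1$, with the announced explicit subquotients.
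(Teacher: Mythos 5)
Your Step~1 is correct and matches the paper's reduction: identify the spectrum of \(\Sigma_{\Fb}(M)\) with \(\faktor{\HN_0(\Fb)}{\R^*_+}\), let the open sets \(U_j\cap\HN_0(\Fb)\) (\(\R^*_+\)-invariant by Theorem~\ref{Theorem:Stratification}) define a nested chain of ideals \(\Sigma_j\), and note that the remaining task is to identify the subquotients. Prepending \(\mathcal{K}(L^2(M))\) to pass from \(\Sigma_{\Fb}(M)\) to \(\overline{\Psi}_{\Fb}(M)\) is also exactly what the paper does.

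The gap is in Step~2, and you flag it yourself: you reduce to showing that \(J_j/J_{j-1}\) is (continuous-trace, or better, a trivial field) \(\CCC_0\left(\faktor{\Omega_j}{\R^*_+},\mathcal{K}_j\right)\), but you do not actually establish this. The route you sketch --- glue fibrewise Pedersen cross-sections as \(x\) varies over \(M\), verify Fell's condition by hand, worry about degeneration of the fibrewise combinatorics across \(M\setminus M_{\mathrm{reg}}\), accept nontrivial continuous fields in general --- is genuinely the hard road, and in fact it is unnecessary. The paper avoids the gluing problem entirely by exploiting the Rothschild--Stein reduction. Since the filtration has a global graded basis \(\g\), the algebra \(\Sigma_{\Fb}(M)\) is by construction a \emph{quotient} of \(\Sigma(M\times\g)\cong\CCC_0(M)\otimes\Sigma(\g)\), and the Pedersen strata used in Theorem~\ref{Theorem:Stratification} are precisely pullbacks of the Pedersen strata of the single fixed group \(\g\). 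The subquotient \(\faktor{\widetilde\Sigma_j}{\widetilde\Sigma_{j-1}}\) of \(\Sigma(M\times\g)\) corresponding to a Pedersen stratum of \(\g\) is already a \emph{trivial} field \(\CCC_0\left(M\times\faktor{\widetilde\Omega_j}{\R^*_+},\mathcal{K}_j\right)\); this is the Lipsman--Rosenberg/Pedersen input, applied to the one group \(\g\) and tensored by \(\CCC_0(M)\), and is taken from \cite[Theorem~5.16]{CrenFilteredCrossedProduct}. The quotient map \(\Sigma(M\times\g)\twoheadrightarrow\Sigma_{\Fb}(M)\) sends \(\widetilde\Sigma_j\) onto \(\Sigma_j\), so each \(\faktor{\Sigma_j}{\Sigma_{j-1}}\) is a quotient of \(\CCC_0\left(M\times\faktor{\widetilde\Omega_j}{\R^*_+},\mathcal{K}_j\right)\). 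A quotient of \(\CCC_0(X,\mathcal{K})\) is automatically \(\CCC_0(Y,\mathcal{K})\) for the closed subset \(Y\subset X\) given by the spectrum; there is nothing to glue and no Dixmier--Douady obstruction to consider, because triviality over \(X\) is inherited by closed \(Y\). Thus the ``main difficulty'' you isolate disappears once one works at the level of the universal group rather than fibre by fibre over \(M\). Your proposal as written does not close this step and so is not a complete proof, though the reduction it sets up is the right one.
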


Theorem \ref{Theorem:Solvability} is a direct consequence of Theorem \ref{Theorem:Stratification} and of the following result:

\begin{thm}\label{Theorem:SpectrumSymbols}
    Let \(\Fb\) be a singular Lie filtration on a manifold \(M\). Then the spectrum of the algebra of principal symbols \(\Sigma_{\Fb}(M)\) is naturally homeomorphic to \(\faktor{\HN_0(\F)}{\R^*_+}\).
\end{thm}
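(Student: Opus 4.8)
The plan is to identify the spectrum of $\Sigma_\Fb(M)$ with a space of irreducible representations, and then to match those representations with points of $\HN_0(\F)$ modulo dilation. First I would recall the structure of $\Sigma_\Fb(M)$ from \cite{AMYMaximalHypoellipticity}: a principal symbol is a field over $M$ assigning to each $x$ the image of the model operator in the representations of $\gr(\Fb)_x$, with a continuity/decay condition along the fibers of the bundle of osculating groups. Concretely, $\Sigma_\Fb(M)$ sits inside the $C^*$-algebra of the bundle of group $C^*$-algebras $C^*(\gr(\Fb))$, and the Helffer–Nourrigat cone $\HN(\Fb)$ is by definition the closed subset of the bundle $\widehat{\gr(\Fb)}$ of irreducible unitary representations (fiberwise the unitary dual) cut out by the condition that $\pi$ extends continuously from the adiabatic/tangent groupoid picture — equivalently, the support of $\Sigma_\Fb(M)$ as a subalgebra of sections. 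So as a first step I would show $\widehat{\Sigma_\Fb(M)}$ is naturally in bijection with $\HN(\Fb)$ minus the trivial (zero) representations, i.e. with $\HN_0(\F)$: an irreducible representation of $\Sigma_\Fb(M)$ factors through evaluation at some $x$ composed with an irreducible representation of the quotient of $C^*(\gr(\Fb)_x)$ by the ideal of symbols vanishing at that point, and by Kirillov theory plus the ellipticity characterization of \cite{AMYMaximalHypoellipticity} these are exactly the nonzero points of $\HN(\Fb)_x$.

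Next I would handle the $\R^*_+$-quotient. The symbols in $\Sigma_\Fb(M)$ are, by construction, homogeneous of degree $0$ — i.e. invariant under the inhomogeneous dilations $\delta_\lambda$ acting on $\gr(\Fb)$ — because one quotients the full symbol algebra by the commutators/lower order terms and takes the degree-zero part. Hence a representation of $\Sigma_\Fb(M)$ cannot distinguish $\pi$ from $\pi\circ\delta_\lambda$: the corresponding states agree on all order-$0$ symbols. This gives that $\widehat{\Sigma_\Fb(M)}$ is in bijection with $\faktor{\HN_0(\F)}{\R^*_+}$ as sets, where freeness of the action on $\HN_0$ (no nonzero representation is dilation-fixed, since a fixed point would force the orbit to be a single character, excluded once we remove $M$, i.e. the trivial representations) makes the quotient well-behaved.

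The remaining, and most delicate, step is matching topologies. On the spectrum side one has the Jacobson (hull–kernel) topology; on the cone side one has the Fell topology on $\widehat{\gr(\Fb)}$ restricted to $\HN_0$ and then pushed to the $\R^*_+$-quotient. I would argue the bijection is a homeomorphism by showing that closed subsets correspond: a subset $Z\subset\widehat{\Sigma_\Fb(M)}$ is closed iff it is the hull of an ideal, and ideals of $\Sigma_\Fb(M)$ that are $C_0(M)$-subalgebras correspond to $\R^*_+$-invariant, fiberwise closed and "saturated" subsets of the bundle of unitary duals — this is essentially the Fell-topology description of the primitive ideal space of a continuous field of $C^*$-algebras, combined with the fact that the primitive ideal space of $C^*$ of a nilpotent group is $\faktor{\g^*}{\mathrm{Ad}^*}$ with its quotient topology (Kirillov). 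The genuinely hard part will be the continuity of the field near the singular set $M\setminus M_{reg}$, where the osculating group $\gr(\Fb)_x$ jumps: one must check that the Fell topology on the total space $\HN_0(\F)$ — which is \emph{a priori} just a closed subset of a possibly badly-behaved bundle — still matches hull–kernel convergence in $\Sigma_\Fb(M)$. Here I would lean on Theorem~\ref{Theorem:Stratification}: the stratification into locally compact Hausdorff pieces $\Omega_j$ (and their $\R^*_+$-quotients) gives enough regularity to run a stratum-by-stratum comparison, using that on each stratum representations are continuously parameterized (Pedersen), so the comparison of topologies reduces to the classical continuous-field situation of \cite{CrenFilteredCrossedProduct}. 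Once closed sets match on each stratum and the strata fit together as an increasing chain of open sets, the global homeomorphism follows.
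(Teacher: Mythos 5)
Your overall strategy --- identify irreducible representations of $\Sigma_{\Fb}(M)$ with points of $\HN_0(\Fb)/\R^*_+$ and then match topologies --- is the right goal, but the route you take is not the paper's, and the step you yourself flag as ``the genuinely hard part'' is a real gap that your proposal does not close.

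The paper does not work directly with the bundle of osculating groups $\gr(\Fb)$ over $M$. Instead it runs everything through the Rothschild--Stein construction (Subsection~\ref{Subsection:Rotschild-Stein}): one chooses a global graded basis, producing a \emph{fixed} graded group $\g$ with a linear map $\beta\colon\g\to\mathfrak{X}(M)$, and one builds the auxiliary $C^*$-algebra $\Sigma(M\times\g)\cong\CCC_0(M)\otimes\Sigma(\g)$ of degree-$0$ symbol multipliers of $C^*_0(\g)$. The spectrum of this algebra is computed in the references \cite[Proposition 5.6]{FermanianFischerGradedLie}, \cite[Theorem 5.8]{CrenFilteredCrossedProduct} to be $M\times\bigl(\widehat{\g}\setminus 0\bigr)/\R^*_+$, with the $\R^*_+$-identification being a unitary equivalence of evaluation representations coming from degree-$0$ homogeneity of the full symbol. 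The algebra $\Sigma_{\Fb}(M)$ is then \emph{defined} (and, via \cite[Theorems 3.34, 3.39]{AMYMaximalHypoellipticity}, identified with the principal symbol algebra of the calculus) as the quotient of $\Sigma(M\times\g)$ whose spectrum is the closed subset $\HN_0(\Fb)/\R^*_+$. The homeomorphism of Theorem~\ref{Theorem:SpectrumSymbols} then comes for free from general $C^*$-theory: a quotient corresponding to a closed subset $Z$ of the spectrum has spectrum $Z$ with the subspace topology. In this approach there is no ``continuous field over $M$ with jumping fibers'' to wrestle with, because everything lives in the uniform ambient space $M\times\widehat{\g}$ attached to the single group $\g$.

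By contrast you propose to view $\Sigma_{\Fb}(M)$ as sections of a bundle of algebras $C^*(\gr(\Fb)_x)$ and compare the Fell topology on $\HN_0(\Fb)$ with the Jacobson topology fiber by fiber. The problem is precisely the one you identify: over $M\setminus M_{reg}$ the osculating group $\gr(\Fb)_x$ jumps in dimension and isomorphism type, and the continuous-field machinery you appeal to (and the reference \cite{CrenFilteredCrossedProduct} you cite) is developed only in the equiregular case where the bundle is genuinely a locally trivial bundle of groups. Invoking Theorem~\ref{Theorem:Stratification} does not close this gap: the stratification describes the \emph{topology} of $\HN_0(\Fb)$ once it is understood as a subset of $M\times\widehat{\g}$, but it does not by itself connect $\HN_0(\Fb)$ to the primitive ideal space of an algebra $\Sigma_{\Fb}(M)$ that you have not yet put in a tractable form. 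Moreover the stratification theorem is itself proved in the paper using the very Rothschild--Stein embedding you are trying to avoid, so the dependency is backwards in spirit. Two smaller imprecisions: the $\R^*_+$-step should be stated as a \emph{unitary equivalence} $\ev_{(x,\pi)}\cong\ev_{(x,\pi\circ\delta_\lambda)}$ rather than equality of states (the verification is not a one-liner from degree-$0$ homogeneity, it is precisely what \cite{CrenFilteredCrossedProduct,FermanianFischerGradedLie} establish in the Fourier multiplier picture); and your argument that no nonzero orbit is dilation-fixed (``would force the orbit to be a single character'') is too quick, since a priori higher-dimensional orbits could be cones --- freeness and properness of the $\R^*_+$-action on $\widehat{\g}\setminus\{0\}$ needs its own argument (e.g.\ via a homogeneous quasi-norm), which the cited references supply. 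To repair your proposal you should pass to the global Lie basis $\g$, use the known spectrum of $\Sigma(M\times\g)$, and realize $\Sigma_{\Fb}(M)$ as the quotient corresponding to the closed set $\HN_0(\Fb)/\R^*_+$, invoking \cite{AMYMaximalHypoellipticity} for the independence of choices; the homeomorphism is then formal.
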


 In the previous theorem, \(\HN(\F)\) is the Helffer-Nourrigat cone and \(\HN_0(\F)\) is the complement of the trivial representations of each fiber. The action of \(\R^*_+\) is obtained from the inhomogeneous dilations of the underlying graded groups and is free and proper on \(\HN_0(\F)\).

\section{The Helffer-Nourrigat cone}

\subsection{Generalities on distributions and singular Lie filtrations}

In this section we describe de Helffer-Nourrigat cone of representations. We describe it in two different ways. The first is intrinsic and obtained from the so called collection of osculating groups over each point of the manifold following \cite{AMYMaximalHypoellipticity}. The second follows the method of Rothschild and Stein \cite{RothschildStein} and describes the cone as a subset of representations of a single graded group (parameterized by the manifold \(M\)). This graded group depends on choices of generators for each module \(\F^j, j\geq 1\) of the filtration. Once these choices are made, they also induce a map from this group to each osculating group. The dual of this map induces a homeomorphism between the two constructions of the Helffer-Nourrigat cone.
\[\]

We start this section by generalities on modules of vector fields. Let \(\mathcal{D}\subset \mathfrak{X}(M)\) be a \(\CCC^{\infty}(M)\)-sub-module, locally finitely generated. For a given \(x \in M\) we denote by \(\mathcal{D}_x := \faktor{\mathcal{D}}{I_x\mathcal{D}}\) the fiber at \(x\), where \(I_x\mathcal{D} = \{ X\in \mathcal{D}, X(x) = 0\}\). This vector space is finite dimensional because \(\mathcal{D}\) is locally finitely generated. The evaluation at \(x\) induces a linear map \(\mathcal{D}_x \to T_xM\). The image of this map is \(D_x = \ev_x(\mathcal{D})\) but the map is not always injective. 

The subset \(M_{\mathcal{D}} \subset M\) of points over which this map is bijective is called the regular part of \(\mathcal{D}\). By \cite[Proposition 1.5]{AndroulidakisSkandalisHolonomy}, the regular part of \(\mathcal{D}\) is an open dense subset of \(M\), the family of \(D_x, x\in M_{\mathcal{D}}\) forms a subbundle of \( TM_{\mathcal{D}}\), and the restriction of \(\mathcal{D}\) to \(M_{|\mathcal{D}}\) is its module of sections. In \cite{AndroulidakisSkandalisHolonomy} this is proved for a singular foliation but the proof of this result only uses the locally finitely generated assumption, so it is also valid in our context.

Finally denote by \(\mathcal{D}^* = \sqcup_{x\in M}\mathcal{D}_x^*\). We define a topology on \(\mathcal{D}^*\) by requiring that the projection on \(M\) is continuous as well as the maps \(\lag \cdot , X \rag, X\in \mathcal{D}\). Following \cite[Proposition 2.10]{AndroulidakisSkandalisPseudoDiff} it is a locally compact Hausdorff space.

\begin{mydef}
    Let \(M\) be a smooth manifold. A singular Lie filtration is a family \(\Fb = (\F^j)_{j\geq 1}\) of locally finitely generated \(\CCC^{\infty}(M)\)-sub-modules of the module of vector fields \(\mathfrak{X}(M)\). They satisfy the condition on the Lie brackets:
    \[\forall i,j\geq 1, \left[ \F^i,\F^j \right] \subset \F^{i+j}.\]
    Such a filtration satisfies the Hörmander condition if \(\F^k = \mathfrak{X}(M)\) for \(k\) big enough.
\end{mydef}

The Hörmander condition is technically stronger than the original one. This condition requires that for every \(x\in M\), \(F^k_x = T_xM\) for \(k\) big enough (that might depend on \(x\)). We can always take a uniform \(k\) locally. Indeed, take \(X_1,\cdots,X_N \in \F^k\) such that their evaluations \(X_1(x),\cdots,X_N(x)\) at \(x\in M\) span \(T_xM\). Then there exists an open neighborhood \(x\in U\subset M\) such that \(X_1,\cdots,X_N\) is a basis of \(\F^k_{|U}\).  In particular if \(M\) is compact then we can take a uniform \(k\) that works for every point. In case where the manifold is non-compact, we want to avoid situations where we need more and more depth in the filtration when going to infinity.

\begin{ex}
    Let \(X_1,\cdots,X_N\) be vector fields on \(M\). Set 
    \[\F^1 = \CCC^{\infty}(M)\lag X_1,\cdots,X_N\rag,\] 
    and define inductively 
    \[\F^{k+1} = \F^k + \left[\F^k,\F^1\right].\] Then \(\F = (\F^k)_{k\geq 1}\) is a singular Lie filtration. The Hörmander condition on the filtration is then equivalent to the one on the vector fields (from the initial work of Hörmander \cite{Hörmander}) when \(M\) is compact (in general the weaker version given above is equivalent to the Hörmander condition on the vector fields).
\end{ex}

\begin{ex}\label{Example : Finitely generated}
    More generally, one can take vector fields \(X_1,\cdots,X_N\) and associate weights \(\nu_1,\cdots,\nu_N \in \N\) to them. Then declare that a vector field \(X_I, I\in \{1,\cdots,N\}^k, k\in \N\) has weight \(\sum_{j\in I}\nu_j\). Consider then \(\F^k\) to be the sub-module of vector fields generated by the ones of weight at most \(k\). The family \(\F = (\F^k)_{k\geq 1}\) forms a singular Lie filtration on \(M\) and the (weaker version of the) Hörmander condition on \(\F\) again corresponds to the one on the vector fields \(X_1,\cdots,X_N\).

    This singular Lie filtration is (globally) finitely generated. Conversely, every every finitely generated singular Lie filtration is of this form. This can be seen by completing taking a basis of the first stratum and iteratively adding vector fields not obtainable by Lie brackets to get a generating set for each \(\F^k, k\geq 1\).
\end{ex}

\subsection{The Helffer-Nourrigat cone: Intrinsic definition}

We now construct the osculating groups associated to a singular Lie filtration \(\Fb\). At a given point \(x\in M\) we define:
\[\gr_x(\Fb):= \bigoplus_{j\geq 1} \frac{\F^j_x}{\F^{j-1}_x} = \bigoplus_{j\geq 1} \frac{\F^j}{\F^{j-1} + I_x\F^j}.\]
In the quotient \(\frac{\F^j_x}{\F^{j-1}_x}\), the map \(\F^{j-1}_x \to \F^j_x\) comes from the inclusion between the modules but is not necessarily injective between the fibers over \(x\). Therefore, the dimension of \(\gr_x(\Fb)\) can be greater than the dimension of \(M\).
Given \(i,j\geq 1, x\in M\) and sections \(X\in \F^i, Y\in \F^j\), the value of \([X,Y](x) \mod \F_x^{i+j-1}\) only depends on \(X(x) \mod \F_x^{i-1}\) and \(Y(x) \mod \F_x^{j-1}\). This defines a Lie bracket on \(\gr_x(\Fb)\), giving it the structure of a graded nilpotent Lie algebra. Using the Baker-Campbell-Hausdorff formula, this defines a connected, simply connected, graded Lie group. We can therefore look at the unitary representations of this group, that we understand through Kirillov's orbit method \cite{Kirillov}:
\[\widehat{\gr_x(\Fb)} \cong \faktor{\gr_x(\Fb)^*}{\Ad^*(\gr_x(\Fb))}.\]
To define the Helffer-Nourrigat cone, we need the adiabatic foliation 
\[\adiab\F := \sum_{j\geq 1} t^j\widetilde{\F}^j \subset \mathfrak{X}(M\times \R_+).\]
Here, \(t\) denotes the projection onto the second factor and \(\widetilde{\F}^j\) is the module of vector fields on \(M\times \R_+\) generated by the \((X,0), X \in \F^j\). By the properties of a singular Lie filtration, this defines a singular foliation on \(M \times \R_+\). For \(x\in M\), the point \((x,0)\) is stationary for \(\adiab\F\), therefore the fiber \(\adiab\F_{(x,0)}\) is a nilpotent Lie algebra. It is shown in \cite[Proposition 1.6]{AMYMaximalHypoellipticity} that this Lie algebra is naturally isomorphic to \(\gr_x(\Fb)\). For a \(t \neq 0\) we have \(\adiab\F_{(x,t)} = T_xM\) because of Hörmander condition.

\begin{rem}
    A Lie filtration is called regular if all the \(\F^j\) correspond to the sections of subbundles \(H^j \subset TM\). With a regular Lie filtration, the osculating groups all have the same dimension as \(M\) and form a Lie groupoid over \(M\). With a singular Lie filtration, we can consider the open dense subset \(\cap_{j\geq 1} M_{\F^j}\) formed by the intersection of the respective regular sets. On this subset, the singular Lie filtration is a regular Lie filtration.
\end{rem}

Now to define the Helffer-Nourrigat cone, consider the locally compact space \(\adiab\F^*\). We define:
\[\T^*_x\F := \overline{T^*M\times \R^*_+} \cap \adiab\F^*_{(x,0)}\]
and identify it to a subset of \(\gr_x(\Fb)^*\). By \cite[Proposition 1.12]{AMYMaximalHypoellipticity} this set is stable under the coadjoint action so we can consider the corresponding subset of representations through Kirillov theory.

\begin{mydef}\label{Def: ConeHF Intrinsic}
    The Helffer-Nourrigat cone is the set
    \[\HN(\Fb) := \faktor{\T^*\F}{\Ad^*(\gr(\Fb))} \subset \widehat{\gr(\Fb)}.\]
    We also define its subset:
    \[\HN_0(\Fb) := \faktor{\T^*\F\setminus (M\times \{0\})}{\Ad^*(\gr(\Fb))},\]
    which corresponds to removing the trivial representation of each osculating group from the Helffer-Nourrigat cone.
\end{mydef}

The definition of \(\T^*\F\) (and thus of \(\HN(\Fb)\)) is invariant if we rescale the \(\R^*_+\) variable before the limit. Therefore we get a \(\R^*_+\)-action on \(\gr(\Fb)\) such that \(\HN(\Fb)\) is stable under the dual action. This action can be understood as follows. For a given point \(x\in M\), the group \(\gr(\Fb)_x\) is a graded Lie group/algebra. This means that, as a Lie algebra, it decomposes as a direct sum of subspaces:
\[\gr(\Fb)_x = \bigoplus_{j = 1}^N \gr(\Fb)_{j,x},\]
with \(\left[\gr(\Fb)_{j,x},\gr(\Fb)_{k,x}\right] \subset \gr(\Fb)_{j+k,x}, j,k\geq 1.\)
In the previous decomposition, \(N\) denotes the depth of the filtration, i.e. the smallest number satisfying \(\F^N = \mathfrak{X}(M)\). We choose to get a uniform depth for all the osculating Lie algebras but some of the subspaces \(\gr(\Fb)_{j,x}\) could be zero.

\begin{rem}\label{Remark : One cannot use all the representations}
    Consider the simple case where \(M = \R\) and define a filtration of depth 3 by defining \(\F^1,\F^2,\F^3\) as generated by \(x^2\partial_x, x\partial_x, \partial_x\) respectively. The singular part is \(\{0\}\). The osculating group at that point is the abelian group \(\R^3\) with each component having grading 1,2 and 3 (generated by the respective equivalence classes of \(x^2\partial_x, x\partial_x, \partial_x\)).
    
    We get a differential operator of order 4 by setting \(D= (x^2\partial_x)\partial_x - (x\partial_x)^2\). One can then "freeze the coefficients" at \(x = 0\) and take a Fourier transform to obtain the function \(\xi \mapsto \xi_1\xi_3 - \xi_2^2\). It turns out however after a simple computation that \(D = -x\partial_x\) and has thus order 2. Its principal symbol, as an operator of order 4, would then have to vanish. Therefore if we want a reasonable notion of principal symbol, we can only take representations (so evaluate the Fourier transform at points) corresponding to \(\xi \in \R^3\) with \(\xi_1\xi_3 - \xi_2^2\). This turns out to be the Helffer-Nourrigat cone (at \(x=0\)) in that particular case.
\end{rem}

In such a group, there is a natural \(\R^*_+\)-action \(\delta_{\lambda}, \lambda>0\) given by multiplication by \(\lambda^j\) on \(\gr(\Fb)_{j,x}\). This is compatible with the Lie bracket and is thus a Lie group/algebra automorphism. One can check that this action is induced from the zoom action defined on \(M \times \R_+\) by \(\alpha_{\lambda}(x,t) = (x,\lambda^{-1}t)\), which preserves the adiabatic foliation.

Since the definition of \(\T^*\F\) is invariant under the zoom action, we get:

\begin{prop}
    The set \(\T^*\F\) is invariant under \(\delta^*_{\lambda}, \lambda>0\). Consequently so are the sets \(\HN(\Fb), \HN_0(\Fb)\). The action on the quotient is still denoted by \(\delta\).
\end{prop}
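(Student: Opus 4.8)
The plan is to make explicit the assertion, already recorded above, that the \(\R^*_+\)-action on \(\gr(\Fb)\) by the dilations \(\delta_\lambda\) is the one induced by the zoom action \(\alpha_\lambda(x,t)=(x,\lambda^{-1}t)\) on \(M\times\R_+\), and that the latter preserves \(\adiab\F\). First I would record the one-line computation on generators: for \(X\in\F^j\) one has \((\alpha_\lambda)_*\big(t^j(X,0)\big)=\lambda^j\,t^j(X,0)\), so \((\alpha_\lambda)_*\adiab\F=\adiab\F\) and \(\alpha_\lambda\) is an automorphism of the module \(\adiab\F\) covering the base diffeomorphism \(\alpha_\lambda\). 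Any such automorphism induces a homeomorphism, still denoted \(\alpha_\lambda\), of the locally compact space \(\adiab\F^*\), acting over each point by the transpose of the linear isomorphism \(\adiab\F_{(x,t)}\isom\adiab\F_{\alpha_\lambda(x,t)}\).

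Next I would check separately that the two ingredients of \(\T^*_x\F=\overline{T^*M\times\R^*_+}\cap\adiab\F^*_{(x,0)}\) are \(\alpha_\lambda\)-invariant. Since \(\alpha_\lambda\) fixes each \((x,0)\), the homeomorphism \(\alpha_\lambda\) of \(\adiab\F^*\) restricts to each fiber \(\adiab\F^*_{(x,0)}\cong\gr_x(\Fb)^*\), and by the generator computation its action there is \(\delta_\lambda^*\), since on the \(j\)-th graded component \(\F^j_x/\F^{j-1}_x\) of \(\gr_x(\Fb)\) the linearization of \(\alpha_\lambda\) is multiplication by \(\lambda^j\); this is precisely the identification \(\adiab\F_{(x,0)}\cong\gr_x(\Fb)\) of \cite[Proposition 1.6]{AMYMaximalHypoellipticity} composed with the earlier description of \(\delta_\lambda\). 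On the other hand, over \(t\neq0\) the fiber \(\adiab\F^*_{(x,t)}\) is canonically \(T^*_xM\) and \(\alpha_\lambda\) only rescales the \(\R^*_+\) coordinate, so \(\alpha_\lambda\) maps \(T^*M\times\R^*_+\) onto itself and hence maps its closure \(\overline{T^*M\times\R^*_+}\subset\adiab\F^*\) onto itself. Intersecting the \(\alpha_\lambda\)-invariant fiber \(\adiab\F^*_{(x,0)}\) with this \(\alpha_\lambda\)-invariant closed set yields that \(\T^*_x\F\) is \(\alpha_\lambda\)-invariant; combined with the first point this says \(\delta_\lambda^*(\T^*_x\F)=\T^*_x\F\) for all \(x\in M\) and \(\lambda>0\), hence \(\delta_\lambda^*(\T^*\F)=\T^*\F\).

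It then remains to pass to the quotients. Because \(\delta_\lambda\) is a graded Lie group automorphism of each osculating group, \(\delta_\lambda^*\) is equivariant for the coadjoint actions and therefore descends to a homeomorphism of \(\HN(\Fb)=\faktor{\T^*\F}{\Ad^*(\gr(\Fb))}\); and since \(\delta_\lambda^*\) is linear on each fiber it fixes the zero covector of every \(\gr_x(\Fb)^*\), i.e. the trivial representation, so it maps \(M\times\{0\}\) and its complement to themselves, whence \(\HN_0(\Fb)\) is invariant as well. I do not expect a genuine obstacle; the only step needing care is the bookkeeping that identifies the restriction to the fiber over \((x,0)\) of the total-space homeomorphism \(\alpha_\lambda\) on \(\adiab\F^*\) with \(\delta_\lambda^*\) — which, as indicated, is exactly \cite[Proposition 1.6]{AMYMaximalHypoellipticity} together with the generator computation.
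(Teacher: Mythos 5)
Your proof is correct and follows exactly the route the paper intends: the paper simply remarks that the definition of \(\T^*\F\) is invariant under the zoom action \(\alpha_\lambda\) and states the proposition with no further detail, while you spell out the generator computation \((\alpha_\lambda)_*\big(t^j(X,0)\big)=\lambda^j t^j(X,0)\), the identification of the induced fiberwise action over \((x,0)\) with \(\delta_\lambda^*\) via \cite[Proposition 1.6]{AMYMaximalHypoellipticity}, and the invariance of \(T^*M\times\R^*_+\) and hence of its closure. The only bookkeeping quibble is that the induced map on \(\adiab\F^*\) covering \(\alpha_\lambda\) is the inverse transpose (not the transpose) of \((\alpha_\lambda)_*\) on fibers, but since one is checking invariance under the whole family \(\{\delta^*_\lambda\}_{\lambda>0}\) this does not affect the conclusion.
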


\subsection{The Rotschild-Stein method}\label{Subsection:Rotschild-Stein}

We now give an alternate construction of the Helffer-Nourrigat cone based on the work of Rothschild and Stein \cite{RothschildStein}. The idea is to choose generators of each \(\F^j\) to construct a bigger graded Lie group that surjects onto every osculating group. Dualizing this projection we obtain an embedding of the unitary representations of each osculating groups into the representations of this bigger group. We can then give an alternate description of the Helffer-Nourrigat cone, where the group does not depend on the point in \(M\). We now assume the modules of vector fields are (globally) finitely generated and use the following notations: let \(N\) denote the depth of the filtration, i.e. \(\F^j = \mathfrak{X}(M), \forall j \geq N\). Now consider as in Example \ref{Example : Finitely generated} vector fields \(X_1,\cdots,X_m \in \mathfrak{X}(M)\) and weights \(\nu_1,\cdots,\nu_m \in \N\) such that the submodule \(\F^k\) is generated by the \(X_I\) of weight at most \(k\), \(I \subset \{1,\cdots,m\}^n, n\in \N\). Now let \(\mathfrak{g}\) be the universal graded Lie algebra of depth \(N\) generated by elements \(Y_k, 1\leq k \leq m \) with \(Y_k\) having the same degree as \(X_k\), i.e. \(\nu_k\).

\begin{rem}
    In the original approach of \cite{RothschildStein} they only choose a generating family for \(\F^1\). This is because they consider the subriemannian situation where \(\F^{j+1} = \F^j + [\F^j,\F^1], \forall j \geq 1\), i.e. all the groups appearing are stratified Lie groups.

    It is also worth mentioning that we only need a generating family for the vector fields. In particular, given two different generating families, one can consider the union of both of these families. This gives rise to 3 universal nilpotent Lie algebras, \(\g_1, \g_2,\) and \(\g\) respectively, and \(\g\) projects on both \(\g_1\) and \(\g_2\). It is not the algebraic sum though, as it would (in general) not be a universal Lie algebra. 
\end{rem}

We define a linear map \(\beta \colon \g \to \mathfrak{X}(M)\) by sending \(Y_k\) to \(X_k\) and every commutator of weight at most \(N\) onto the respective commutator of \(X_j\)'s. This map is not a Lie algebra morphism because the Lie bracket of vector fields on \(M\) with length bigger than \(N\) will not necessarily vanish, whereas it automatically does in \(\g\). 
For \(x \in M\), we get a map \(\beta_x := \ev_x \circ \beta \colon \g \to T_xM\). This map is surjective, so by duality we get a linear injection \(\beta_x^* \colon T^*_xM \hookrightarrow \g^*\).

\begin{mydef}\label{Def: ConeHN Extrinsic}
    The (extrinsic) Helffer-Nourrigat cone, at a point \(x\in M\) is the set of elements \(\ell \in \g^*\) such that there exists a sequence \(((t_n,x_n,\xi_n))_{n\in \N} \in (\R^*_+ \times T^*M\setminus 0)^{\N}\) with \(t_n\rightarrow 0, x_n\rightarrow x\) and:
    \[\delta_{t_n}^*\beta_{x_n}^*(\xi_n) \rightarrow \ell.\]
    This set is stable under co-adjoint action on \(\g^*\) so we see it as a subset \(\HN_{\g}(\Fb)_x \subset \widehat{\g}\). Similarly as in \ref{Def: ConeHF Intrinsic} we also define its subset \(\HN_{0,\g}(\Fb)\).
\end{mydef}

In this definition \(\delta\) denotes the natural \(\R^*_+\)-action on the graded group \(\g\), similarly as for the osculating groups.

\begin{rem}
    In general the only requirements on the group \(\g\) should be that it is a (finite dimensional) graded Lie group of depth \(N\), equipped with a linear map \(\g \to \mathfrak{X}(M)\) sending \(\g_j\) to \(\F^j\). This map should preserve commutators of length lower or equal to \(N\). Finally every localized map \(\g \to T_xM, x\in M\) should be surjective.
\end{rem}

To define this version of the Helffer-Nourrigat cone, we had to make choices in order to construct the group \(\g\). We now show that definition \ref{Def: ConeHN Extrinsic} coincides with definition \ref{Def: ConeHF Intrinsic} and thus does not depend on any choice. To do this, first notice that since \(\beta\) sends \(\g^j\) into \(\F^j\) for each \(1\leq j \leq N\) then it induces a map:
\[\gr(\beta) \colon \g \to \gr(\Fb).\]
This map sends each \(Y_j\) to \(X_j \mod \F^{k_j-1}\) where \(1\leq k_j\leq N\) is the smallest integer for which \(X_j \in \F^{k_j}\). We can also localize this map to get a collection of maps:
\[\gr(\beta)_x \colon \g \to \gr(\Fb)_x, x \in M.\]
All these maps are surjective by construction. Moreover, since \(\beta\) preserves Lie brackets of length lower or equal to \(N\) then each \(\gr(\beta)_x\) is a Lie algebra homomorphism. Indeed the Lie brackets of length lower or equal to \(N\) are preserved, and the ones of higher length are equal to \(0\) on both sides. As such, \(\gr(\beta)_x\) also induces a group homomorphism between the respective (connected, simply connected) Lie groups, denoted the same way.

Either via pre-composition, or via the transpose map \(\tsp{}\gr(\beta)_x \) and the orbit method, the map \(\gr(\beta)_x\) induces a continuous map 
\[\widehat{\gr(\beta)_x} \colon \widehat{\gr(\Fb)_x} \to \widehat{\g}, x \in M.\]

\begin{prop}\label{Proposition : Independence HN}
    Let \(x\in M\), the map \(\widehat{\gr(\beta)_x}\) is a topological embedding. Moreover, it induces a homeomorphism between \(\HN_{(0)}(\Fb)_x\) and \(\HN_{(0),\g}(\Fb)_x\). In particular, the latter does not depend on the choice of generators of \(\Fb\) (up to homeomorphism).
\end{prop}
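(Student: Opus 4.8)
The plan is to split the claim into two essentially independent parts: (i) that $\widehat{\gr(\beta)_x}$ is a topological embedding, and (ii) that its image is exactly $\HN_{(0),\g}(\Fb)_x$. For (i) I would invoke the general principle that for a closed normal subgroup $N$ of a locally compact group $G$, the pullback map $\widehat{G/N}\to\widehat G$, $\sigma\mapsto\sigma\circ\pi$, is a homeomorphism onto the closed subset $\{[\rho]\in\widehat G: N\subset\ker\rho\}$ — equivalently, $C^*(G/N)$ is a quotient of $C^*(G)$ and the spectrum of a quotient $C^*$-algebra is a closed subspace. Applying this with $N_x:=\ker\gr(\beta)_x$ (a closed connected normal subgroup, so $\gr(\beta)_x$ realizes $\gr(\Fb)_x$ as $\g/N_x$) yields (i). Through Kirillov's homeomorphism this translates into: $\tsp{}\gr(\beta)_x$ identifies $\gr(\Fb)_x^{*}$ with the closed $\Ad^*(\g)$-invariant subspace $\n_x^{\perp}\subset\g^*$ (where $\n_x=\Lie N_x$), on which $N_x$ acts trivially because $\n_x$ is an ideal; as $\gr(\beta)_x$ is onto, $\tsp{}\gr(\beta)_x$ carries each $\Ad^*(\gr(\Fb)_x)$-orbit bijectively onto an $\Ad^*(\g)$-orbit inside $\n_x^{\perp}$, and since $\n_x^{\perp}$ is saturated and closed, the restriction to it of the orbit-space projection $\g^*\to\widehat\g$ is again a quotient map onto its closed image, so the topology it induces on $\widehat{\gr(\Fb)_x}=\n_x^{\perp}/\Ad^*(\g)$ is both the Kirillov topology and the subspace topology from $\widehat\g$. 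With (i) in hand, the remaining task is purely set-theoretic: show that the $\Ad^*$-saturated set $\mathcal W\subset\g^*$ of limits $\ell=\lim_n\delta_{t_n}^*\beta_{x_n}^*(\xi_n)$ of Definition~\ref{Def: ConeHN Extrinsic} equals $\tsp{}\gr(\beta)_x(\T^*_x\F)$. Once this is established, passing to coadjoint quotients gives $\widehat{\gr(\beta)_x}\big(\HN(\Fb)_x\big)=\HN_{\g}(\Fb)_x$ and likewise for $\HN_0$ (delete the zero orbit on both sides, using injectivity of $\tsp{}\gr(\beta)_x$), the embedding of part (i) upgrades these bijections to homeomorphisms, and independence of the choice of generators is immediate since $\HN_{(0)}(\Fb)_x$ is defined without any such choice (Definition~\ref{Def: ConeHF Intrinsic}).

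For the set equality I would argue inside the adiabatic foliation. With the chosen generators $X_1,\dots,X_m$, the singular foliation $\adiab\F$ is (globally) finitely generated by the family $\{Z_I:=t^{w(I)}(X_I,0)\}$ running over multi-indices $I$ of weight $w(I)=\sum_{k\in I}\nu_k\le N$, and every element of $\adiab\F$ vanishes along $M\times\{0\}$. Sending $Y_I\mapsto Z_I$ defines for each $p=(x,t)$ a surjection $\widetilde\beta_p\colon\g\twoheadrightarrow\adiab\F_p$, hence a continuous, fibrewise injective linear map $\Phi\colon\adiab\F^*\to\g^*\times(M\times\R_+)$; because the topology of $\adiab\F^*$ is the initial one for the projection and the evaluations $\lag\cdot,Z\rag$, $Z\in\adiab\F$, all of which factor through $\Phi$, the map $\Phi$ is a topological embedding. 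Over $t\ne0$, Hörmander's condition and invertibility of $t^{N}$ force $\adiab\F$ to restrict to the pullback of $TM$, so $\adiab\F^*_{(x,t)}\cong T^*_xM$ and a computation on generators gives $\widetilde\beta_{(x,t)}=\beta_x\circ\delta_t$, i.e. $\Phi|_{(x,t)}=\delta_t^*\circ\beta_x^*$; over $t=0$, matching the adiabatic grading of $\adiab\F_{(x,0)}$ with that of $\gr(\Fb)_x$ through \cite[Prop.~1.6]{AMYMaximalHypoellipticity} gives $\Phi|_{(x,0)}=\tsp{}\gr(\beta)_x$. Since $\Phi$ is an embedding it commutes with closures and intersections inside its image, so $\Phi(\T^*_x\F)$ is the intersection of $\{(x,0)\}\times\g^*$ with the closure of $\{((x,t),\delta_t^*\beta_x^*\xi): t>0\}$, and its projection to $\g^*$ is exactly $\mathcal W$ — provided that closure stays inside $\Phi(\adiab\F^*)$, i.e. provided $\Phi(\adiab\F^*)$ is closed in $\g^*\times(M\times\R_+)$. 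This closedness is the crux, and it holds: if $\xi_n\in\adiab\F^*_{p_n}$ and $\Phi(\xi_n)\to(p,\ell)$, take $Y\in\ker\widetilde\beta_p$ and write $\widetilde\beta(Y)=\sum_j g_j Z_{I_j}$ with $g_j\in\CCC^\infty(M\times\R_+)$ vanishing at $p$; then $\widetilde\beta_{p_n}(Y)=\sum_j g_j(p_n)\,\widetilde\beta_{p_n}(Y_{I_j})$, so $\ell(Y)=\lim_n\sum_j g_j(p_n)\,\lag\Phi(\xi_n),Y_{I_j}\rag=0$ because each $g_j(p_n)\to0$ while the pairings converge; hence $\ell\in(\ker\widetilde\beta_p)^\perp=\adiab\F_p^*$ and $(p,\ell)\in\Phi(\adiab\F^*)$. (Equivalently, this step can be phrased directly with convergent sequences in $\adiab\F^*$, which is just the definition of $\T^*_x\F$ unwound, without ever naming $\Phi$.) Combining the two inclusions yields $\mathcal W=\tsp{}\gr(\beta)_x(\T^*_x\F)$, and with part (i) the proposition follows.

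I expect the real obstacle to be the two verifications that interface $\Phi$ with the rest of the paper — that $\Phi|_{(x,t)}=\delta_t^*\beta_x^*$ for $t\ne0$ and that $\Phi|_{(x,0)}=\tsp{}\gr(\beta)_x$, which require carefully matching the adiabatic grading with the isomorphism of \cite[Prop.~1.6]{AMYMaximalHypoellipticity} and keeping precise track of the $\delta_t$-scaling — together with the closedness of $\Phi(\adiab\F^*)$; by contrast, the general statement about duals of quotient groups and the fact that $\Phi$ is an embedding are formal.
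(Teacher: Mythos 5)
Your argument is correct and follows the paper's own architecture: embed $\widehat{\gr(\Fb)_x}$ into $\widehat\g$ via the surjection $\gr(\beta)_x$, extend $\beta$ to a map $\widetilde\beta$ on the adiabatic foliation, identify its $t=0$ and $t\ne 0$ fibers, and conclude that the intrinsic and extrinsic limit definitions coincide. You diverge from the paper in two technical points, both to the good. For the embedding you invoke the fact that for a closed normal subgroup $N\triangleleft G$ the pullback $\widehat{G/N}\hookrightarrow\widehat G$ is a closed embedding, which comes from $C^*(G/N)$ being a quotient of $C^*(G)$; the paper instead argues directly on coadjoint orbits (that $\tsp\gr(\beta)_x$ is injective and intertwines the coadjoint actions) and then asserts that the induced map on orbit spaces is a topological embedding, a statement that in that formulation still requires the observation you supply, namely that the image $(\ker\gr(\beta)_x)^{\perp}\subset\g^*$ is a closed $\Ad^*$-saturated subset. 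The more substantive addition is your verification that $\Phi(\adiab\F^*)$ is closed in $\g^*\times M\times\R_+$: since $\Phi=\tsp\widetilde\beta$ is only a topological embedding, the closure of $T^*M\times\R^*_+$ taken inside $\adiab\F^*$ (as in Definition~\ref{Def: ConeHF Intrinsic}) is a priori smaller than the ambient closure appearing in Definition~\ref{Def: ConeHN Extrinsic}, and it is exactly the closedness of $\Phi(\adiab\F^*)$ that reconciles the two. The paper passes over this by asserting that ``the limit conditions are the same''; your estimate, pairing a putative limit $\ell$ against elements $Y\in\ker\widetilde\beta_p$, writing $\widetilde\beta(Y)\in I_p\adiab\F$ as $\sum_j g_j Z_{I_j}$ with $g_j(p)=0$, and using that $\g^*$ is finite dimensional so the pairings $\lag\Phi(\xi_n),Y_{I_j}\rag$ converge, supplies precisely the missing justification and is correct.
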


\begin{proof}
    We use the orbit method. Since \(\gr(\beta)_x\) is surjective, then \(\tsp\gr(\beta)_x\colon \gr_x(\Fb)^* \to \g^*\) is injective. Then if \(\xi \in \gr_x(\Fb)^*, g \in \g\)
    \[\tsp\gr(\beta)_x\left(\Ad^*_{\gr(\beta)_x(g)}\xi\right) = \Ad^*_g\left(\tsp\gr(\beta)_x(\xi)\right).\]
    Therefore two points \(\xi,\xi' \in \gr_x(\Fb)^*\) lie in the same co-adjoint orbit if and only if \(\tsp\gr(\beta)_x(\xi),\tsp\gr(\beta)_x(\xi') \in \g^*\) lie in the same co-adjoint orbit. Consequently the corresponding quotient map \(\widehat{\gr(\beta)} \colon \widehat{\gr_x(\Fb)} \to \widehat{\g}\) is injective and is also a topological embedding.

    To show that the Helffer-Nourrigat cones are the same, we extend \(\beta\) to \(\widetilde{\beta} \colon M \times \g \times \R \to TM \times \R\) by:
    \[\widetilde{\beta}(x,X,t) = (x,\beta_x(\delta_t(X)),t).\]
    The map \(\widetilde{\beta}\) takes values (when applied to sections) in the adiabatic foliation \(\mathfrak{a}\F\). The map \(\widetilde{\beta}^* \colon \mathfrak{a}\F^*\to M \times \g^* \times \R\) is also an embedding. At \(t=0\) we obtain the previous map \(\tsp\gr(\beta)\). At \(t\neq 0\) we obtain the map \(\delta^*_t\beta^*\). Therefore since the limit conditions defining \(\HN_{\g}(\F)\) and \(\HN(\F)\) are the same, we get
    \[\widehat{\gr(\beta)}\left(\HN(\Fb)\right) = \HN_{\g}(\Fb).\]
\end{proof}

\begin{rem}
    The independence of the choice of generators can also be proved without the intrinsic version of the Helffer-Nourrigat cones. If we get two set of generators, we can construct the respective groups \(\g_1, \g_2\). We can also create another nilpotent group \(\g\) by considering the universal Lie algebra generated by both the generators used for \(\g_1\) and \(\g_2\). Given some \(x \in M\), the map \(\g \to \gr(\Fb)_x\) factors through both the maps \(\g_i \to \gr(\Fb)_x\). Dually we get a commutative diagram:
    \[\xymatrix{\widehat{\g} & \ar[l]_{\iota^1_x} \widehat{\g_1} \\
                \widehat{\g_2}\ar[u]^{\iota^2_x} & \ar[u] \ar[l]\widehat{\gr(\Fb)_x}.}\]
    The maps \(\iota^i_x\) are both topological embeddings and, even though the images of \(\iota_x^1\) and \(\iota_x^2\) might differ, we have:
    \[\iota_x^1(\HN_{(0),\g_1}(\Fb)) = \iota_x^2(\HN_{(0),\g_2}(\Fb)).\]
    Indeed both are equal to the image of \(\HN_{(0)}(\Fb)_x\) under the map 
    \[\widehat{\gr(\Fb)_x} \to \widehat{\g}.\]
\end{rem}

From now on, we will omit the subscript \(\g\) in the Helffer-Nourrigat cone and will not make a distinction between its intrinsic and extrinsic definition.

\section{Pedersen stratification of a group}\label{Section:PedersenStratification}

Let \(\g\) be a graded group. Recall that a Jordan-Hölder basis is a basis \(X_1,\cdots,X_n\) of \(\g\) seen as a vector space, such that every \(\g_j = \lag X_1,\cdots,X_j\rag\) is an ideal of \(\g\). The idea of the Pedersen stratification is to regroup elements \(\xi \in \g^*\) based on the numbers:
\[d_{j,i}(\xi) := \dim\left(\Ad^*(\g_i) (\xi \! \! \! \mod \g_j^{\perp} )\right)\]
The subspaces \(\g_j\) being ideals, they are stable under the adjoint action. Therefore \(\g_j^{\perp}\subset\g\) is stable under the coadjoint action. Therefore we get an action of \(\g\) (hence of each \(\g_i\)) on the quotient \(\faktor{\g^*}{\g^{\perp}_j}\). We can see \(\g_j^*\subset \g^*\) because we have chosen a basis of \(\g\), we then have \(\g_j^* \cong \faktor{\g^*}{\g^{\perp}_j}\) and \(d_{j,i}\) is the dimension of the orbit under \(\g_i\) of the projection of \(\xi\) onto \(\g_j^*\).

The following properties of these numbers can be found in \cite{PedersenStratification, Bonnet}:

\begin{prop}
    Let \(\xi \in \g^*\), the numbers \(d_{j,i} =  d_{j,i}(\xi), 1\leq i,j\leq n\) satisfy:
    \begin{enumerate}
        \item \(d_{i,j} = d_{j,i}\).
        \item \(d_{j,i} - d_{j-1,i} \in \{0;1\}\).
        \item \(d_{j,j} - d_{j-1,j-1} \in \{0;2\}\).
        \item If \(d_{j,i} = d_{j-1,i}\) then \(d_{j,k} = d_{j-1,k}\) for all \(1\leq k \leq i\).
        \item For \(1\leq i \leq n\), \(d_{i,i} = d_{i-1,i}+1\) if and only if \(d_{i,i} = d_{i-1,i-1}+2\).
        \item For all \(g \in \g\), \(d_{j,i}(\Ad^*_g(\xi)) = d_{j,i}(\xi)\).
    \end{enumerate}
\end{prop}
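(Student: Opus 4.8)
My plan is to reduce all six statements to elementary facts about a single family of bilinear forms. The key observation, which I would establish first, is that
\[
d_{j,i}(\xi) \;=\; \rk\bigl(B^{\xi}_{j,i}\bigr), \qquad\text{where}\quad B^{\xi}_{j,i}\colon \g_i\times\g_j \longrightarrow \R,\quad (Y,Z)\longmapsto \langle\xi,[Y,Z]\rangle .
\]
To see this, note that since $\g_j$ is an ideal, $\g_j^{\perp}$ is $\Ad^*(\g)$-stable, so $\Ad^*(\g_i)$ acts on $\g_j^*\cong\faktor{\g^*}{\g_j^{\perp}}$ and $\xi\bmod\g_j^{\perp}$ is just the restriction $\xi|_{\g_j}$. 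For a Lie group action the dimension of the orbit through a point equals $\dim\g_i$ minus the dimension of the isotropy subalgebra, and the infinitesimal action of $Y\in\g_i$ on $\xi|_{\g_j}$ sends $Z\in\g_j$ to $-\langle\xi,[Y,Z]\rangle$ (here $[Y,Z]\in\g_j$ because $\g_j$ is an ideal); hence the isotropy subalgebra is the kernel of the linear map $\g_i\to\g_j^*$, $Y\mapsto\langle\xi,[Y,\cdot]\rangle|_{\g_j}$, and the orbit dimension is the rank of that map, i.e.\ of $B^{\xi}_{j,i}$.

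Granting this, I would dispatch (1), (2), (4), (6) by linear algebra on nested subspaces. For (1): antisymmetry of the bracket gives $B^{\xi}_{j,i}(Y,Z)=-B^{\xi}_{i,j}(Z,Y)$, and a bilinear form has the same rank as its transpose, so $d_{j,i}=d_{i,j}$. For (2): write $\phi_j\colon\g_i\to\g_j^*$ for $Y\mapsto\langle\xi,[Y,\cdot]\rangle|_{\g_j}$; the restriction $\pi\colon\g_j^*\to\g_{j-1}^*$ has one‑dimensional kernel $\R X_j^*$ and $\phi_{j-1}=\pi\circ\phi_j$, so $\rk\phi_{j-1}\in\{\rk\phi_j-1,\rk\phi_j\}$. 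For (4): $d_{j,i}=d_{j-1,i}$ means $\im\phi_j\cap\R X_j^*=0$; for $k\le i$ the corresponding map on $\g_k$ is simply $\phi_j|_{\g_k}$, so its image is contained in $\im\phi_j$ and also meets $\R X_j^*$ trivially, whence $d_{j,k}=d_{j-1,k}$. For (6): $\Ad_g$ preserves the ideals $\g_i$ and $\g_j$, so $B^{\Ad^*_g\xi}_{j,i}=B^{\xi}_{j,i}\circ(\Ad_{g^{-1}}\times\Ad_{g^{-1}})$ differs from $B^{\xi}_{j,i}$ by precomposition with an isomorphism and hence has the same rank.

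For (3) and (5) I would use parity. The form $B^{\xi}_{j,j}$ on $\g_j\times\g_j$ is alternating (as $\langle\xi,[Y,Y]\rangle=0$), and an alternating bilinear form over $\R$ has even rank, so $d_{j,j}(\xi)$ is always even. Combining with (1) and (2), $d_{j,j}-d_{j-1,j-1}=(d_{j,j}-d_{j,j-1})+(d_{j,j-1}-d_{j-1,j-1})$ is a sum of two elements of $\{0,1\}$ that is even, hence lies in $\{0,2\}$, which is (3). For (5), running the same decomposition with $j$ replaced by $i$, the summands $a=d_{i,i}-d_{i-1,i}$ and $b=d_{i-1,i}-d_{i-1,i-1}$ both lie in $\{0,1\}$ with $a+b$ even, forcing $a=b$; thus $d_{i,i}=d_{i-1,i}+1$ (i.e.\ $a=1$) holds precisely when $a+b=2$, i.e.\ when $d_{i,i}=d_{i-1,i-1}+2$. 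The only genuinely non‑bookkeeping ingredients are the two I would isolate at the start: the identification $d_{j,i}(\xi)=\rk(B^{\xi}_{j,i})$ (matching a coadjoint‑orbit dimension in the Jordan–Hölder quotient $\g_j^*$ with the rank of the bracket pairing, where the main care is that $\g_i$ and $\g_j$ are ideals so that all the actions and restrictions are well defined), and the classical fact that alternating forms have even rank, which drives (3) and (5); everything else is keeping the two subscripts of $d_{j,i}$ — and the two slots of $B^{\xi}_{j,i}$ — straight. These facts are of course already available in \cite{PedersenStratification,Bonnet}; the point of this organization is that the single invariant $\rk(B^{\xi}_{j,i})$ controls all six properties at once.
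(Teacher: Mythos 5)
Your proof is correct and follows essentially the same route as the paper: both reduce every assertion to the identification $d_{j,i}(\xi)=\rk B^{\xi}_{j,i}$ (the paper writes this as the rank of the $j\times i$ corner $M_{(j,i)}(\xi)$ of the bracket matrix in the Jordan--H\"older basis) and then appeal to antisymmetry of the bracket, the nested-subspace rank estimate, evenness of the rank of an alternating form, and $\Ad$-invariance. Your treatment of (2), (4) via the maps $\phi_j$ and of (3), (5) via the explicit parity decomposition is a slightly more detailed rendering of the paper's terser argument (which cites ``the contrapositions'' for (5)), but it is the same proof.
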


Before proving the result, we insist on the last property which implies that these numbers are invariants of the coadjoint orbit corresponding to \(\xi\). The idea of Pedersen's stratification is to regroup all the points (hence all the orbits) in \(\g^*\) that have the same \(d_{j,i}\). However we want to order them to get not just a union of orbits but a stratification.

\begin{proof}
    We can compute the dimension of the orbit by computing the dimension of its tangent space. Let \(M(\xi)\) be the matrix in the Jordan-Hölder basis of the bilinear map:
    \[(X,Y) \mapsto \lag\xi,[X,Y]\rag.\]
    Let \(M_{(j,i)}\) be the sub-matrix of \(M(\xi)\) formed by the j-th first rows and i-th first columns. Then \(d_{j,i} = \rk(M_{(j,i)}(\xi))\). Since \(M(\xi)\) is antisymmetric, this gives (1). 

    The number \(d_{j,i}-d_{j-1,i}\) is the dimension of the orbit under \(\g_i\) of the image of \(\xi\) in \(\faktor{\g_j^*}{\g_{j-1}^*}\). This space being of dimension 1, the dimension of the orbit is either \(0\) or \(1\). This proves (2).

    The matrix \(M(\xi)\) being antisymmetric, the rank \(d_{j,j}(\xi)\) of the sub-matrices \(M_{(j,j)}(\xi)\) (still antisymmetric) can only increase by \(2\)(or not change at all) when we go from \(j-1\) to \(j\). This proves (3).

    If \(d_{j,i} = d_{j-1,i}\) this means that the projection of \(\xi\) in \(\faktor{\g_j^*}{\g_{j-1}^*}\) is a fixed point for the action of \(\g_i\) (the orbit has dimension \(0\)). Then it is also the same for the subgroups \(\g_k, 1\leq k \leq i\). This proves (4).

    The matrix \(M_{(i,i-1)}\) is an intermediate sub-matrix between \(M_{(i-1,i-1)}\) and \(M_{(i,i)}\) so (5) follows directly from the previous points by considering the contrapositions. 

    Finally let \(g\in \g\), we have:
    \begin{align*}
        \Ad^*(\g_i)\Ad^*_g(\xi) &= \Ad^*_g\Ad^*_{g^{-1}}\Ad^*(\g_i)\Ad^*_g(\xi) \\
                                &= \Ad^*(g)\Ad^*(g\g_ig^{-1})\xi \\
                                &= \Ad^*(g)\Ad^*(\g_i)\xi
    \end{align*}
    Since \(\Ad^*_g\) is a diffeomorphism of \(\g^*\) it preserves the dimension and we get (6).   
\end{proof}

\begin{ex}
    Let \(\g = \g_1\oplus\cdots\oplus \g_N\) be a graded Lie algebra. Let \(X_1,\cdots,X_n\) be a basis of \(\g\) of eigenvectors for the inhomogeneous dilations. We have \(\delta_{\lambda}(X_i) = \lambda^{\nu_i}X_i, \lambda>0, 1\leq 1 \leq n\). If the \((\nu_i)_{1\leq i \leq n}\) form a non-increasing sequence then  \((X_i)_{1\leq i \leq n}\) is a Jordan-Hölder basis. With a graded Lie algebra, the weights \(\nu_i\) are integers and correspond to the grading of the elements \(X_i\), i.e. \(X_i \in \g_{\nu_i}\). The way to obtain such a basis is to first take a basis of \(\g_N\), then complete it with a basis of \(\g_{N-1}\), and so on until we have a basis of \(\g\). 

    We call such a Jordan-Hölder basis compatible with the grading.
\end{ex}

We now consider the indices for which a jump happens in dimension happens. We define:
\[J_i(\xi) = \{1\leq j \leq i, d_{j,i} = d_{j-1,i} +1 \}, 1\leq i \leq n.\]
We can reconstruct the numbers \(d_{j,i}(\xi)\) from these sets using properties (1)-(5). We put a partial order on subsets \(I, I' \subset \{1;\cdots;n\}\) by:
\[I\prec I' \Leftrightarrow \min(I\setminus I') \leq \min(I' \setminus I).\]
With the convention \(\min(\emptyset) = +\infty\), this defines a total ordering for which the maximal element is \(\emptyset\).

Now the collection of all our orbits form a set 
\[\mathcal{J}(\xi)= (J_n(\xi),\cdots,J_1(\xi)) \in \mathcal{P}(\{1,\cdots,n\})^n.\] 

Notice that using (6), we have that for all \(\xi \in \g^*\) and \(g\in \g\):
\[\mathcal{J}(\Ad^*_g(\xi)) = \mathcal{J}(\xi).\]

We put the lexicographic order on \(\mathcal{P}(\{1,\cdots,n\})^n\) using the previous order \(\prec\) on each component.

\begin{mydef}[Pedersen stratification]
    Let \(\g\) be a nilpotent group with a choice of Jordan-Hölder basis. For \(\mathcal{J}_0 \in \mathcal{P}(\{1,\cdots,n\})^n\) we set \(\Lambda_{\mathcal{J}_0} = \{\xi \in \g^* | \mathcal{J}(\xi) = \mathcal{J}_0\}\). The non-empty sets obtained that way can be ordered using the previous ordering for the indexing \(\mathcal{J}_0\). We denote by:
    \[\Lambda_1 \prec \cdots \prec \Lambda_d,\]
    the subsets obtained that way. We have \(\g^* = \bigsqcup_{k = 1}^d \Lambda_k\). Since these sets are stable under the coadjoint action, we also define 
    \[\Omega_k = \faktor{\Lambda_k}{\Ad^*(G)}, 1\leq k \leq d.\]
    We call the decomposition \(\widehat{\g} = \bigsqcup_{k = 1}^d \Omega_k\) the Pedersen stratification of \(\widehat{\g}\).
\end{mydef}

\begin{ex}
    Regardless of the Jordan-Hölder basis, we have:
    \[\Lambda_{(\emptyset,\cdots,\emptyset)} = \Omega_{(\emptyset,\cdots,\emptyset)} = [\g,\g]^{\perp}.\]
    Indeed, the points on both sides correspond to the fixed point for the co-adjoint action. The set \((\emptyset,\cdots,\emptyset)\) is necessarily the biggest for the previous order.
\end{ex}

\begin{rem}
    Although the final stratum in the stratification does not depend on the choice of the Jordan-Hölder basis, the others do. For instance the first stratum is always a subset of the flat orbits (see \cite{GoffengKuzmin}) but might depend on the Jordan-Hölder basis, see Example \ref{Ex: Complex Heisenberg group} below.
\end{rem}

\begin{thm}[Pedersen \cite{PedersenStratification}]
    Let \(\g\) be a nilpotent group with a choice of Jordan-Hölder basis. Let \(\Lambda_k, \Omega_k, 1\leq k \leq d\) denote the Pedersen strata, in \(\g^*\) and \(\widehat{\g}\) respectively. We have:
    \begin{itemize}
        \item The sets \(W_k = \bigcup_{\ell\leq k} \Lambda_{\ell}\) are Zariski open in \(\g^*\). Consequently, the sets \(U_k = \faktor{W_k}{\Ad^*(\g)} = \bigsqcup_{\ell\leq k}\Omega_{\ell}\) are open in \(\widehat{\g}\).
        \item For every \(1\leq k \leq d\), there is a decomposition \(\g^* = V_k^S\oplus V_k^T\) such that each orbit in \(\Lambda_k\) meets \(V_k^T\) exactly once. The corresponding set \(C_k = \Lambda_j \cap V_k^T\) is then a cross-section for the quotient map \(\Lambda_k \to \Omega_k\).
        \item There is a non-singular, birational bijection \(\Psi_k \colon C_k \times V_k^S \to \Lambda_k\) such that for each \(v\in C_k\), the map
        \[p_k^T \circ \Psi_k(v,\cdot) \colon V_k^S \to V_k^T,\]
        is a polynomial, whose graph is the orbit \(G\cdot v\). The map 
        \[\pr_1\circ \Psi_k^{-1} \colon \Lambda_k \to C_k\]
        is \(\Ad^*(\g)\)-invariant and the quotient is the inverse bijection of \(C_k \isomto \Omega_k\).
    \end{itemize}
\end{thm}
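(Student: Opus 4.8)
The plan is to reduce the entire statement to facts about the single matrix \(M(\xi)\) of the form \((X,Y)\mapsto\lag\xi,[X,Y]\rag\) written in the Jordan--Hölder basis, and its top-left submatrices \(M_{(j,i)}(\xi)\) whose ranks are the \(d_{j,i}(\xi)\). Since \(\xi\mapsto M(\xi)\) is linear, each \(d_{j,i}\) is lower semicontinuous for the Zariski topology on \(\g^*\), and by properties (1)--(5) the datum \(\mathcal{J}(\xi)\) is equivalent to the full array \(\bigl(d_{j,i}(\xi)\bigr)_{1\leq i,j\leq n}\); in fact \(\mathcal{J}(\xi)=\mathcal{J}(\xi')\) if and only if all the \(d_{j,i}\) agree. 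So the first bullet becomes a statement about rank loci, and the last two about the way coadjoint orbits sit inside one such locus.

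\emph{Zariski openness.} I would show each \(W_k\) is Zariski open by induction on \(k\), using that \(W_k=\{\xi:\mathcal{J}(\xi)\preceq\mathcal{J}_k\}\) is a down-set for the lexicographic order. For \(k=1\): a point lies in \(\Lambda_1\) exactly when every \(d_{j,i}\) attains its maximum over \(\g^*\), because then each column's jumps occur at the smallest possible indices, which one checks is the \(\preceq\)-least value of \(\mathcal{J}\) (the rank majorization \(d_{j,i}(\xi)\geq d_{j,i}(\xi')\) for all \(i,j\) forces \(\mathcal{J}(\xi)\preceq\mathcal{J}(\xi')\)); this locus is the non-empty --- as \(\g^*\) is irreducible --- intersection of the basic Zariski-opens \(\{\xi:d_{j,i}(\xi)\geq d_{j,i}^{\mathrm{max}}\}\), each defined by non-vanishing of a minor of \(M(\xi)\), and it equals \(\Lambda_1\) exactly since \(\mathcal{J}\) is determined by the \(d_{j,i}\). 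For the inductive step, set \(Z=\g^*\setminus W_{k-1}\), Zariski-closed by hypothesis; then "\(W_k\) open in \(\g^*\)" is equivalent to "\(\Lambda_k\) open in \(Z\)", and \(\Lambda_k\) is the locus of \(Z\) where \(\mathcal{J}\) takes its \(\preceq\)-least value there, which one cuts out inside \(Z\) by vanishing and non-vanishing of minors of \(M(\xi)\) --- handling that \(Z\) may be reducible. Openness of \(U_k\) in \(\widehat{\g}\) then follows since the quotient map \(\g^*\to\widehat{\g}\) is open. I expect the real work to be precisely this: translating "\(\mathcal{J}(\xi)\succ\mathcal{J}_k\)" into a finite list of polynomial equations and inequations and carrying out the bookkeeping with (1)--(5) over a possibly reducible \(Z\); this is combinatorics rather than geometry.

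\emph{Cross-sections and the flattening map.} Fix a stratum \(\Lambda_k\) and let \(e=J_n(\mathcal{J}_k)\subset\{1,\dots,n\}\) be its jump set; for any \(\xi\in\Lambda_k\) this is the set of pivot rows obtained by eliminating \(M(\xi)\) from the top, so \(|e|\) is even and equals the common orbit dimension on \(\Lambda_k\), and \(j\in e\) iff \(X_j\notin\g_{j-1}+\g(\xi)\), where \(\g(\xi)\) is the stabilizer. Crucially \(\mathcal{J}\), hence \(e\), is constant on \(\Lambda_k\), which is what lets one use a single splitting \(\g^*=V_k^T\oplus V_k^S\) with \(V_k^S=\Span\{X_j^*:j\in e\}\) and \(V_k^T=\Span\{X_j^*:j\notin e\}\) for the whole stratum --- this need for uniformity is why one must work with the fine Pedersen strata and not coarser invariant sets. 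Following Pukanszky, for \(\xi\in\Lambda_k\) the ordered product \((s_j)_{j\in e}\mapsto\prod_{j\in e}\exp(s_jX_j)\cdot\xi\) (taken in increasing order of \(j\), the \(X_j\) supplementing \(\g(\xi)\); the composition is polynomial since \(\g\) is nilpotent) is a polynomial diffeomorphism onto the orbit \(\mathcal{O}_\xi\), and its \(V_k^S\)-component is a triangular polynomial function of \((s_j)_{j\in e}\) whose linear part --- a matrix with entries given by minors of \(M(\xi)\) --- is invertible on \(\Lambda_k\); reparametrizing by the inverse exhibits \(\mathcal{O}_\xi\) as the graph over \(V_k^S\) of a polynomial map \(V_k^S\to V_k^T\) sending \(0\) to the unique point \(v(\xi)=\mathcal{O}_\xi\cap V_k^T\). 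Hence \(C_k=\Lambda_k\cap V_k^T\) is a cross-section for \(\Lambda_k\to\Omega_k\). Assembling these parametrizations over \(v\in C_k\) --- everything depending on \(\xi\) only through minors of \(M(\xi)\) invertible on \(\Lambda_k\) --- yields \(\Psi_k\colon C_k\times V_k^S\to\Lambda_k\): a bijection, polynomial in \(s\) for each fixed \(v\), with rational inverse and non-vanishing Jacobian, i.e. non-singular and birational; and \(\pr_1\circ\Psi_k^{-1}\) collapses each orbit to \(v(\xi)\), so it is \(\Ad^*(G)\)-invariant and descends to the inverse of \(C_k\isomto\Omega_k\).
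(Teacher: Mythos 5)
The paper does not prove this theorem: it is stated with a citation to Pedersen's original article and used as a black box. So the relevant comparison is with the classical Pukanszky--Pedersen argument, and your sketch does follow it in outline: the array \(d_{j,i}\) as ranks of minors of the skew form \(M(\xi)\), the jump set \(e = J_n\) dictating the splitting of \(\g^*\), and the Pukanszky parameterization of an orbit by ordered products of one-parameter subgroups indexed by \(e\), with the polynomial triangular structure coming from the Jordan--Hölder flag (\([\g,\g_j]\subset\g_{j-1}\) since each \(\g_j/\g_{j-1}\) is a trivial \(\g\)-module). That is the right skeleton, and it matches what Pedersen actually does.

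Two remarks. First, your openness argument can be closed more directly than the induction you sketch, and the difficulty you flag about \(Z=\g^*\setminus W_{k-1}\) being reducible then simply does not arise. You already isolate the decisive lemma: if \(d_{j,i}(\xi)\geq d_{j,i}(\xi_0)\) for all \(i,j\) then \(\mathcal{J}(\xi)\preceq\mathcal{J}(\xi_0)\). For any \(\xi_0\), the set \(N(\xi_0):=\{\xi : d_{j,i}(\xi)\geq d_{j,i}(\xi_0)\ \forall i,j\}\) is a finite intersection of loci where specified minors of \(M(\xi)\) do not all vanish, hence Zariski open, and it contains \(\xi_0\). By the lemma, \(N(\xi_0)\subset\{\mathcal{J}\preceq\mathcal{J}(\xi_0)\}\). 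Therefore, for every \(k\), \(W_k = \{\mathcal{J}\preceq\mathcal{J}_k\} = \bigcup_{\xi_0\in W_k} N(\xi_0)\) is a union of Zariski opens in \(\g^*\), hence open --- no induction on \(k\), no restriction to a subvariety \(Z\), no bookkeeping over reducible components. The ``real work'' you defer at the end of the openness paragraph is therefore already finished once the order lemma (which you state) is in hand; the effort you anticipate is not needed.

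Second, the parenthetical claim that the uniformity of the splitting ``is why one must work with the fine Pedersen strata and not coarser invariant sets'' is not correct. The decomposition \(\g^*=V_k^S\oplus V_k^T\) depends only on \(e=J_n\), which is constant on Pukanszky's coarse strata as well, and the paper explicitly remarks that this theorem also holds for the coarse stratification. The fine stratification earns its keep elsewhere in the paper (triviality of the subquotient fields of the group \(C^*\)-algebra), not in the existence of the algebraic cross-section.
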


This theorem says that we can identify the sets \(\Omega_k\subset \widehat{\g}\) to a semi-algebraic set \(C_k \subset \g^*\) and that, after some birational reparameterization, the set of corresponding orbits \(\Lambda_k\) is homeomorphic to \(C_k \times V_k^S\). This reparameterization is important as the orbits don't have to be flat. The first property is also crucial in what will follow as it will enable us to define ideals in the group \(C^*\)-algebra. It also gives sense to the term \emph{stratification} since we are adding "layers" to an increasing union of open sets in \(\widehat{\g}\). 

\begin{rem}
    There is another stratification of the orbits due to Pukanszky \cite{Pukanszky}. It is obtained by regrouping points (hence orbits) \(\xi \in\g^*\) that share the same set \(J_n(\xi)\) and the order is the same as before. Therefore Pedersen's stratification is finer than the one of Pukanszky in the sense that his strata are subsets of strata in the Pukanszky's stratification. The former is often called the \emph{fine} stratification while the latter is the \emph{coarse} stratification. The coarse stratification has similar properties as the fine one, the previous theorem is true for the coarse stratification as well. In most simple examples, the two coincide. See Example \ref{Ex: Complex Heisenberg group} below to witness a difference. 
\end{rem}

The advantage of using the fine stratification instead of the coarse one can be seen through the next result.

\begin{lem}
    Let \(\g\) be a graded nilpotent group and fix a Jordan-Hölder basis compatible with the grading. The inhomogeneous dilations \(\delta_{\lambda} \in \Aut(\g), \lambda > 0\) define an action of the positive real numbers on \(\widehat{\g}\) by precomposition. The Kirillov homeomorphism \(\widehat{\g} \cong \faktor{\g^*}{\Ad^*(G)}\) is equivariant with the action of \((\tsp\delta_{\lambda})_{\lambda > 0}\) on \(\g^*\).
    Moreover the sets \(\Lambda_k, \Omega_k, W_k,U_k\) are all invariant under the respective \(\R^*_+\)-action. 
    Finally, the action \(\R^*_+ \act \Omega_k\) is free and proper for \(k < d\) and so is the action \(\R^*_+\act \Omega_d\setminus\{0\}\).
\end{lem}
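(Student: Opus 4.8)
The plan is to establish the four assertions in turn — that the dilations define an $\R^*_+$-action on $\widehat{\g}$, that Kirillov's homeomorphism intertwines it with the transposed dilations on $\g^*$, that each of $\Lambda_k,\Omega_k,W_k,U_k$ is preserved, and finally freeness and properness on the individual strata. For the first two: since the fixed Jordan--Hölder basis is compatible with the grading, each $\delta_\lambda$ is a graded Lie algebra automorphism, hence integrates to an automorphism of the simply connected group $G$; as $\R^*_+$ is abelian and $\delta_\lambda\circ\delta_\mu=\delta_{\lambda\mu}$, the rules $\pi\mapsto\pi\circ\delta_\lambda$ and $\xi\mapsto\tsp{\delta_\lambda}\xi$ are (left) $\R^*_+$-actions on $\widehat{\g}$ and on $\g^*$. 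Equivariance of the Kirillov homeomorphism is the naturality of the orbit method under automorphisms: realising $\pi_\xi$ as induced from the character $Y\mapsto e^{\ii\lag\xi,Y\rag}$ of $\exp\mathfrak{h}$ for a polarization $\mathfrak{h}$ at $\xi$, one sees that for an automorphism $\vphi$ of $G$ the representation $\pi_\xi\circ\vphi$ is induced from the analogous character of $\exp\big((\diff\vphi)^{-1}\mathfrak{h}\big)$, a polarization at $\tsp{(\diff\vphi)}\xi$, so $\pi_\xi\circ\vphi\cong\pi_{\tsp{(\diff\vphi)}\xi}$; taking $\vphi=\delta_\lambda$ gives the claim.

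For the invariance of the strata, let $M(\xi)$ be the matrix of $(X,Y)\mapsto\lag\xi,[X,Y]\rag$ in the Jordan--Hölder basis, as in the proof of the properties of the $d_{j,i}$. Because the basis is grading-compatible, $\tsp{\delta_\lambda}$ acts on the dual basis by $X_i^*\mapsto\lambda^{\nu_i}X_i^*$, so $M(\tsp{\delta_\lambda}\xi)=D_\lambda\,M(\xi)\,D_\lambda$ with $D_\lambda=\mathrm{diag}(\lambda^{\nu_1},\dots,\lambda^{\nu_n})$ invertible, and the same identity holds for the submatrices $M_{(j,i)}$; hence $d_{j,i}(\tsp{\delta_\lambda}\xi)=d_{j,i}(\xi)$ for all $i,j$ and $\mathcal{J}(\tsp{\delta_\lambda}\xi)=\mathcal{J}(\xi)$. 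Thus each $\Lambda_k=\{\xi:\mathcal{J}(\xi)=\mathcal{J}_0\}$ is $\tsp{\delta_\lambda}$-invariant, and since $\tsp{\delta_\lambda}$ normalises the coadjoint action (it carries the coadjoint orbit of $\xi$ to that of $\tsp{\delta_\lambda}\xi$) the transposed dilations descend to $\faktor{\g^*}{\Ad^*(G)}=\widehat{\g}$, preserve $\Omega_k$, and preserve $W_k$ and $U_k$ since these are the unions of the $\Lambda_\ell$, resp.\ $\Omega_\ell$, with $\ell\le k$. Under the first paragraph this descended action is the precomposition action.

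For freeness and properness I would transport everything to the Pedersen cross-section. In Pedersen's theorem the splitting $\g^*=V_k^S\oplus V_k^T$ is realised by taking $V_k^S,V_k^T$ to be spanned by complementary subsets of the dual Jordan--Hölder basis $(X_i^*)$, the index sets being prescribed by the jump data of the stratum; since that basis is grading-compatible, $V_k^S$ and $V_k^T$ are $\tsp{\delta_\lambda}$-invariant, hence so is $C_k=\Lambda_k\cap V_k^T$. Consequently the homeomorphism $C_k\isomto\Omega_k$ of that theorem is $\R^*_+$-equivariant and identifies the action on $\Omega_k$ with the restriction to $C_k$ of the linear action $\tsp{\delta_\lambda}$ on $\g^*$. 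As $0\in\Lambda_k$ only for $k=d$ (where $\Lambda_d=\Omega_d=[\g,\g]^{\perp}$), it remains to check that $\tsp{\delta_\lambda}$ acts freely and properly on $C_k\setminus\{0\}$. Freeness: $\tsp{\delta_\lambda}v=v$ with $v\ne 0$ forces $\lambda^{\nu_i}=1$ for some $i$ with $v_i\ne 0$, hence $\lambda=1$ since every $\nu_i\ge 1$. Properness: with $\|v\|=\max_i|v_i|$ one has $\|\tsp{\delta_\lambda}v\|\ge\lambda\|v\|$ for $\lambda\ge 1$ and $\|\tsp{\delta_\lambda}v\|\le\lambda\|v\|$ for $0<\lambda\le 1$, so for a compact $K\subset C_k\setminus\{0\}$ with $0<c\le\|v\|\le C$ on $K$, the relation $\tsp{\delta_\lambda}v\in K$, $v\in K$, forces $\lambda\in[c/C,C/c]$; this is properness, which then transfers to $\Omega_k$ (resp.\ $\Omega_d\setminus\{0\}$). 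I expect this last step to be the genuinely delicate one, precisely the input that Pedersen's splitting is adapted to the grading-compatible basis so that the cross-section — and hence the $\R^*_+$-action on $\Omega_k$ — is linear; granting that, the norm estimate is routine, and the remaining ingredients (naturality of Kirillov's map, the matrix computation) are standard.
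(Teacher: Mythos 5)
Your proof is correct and follows essentially the same route as the paper's: equivariance of Kirillov's map via polarizations and functoriality of induction, and invariance of the strata via $d_{j,i}(\tsp\delta_\lambda\xi)=d_{j,i}(\xi)$ for a grading-compatible Jordan--Hölder basis (your matrix identity $M(\tsp\delta_\lambda\xi)=D_\lambda M(\xi)D_\lambda$ makes this explicit). The paper dispatches the freeness and properness assertion with ``and the rest follows''; your cross-section argument is a careful and correct completion of exactly that step, relying, as you rightly flag, on the fact that Pedersen's splitting $V_k^S\oplus V_k^T$ is spanned by dual Jordan--Hölder coordinates and hence is $\tsp\delta_\lambda$-invariant.
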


\begin{proof}
    The equivariance of Kirillov's map is clear from its construction. Let \(\xi \in \g^*\), let \(\n \subset \g\) be a subalgebra for which \(\lag\xi,[\n,\n]\rag = 0\) and maximal for that property (we call \(\n\) a polarization for the orbit). Then \(\xi\) defines a character \(\chi_{\xi,\n}\) of \(\n\) by \(X \mapsto \exp(2\ii \pi \xi(X))\). The representation corresponding to the orbit \(\Ad^*(G)\cdot \xi\) is then \(\mathrm{Ind}_{\n}^{\g}(\chi_{\xi,\n})\). It can be shown (c.f. \cite{Kirillov}) to be irreducible, and not depend on the choice of \(\xi\) or \(\n\), up to unitary equivalence. Here if we fix \(\lambda > 0\) and take \(\xi\) and \(\n\), then the algebra \(\delta_{\lambda^{-1}}(\n)\) is a polarization for \(\tsp\delta_{\lambda}(\xi)\). Moreover, 
    \[\chi_{\tsp\delta_{\lambda}(\xi),\delta_{\lambda^{-1}}(\n)} = \chi_{\xi,\n}\circ \delta_{\lambda}.\] 
    We conclude by functoriality of the induction.
    
    Since the Jordan-Hölder basis is compatible with the grading, we have:
    \[\forall 1\leq i,j\leq n, d_{j,i}(\tsp\delta_{\lambda}(\xi)) = d_{j,i}(\xi),\]
    and the rest follows.
\end{proof}

\begin{thm}[Lipsman, Rosenberg \cite{LipsmanRosenberg}]
    Let \(\g\) be a nilpotent group with a Jordan-Hölder basis and \(\Omega_k \subset \widehat{\g}\) any of the Pedersen strata. Then there exists an integer \(n \in \N\) such that every representation of \(\Omega_k\) can be realized on the space \(L^2(\R^n)\), with smooth vectors being the Schwartz functions \(\mathscr{S}(\R^n)\), and the elements of \(\mathcal{U}(\g)\) are realized as differential operators with polynomial coefficients. These coefficients have a rational non-singular dependence in the representation \(\pi\in \Omega_k\).
\end{thm}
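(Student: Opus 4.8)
Following \cite{LipsmanRosenberg, PedersenStratification}, the plan is to produce, over the cross-section $C_k$ of the stratum $\Omega_k$ furnished by the previous theorem, a \emph{uniform} Vergne polarization whose dependence on the orbit parameter is rational and non-singular, and then to read the induced representation off in the associated exponential coordinates.

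First I would fix the Jordan-Hölder flag $\g_1 \subset \g_2 \subset \cdots \subset \g$ attached to the chosen basis (so $\g_j = \lag X_1,\dots,X_j\rag$), and use that \emph{all} the numbers $d_{j,i}(\xi)$ are constant on $\Omega_k$ --- the defining property of a Pedersen stratum. Consequently the radicals $\n_i(\xi) := \{X \in \g_i \mid \lag \xi, [X,\g_i]\rag = 0\}$ have constant dimension $i - d_{i,i}(\xi)$, and the Vergne polarization $\n(\xi) := \sum_{i} \n_i(\xi)$ is a subalgebra subordinate to $\xi$ of constant dimension $\dim\g - \tfrac12 \rk M(\xi)$. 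A basis of each $\n_i(\xi)$ is obtained by row-reducing the antisymmetric submatrix $M_{(i,i)}(\xi)$, whose rank $d_{i,i}(\xi)$ is constant on $C_k$; by Cramer's rule its entries are rational functions of $\xi$ with denominators that are nowhere-vanishing minors on $C_k$. Extracting a maximal independent subfamily (of constant cardinality, again controlled by the $d_{j,i}$) yields a basis of $\n(\xi)$ that is rational and non-singular in $\xi \in C_k$. Since the index set $E$ of basis vectors $X_e$ spanning a complement of $\n(\xi)$ in $\g$ is itself determined by the $d_{j,i}$, it is constant on the stratum; thus the complement $\mathfrak{m} := \lag X_e : e \in E\rag$ is a \emph{fixed} subspace of $\g$, identified with $\R^n$ for $n := \dim\g - \dim\n(\xi) = \tfrac12\rk M(\xi)$.

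Next I would realize $\pi_\xi = \ind_{\n(\xi)}^{\g}(\chi_{\xi,\n(\xi)})$ concretely. The map $\mathfrak{m} \times \n(\xi) \to \g$, $(v,Z) \mapsto \exp(v)\exp(Z)$, is a polynomial diffeomorphism in exponential coordinates (Baker--Campbell--Hausdorff), and it intertwines $\pi_\xi$ with the standard action of $\g$ on $L^2(\mathfrak{m}) \cong L^2(\R^n)$. For each basis vector $X$ of $\g$, the operator $d\pi_\xi(X)$ is then a first-order differential operator on $\R^n$ with polynomial coefficients in the coordinates of $\mathfrak{m}$; these coefficients are assembled from the (fixed) structure constants of $\g$, the linear form $\xi$ (entering polynomially, in fact linearly), and the polarization basis $(Z_j(\xi))$ (entering rationally and non-singularly). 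Taking products, every element of $\mathcal{U}(\g)$ is realized as a polynomial-coefficient differential operator on $\R^n$ whose coefficients depend rationally and non-singularly on $\xi \in C_k$, hence on $\pi \in \Omega_k$ under the identification $C_k \isom \Omega_k$. Finally, for this polarization realization of a monomial representation of a connected simply connected nilpotent Lie group, the space of smooth vectors is exactly $\mathscr{S}(\R^n)$: the inclusion $\mathscr{S}(\R^n) \subset \mathcal{H}^\infty$ is immediate because the $d\pi_\xi(X)$ preserve $\mathscr{S}(\R^n)$, and conversely the operators coming from $\mathfrak{m}$ and from $\n(\xi)$ recover --- up to the nowhere-vanishing scalar factors appearing as denominators above --- all partial derivatives $\partial_{s_j}$ and all coordinate multiplications $s_j$, so finiteness of the seminorms $v \mapsto \|d\pi_\xi(u)v\|$, $u \in \mathcal{U}(\g)$, forces $v$ to be Schwartz; this is classical, cf. \cite{Kirillov}.

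The main obstacle is the uniformity asserted in the first step: that a single complementary subspace $\mathfrak{m}$ together with a rationally varying basis of the polarization can be chosen over the \emph{entire} stratum. This is precisely what the Pedersen stratification provides --- constancy of all $d_{j,i}$, hence of the jump index sets and of every dimension entering the Vergne construction --- combined with elementary but careful linear algebra (Gaussian elimination with Cramer-type bounds) to upgrade ``constant dimension'' to ``rational non-singular family of bases''. A secondary point to verify is that the unitary equivalence between the abstract induced representation and its concrete model can be implemented compatibly with the parameterization, so that the claimed rational non-singular dependence is intrinsic to $\Omega_k$ and not an artifact of the chosen coordinates.
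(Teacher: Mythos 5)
The paper does not actually prove this theorem; it is quoted from Lipsman and Rosenberg. Your proposal reconstructs the standard argument behind that result, and it is essentially correct: the constancy of the invariants \(d_{j,i}\) on a Pedersen stratum guarantees that the Vergne polarization \(\n(\xi)=\sum_i\n_i(\xi)\) has constant dimension and can be equipped with a rational non-singular basis by minor selection on the matrices \(M_{(i,i)}(\xi)\); the induced representation is then realized on \(L^2\) of a fixed coexponential complement \(\mathfrak m\cong\R^n\) in exponential coordinates, which yields \(\mathcal U(\g)\) as polynomial differential operators with coefficients depending rationally on \(\xi\in C_k\); and the identification of smooth vectors with \(\mathscr S(\R^n)\) is classical for nilpotent groups in this realization.

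The one step you gloss over is the assertion that a single index set \(E\) of Jordan--H\"older basis vectors gives a complement of \(\n(\xi)\) uniformly over the whole stratum. This is true, but it is not immediate from ``constant dimension'': what one needs is that the full filtration data \(\dim\bigl(\g_i\cap\n(\xi)\bigr)\), \(1\le i\le n\), is determined by the \((d_{j,i})\) for the Vergne polarization, hence constant on the stratum, so that the jump indices of \(\n(\xi)\) relative to the Jordan--H\"older flag do not move. That point is exactly what Pedersen's fine stratification is engineered to achieve, and it is the technical heart of the Lipsman--Rosenberg construction, so it deserves to be stated explicitly rather than folded into ``the index set is determined by the \(d_{j,i}\).'' Once that is granted, the remainder of your argument is sound.
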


The next theorem uses the group \(C^*\)-algebra. Given a group \(G\), it is obtained by considering the algebra \(\CCC^{\infty}_c(G)\) for the convolution product. It acts on \(L^2(G)\) by convolution. The resulting operators are bounded. The \(C^*\)-algebra \(C^*(G)\) is then the closure of \(\CCC^{\infty}_c(G)\) in the algebra of bounded operators on \(L^2(G)\). Its spectrum is homeomorphic to \(\widehat{\g}\) and any of its open subsets defines an ideal of \(C^*(G)\), for which the quotient has a spectrum homeomorphic to the complement of the open set.

\begin{cor}\label{Cor: Solvability group}
    Let \(\g\) be a nilpotent group with a Jordan-Hölder basis. Let \(\Omega_1,\cdots,\Omega_d \subset \widehat{\g}\) be the Pedersen strata and \(U_k = \cup_{\ell \leq k} \Omega_{\ell}\) their increasing unions.

    Let \(J_k \triangleleft C^*(G)\) be the ideal corresponding to the open set \(U_k\). Then we have an increasing sequence of ideals:
    \[\{0\} = J_0 \triangleleft J_1 \triangleleft \dots\triangleleft J_d = C^*(G), \]
    with for each \(1\leq k \leq d\) a homeomorphism:
    \[\faktor{J_k}{J_{k-1}} \cong \CCC_0(\Omega_k,\mathcal{K}_k).\]
    The algebra \(\mathcal{K}_k\) is the algebra of compact operators on a separable Hilbert space, of infinite dimension for \(k<d\) and of dimension 1 for \(k = d\).

    Finally if \(\g\) is graded and the Jordan-Hölder basis compatible with the grading, then the ideals \(J_k\) are stable under the action of the \(\delta_{\lambda}^* \in \mathrm{Aut}(C^*(G)), \lambda > 0\). The isomorphism with the subquotient is then \(\R^*_+\)-equivariant. 
\end{cor}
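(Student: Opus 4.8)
The strategy is to feed the three ingredients already assembled --- Pedersen's openness statement, the dilation lemma, and the Lipsman--Rosenberg uniform realization --- into the standard dictionary between open subsets of the spectrum of a \(C^*\)-algebra and its closed two-sided ideals. For the first assertion, since \(\widehat{\g}\) is the spectrum of \(C^*(G)\) and each \(U_k = \bigsqcup_{\ell \le k}\Omega_\ell\) is open by Pedersen's theorem, the rule \(U \mapsto J(U) := \{a \in C^*(G) : \pi(a) = 0 \text{ for all } \pi \notin U\}\) yields at once the increasing chain \(\{0\} = J_0 \triangleleft J_1 \triangleleft \cdots \triangleleft J_d = C^*(G)\) with \(\widehat{J_k} \cong U_k\). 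Each subquotient \(A_k := J_k/J_{k-1}\) is then a \(C^*\)-algebra whose spectrum is \(U_k \setminus U_{k-1} = \Omega_k\), and \(\Omega_k\) is locally compact Hausdorff because the Pedersen cross-section gives a homeomorphism \(\Omega_k \cong C_k \subset \g^*\). Everything therefore reduces to identifying \(A_k\) with \(\CCC_0(\Omega_k, \mathcal{K}_k)\).

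For this I would invoke Lipsman--Rosenberg: there is a fixed separable Hilbert space \(H_k = L^2(\R^{n_k})\) --- with \(n_k = 0\) (so \(H_d = \CC\)) on the top stratum \(\Omega_d = [\g,\g]^\perp\) of characters, and \(n_k \ge 1\) otherwise --- on which every \(\pi \in \Omega_k\) acts, the operators \(\pi(g)\), hence \(\pi(f) = \int_G f(g)\pi(g)\diff g\) for \(f \in \CCC^\infty_c(G)\), depending rationally, in particular norm-continuously, on \(\pi \in \Omega_k \cong C_k\); moreover each \(\pi(f)\) is compact since \(C^*(G)\) is liminal (\(G\) being a connected nilpotent Lie group). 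Because the irreducible representations of the ideal \(A_k\) are precisely the restrictions of the \(\pi \in \Omega_k\) and these separate the points of \(A_k\), the evaluation map
\[\Phi_k \colon A_k \longrightarrow \CCC_b\bigl(\Omega_k, \mathcal{K}(H_k)\bigr), \qquad \Phi_k(a)(\pi) = \pi(a),\]
is a well-defined isometric \(*\)-homomorphism, and one checks its image lies in \(\CCC_0(\Omega_k, \mathcal{K}(H_k))\) using that \(\pi \mapsto \|\pi(a)\|\) vanishes at infinity on the Hausdorff space \(\Omega_k\).

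Surjectivity of \(\Phi_k\) I would obtain by a Stone--Weierstrass / Dauns--Hofmann argument: \(\Phi_k(A_k)\) is a \(\CCC_0(\Omega_k)\)-submodule of \(\CCC_0(\Omega_k,\mathcal{K}(H_k))\) acting irreducibly on \(H_k\) at each point of \(\Omega_k\), and such a subalgebra must be the whole of \(\CCC_0(\Omega_k,\mathcal{K}(H_k))\); one then sets \(\mathcal{K}_k := \mathcal{K}(H_k)\), which is infinite-dimensional for \(k < d\) (the orbits in \(\Lambda_k\) then having positive even dimension) and one-dimensional for \(k = d\). For the \(\R^*_+\)-equivariance in the graded case the work is formal: by the dilation lemma the sets \(U_k\) are \(\delta_\lambda\)-invariant, hence each \(J_k = J(U_k)\) is \(\delta^*_\lambda\)-stable, and the induced automorphism of \(A_k\) corresponds under \(\Phi_k\) to precomposition by \(\delta_\lambda\) on \(\Omega_k\) twisted by the pointwise unitary conjugation that implements \(\pi \circ \delta_\lambda \cong \delta^*_\lambda\pi\) from the proof of that lemma; thus the isomorphism \(A_k \cong \CCC_0(\Omega_k,\mathcal{K}_k)\) is \(\R^*_+\)-equivariant.

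The step I expect to be the genuine obstacle is the pair ``\(\CCC_0\)-decay plus surjectivity of \(\Phi_k\)'': both rest on upgrading the ``rational non-singular dependence'' of Lipsman--Rosenberg to actual norm-continuity and triviality of the field of elementary algebras \(\pi \mapsto \mathcal{K}(H_\pi)\) over the merely semi-algebraic parameter space \(C_k\), i.e. on controlling the representations uniformly as one approaches the boundary of \(C_k\). The chain of ideals and the equivariance are comparatively formal once this uniformity is secured.
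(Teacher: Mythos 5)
The paper leaves Corollary \ref{Cor: Solvability group} without an explicit proof, treating it as an immediate consequence of the three preceding results (Pedersen's openness, the dilation lemma, Lipsman--Rosenberg), and your argument is precisely the standard filling-in the author had in mind: the ideal--open set dictionary for the spectrum, the evaluation map on each subquotient, and Lipsman--Rosenberg supplying the global trivialization of the field of elementary algebras so that the Dixmier--Douady class is killed. Your final paragraph correctly isolates the only non-formal point, which the paper implicitly confirms by the discussion immediately after the corollary comparing the fine and coarse stratifications via the Dixmier--Douady invariant.
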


It is not clear if such a result is true if one replaces the fine stratification with the Coarse one. The subquotients would have Hausdorff spectrum, they would be continuous fields of \(C^*\)-algebras over their spectrum. These fields however have no reason to be trivial. This would require the corresponding Dixmier-Douady invariants:
\[\delta_{DD}\left(\faktor{J_i}{J_{i-1}}\right) \in H^3(\Omega_i;\Z),\]
to vanish (see \cite{DixmierDouady}). It is however unclear if such a counterexample exists.

\begin{rem}
    It is worth mentioning though that in some cases we can ensure that it is true. For instance, for polycontact groups (a particular case of graded groups of step 2, such as the Heisenberg group), the coarse stratification does provide subquotients that are trivial fields of \(C^*\)-algebras. The reason is that for these groups, the orbits are either flat or points. It can be shown (see \cite[Proposition 9.5]{GoffengKuzmin}) that the set \(\Gamma\) of flat orbits is open and the corresponding ideal \(I_G \triangleleft C^*(G)\) is isomorphic to \(\CCC_0(\Gamma,\mathcal{K}(\mathcal{H}))\). Therefore if \(G\) is a polycontact group then we have an exact sequence:
    \[\xymatrix{0 \ar[r] & \CCC_0(\Gamma,\mathcal{K}(\mathcal{H})) \ar[r] & C^*(G) \ar[r] & \CCC_0\left( [\g,\g]^{\perp}\right) \ar[r] & 0.}\]
\end{rem}

\section{Examples of Pedersen stratification for groups}\label{Section: ExamplesDual}

In this section, we describe the Pedersen stratification for some graded groups where computations can be made explicit. They will be used in Section \ref{Section: ExamplesCones} with classical examples from sub-riemannian geometry.

\begin{ex}[Abelian group] \label{Ex:Abelian group}
    If \(\g\) is an abelian graded Lie group then \(\widehat{\g} = \g^*\) and the Pedersen stratification consists of only one stratum.
\end{ex}

\begin{ex}[Heisenberg group] \label{Ex: Heisenberg group}
    Let \(H_{2n+1}\) be the Heisenberg group, \(n\in \N\). Its Lie algebra is spanned by elements \(X_1,\cdots,X_n,Y_1,\cdots,Y_n,Z\) satisfying the relations \([X_i,Y_j] = \delta_{i,j}Z\) and \(Z\) spans the center. In the dual basis, the only non-trivial co-adjoint actions are:
    \begin{align*}
        \Ad^*_{\exp(tX_k)}Z^* &= Z^*+ tY_k^* \\
        \Ad^*_{\exp(tY_k)}Z^* &= Z^*- tX_k^*.
    \end{align*}
    Therefore the co-adjoint orbits are either the hyperplanes \(\{Z^* = \lambda\}, \lambda \neq 0\) and the points \(\{(x,y,0)\}, x,y \in \R^n\). We get two strata, the first \(\Omega_1 = \R^*\) corresponds to infinite dimensional representations and the second \(\Omega_2= \R^{2n}\) corresponds to the characters. Notice that the two strata cannot be glued in a Hausdorff way since a sequence of points converging to \(0\) in \(\Omega_1\) converges to all the points in \(\Omega_2\).
\end{ex}

\begin{ex}[Complex Heisenberg group]\label{Ex: Complex Heisenberg group}
    Consider \(H_{3}^{\CC}\) obtained by complexifying the Lie algebra but then seen as a real Lie algebra. It is still nilpotent of depth 2. The first strata has now dimension \(4\) and the center dimension \(2\). The coadjoint orbits are similar as for the real Heisenberg group. Flat ones corresponding to \(\gamma_1Z^* +\gamma_2\ii Z^* + \CC^2\) for \((\gamma_1,\gamma_2)\neq 0\). The other orbits are the other points, each representing a single orbit. The stratification however, depends on the Jordan-Hölder basis and differs from the real case.

    Consider first the Jordan-Hölder basis \(\ii Z, Z, \ii Y, Y, \ii X, X \). Consider coordinates \(\xi = (\gamma_2,\gamma_1,\beta_2,\beta_1,\alpha_2,\alpha_1)\) in the dual basis. Then when considering the action of the subgroups of \(H_{3}^{\CC}\), we see that:
    \[d_{3,5}(\xi) = \begin{cases}
        1, \gamma_1 \neq 0 \\
        0, \gamma_1 = 0
    \end{cases} \ .\]
    Therefore the strata for this Jordan-Hölder basis are given by:
    \begin{align*}
        \Omega_1 &= \{\gamma_1\neq 0\} = \R^*\times \R \\
        \Omega_2 &= \R^* \\
        \Omega_3 &= \CC^2.
    \end{align*}
    If we change the order in the Jordan-Hölder basis for \(\ii Z, Z, Y, \ii Y, X, \ii X \) instead, then the stratification is similar but \(\Omega^1\) then appears as \(\{\gamma_2\neq 0\}\). Changing the coordinates for the 4 elements of depth 1 in the Jordan-Hölder basis, we can get any first stratum that has the form \(\CC \setminus L\) where \(L\) is a real line. The second strata will then be \(L\setminus \{0\}\) and the third stratum will always be the same \(\CC^2\). In this case, the coarse stratification consists of 2 strata: \(\CC \setminus0\) and \(\CC\). A similar analysis can be done for \(H_{2n+1}^{\CC}\), and will give 3 strata again.
\end{ex}

\begin{ex}[Engel group] \label{Ex: Engel group}
    Let \(\mathbb{E}\) be the Engel group. Its Lie algebra is spanned by elements \(X,Y,Z,V\) of respective degree \(1,1,2,3\). The Lie bracket is given by \([X,Y] = Z\) and \([X,Z] = V\). In the dual basis, the only non-trivial co-adjoint actions are:
    \begin{align*}
        \Ad^*_{\exp(uX)}Z^* &= Z^* + uY^* \\
        \Ad^*_{\exp(uX)}V^* &= V^* + uZ^* + \frac{u^2}{2}Y^* \\
        \Ad^*_{\exp(sY)}Z^* &= Z^* - sX^*\\
        \Ad^*_{\exp(tZ)}V^* &= V^*-tX^*.
    \end{align*}
    Therefore, in these coordinates, the orbit of a point \((x,y,z,v) \in \R^4\) is given by:
    \[\left\lbrace \left(x-tv-sz,y+uz+\frac{u^2}{2}v,z+uv,v\right), t,s,u\in \R\right\rbrace.\]
    If \(v \neq 0\) there is a unique element of the orbit that has its first and third coordinates equal to 0. If \(v = 0, z \neq 0\) there is a unique element element of the orbit with all the coordinates except the third being equal to 0. If \(v=t=0\) the orbit is a point. These 3 cases correspond to the Pedersen stratification. We get 3 strata:
    \begin{align*}
        \Omega_1 &\cong \R^* \times \R, \\
        \Omega_2 &\cong \R^*,\\
        \Omega_3 &\cong \R^2.
    \end{align*}
\end{ex}

The fact that the last two strata for the Engel group correspond to the strata of the Heisenberg group \(H_3\) is not a coïncidence. This is because \(H_3 \cong \faktor{\mathbb{E}}{\mathcal{Z}(\mathbb{E})}\). This relation can be generalized in a recursive way with the so called model filiform Lie algebras.

\begin{ex}[Model filiform Lie algebra]\label{Ex: Filiform} Let \(L_{n}\) be the Lie algebra spanned by elements \(X,Z_0,\cdots, Z_n\) with the Lie bracket defined by \([X,Z_k] = Z_{k+1}, 0\leq  k < n\) and the others being zero. We have \(L_1 = H_3, L_2 = \mathbb{E}\). More generally we have \(L_n \cong \faktor{L_{n+1}}{\mathcal{Z}(L_{n+1})}\). 

From this relation, we get that the co-adjoint orbit for \(L_{n}\) of a point of coordinate \((\beta,\alpha_0,\cdots,\alpha_{n-1},0)\) is the coadjoint orbit for \(L_{n-1}\) of the point of coordinate \((\beta,\alpha_0,\cdots,\alpha_{n-1})\), embedded in the hyperplane of equation \(\alpha_n^* = 0\).

Therefore, by induction, \(L_{n}\) has \(n+1\) strata \(\Omega_1^{L_{n}},\cdots,\Omega_{n+1}^{L_{n}}\). They satisfy the recursion relation \(\Omega_k^{L_{n+1}} = \Omega_{k-1}^{L_n}, 1<k\leq n+2\). In particular we get that \(\Omega_{n+1}^{L_{n}} \cong \R^2\). Also for \(1<k<n+1, \Omega_k^{L_{n}} = \Omega_1^{L_{n-k+1}}\). So now we only need to compute the first stratum, i.e. the orbits for which \(\alpha_n \neq 0\).
Using \(\Ad^*_{\exp(tZ_{n-1})}Z_n^* = Z_n^*-tX^*\), we can make \(\beta\) vanish. The only action we can then use is: 
\[\Ad^*_{\exp(tX)} \colon Z_k^* \mapsto \sum_{j = 0}^k \frac{t^{k-j}}{(k-j)!}Z_{j}^*.\]
Taking \(t = -\frac{\alpha_{n-1}}{\alpha_n}\), we can get rid of \(\alpha_{n-1}\) and find a unique point in the orbit of the form
\((0,\alpha_1',\cdots,\alpha_{n-2}',0,\alpha_{n})\). Therefore we get:
\[\Omega_1^{L_{n}} \cong \R^* \times \R^{n-1}.\]
Combining with the recursion relation this gives:
\[\Omega_k^{L_n} = \R^* \times \R^{n-k}, 1\leq k \leq n.\]
\end{ex}

\section{Pedersen stratification of the Helffer-Nourrigat cone}\label{Section : Stratification HN Cone}

Let \((M,\Fb)\) be a singular Lie filtration. The Helffer-Nourrigat cone is defined as a closed subset \(\HN(\Fb)\) of the coadjoint orbits in \(\gr(\Fb)^*\). As before, we make the assumption that \(\Fb\) is globally finitely generated so we may use the Rotschild-Stein method to find a global graded Lie basis with model group \(\g\). We may then take a Pedersen stratification of \(\widehat{\g} = \sqcup_{j=1}^d\Omega_j^{\g}\). Since \(\HN(\Fb) \subset M\times \widehat{\g}\) is closed then all the sets:
\[\Omega_j := \left(M\times\Omega_j^{\g}\right)\bigcap \HN(\Fb),\]
are closed subsets of \(M \times \Omega_j^{\g}\) hence locally compact and Hausdorff. Moreover they form a partition of \(\HN(\Fb)\). 

Notice however that since we use a universal Lie algebra construction, it can be fairly big so some of the Pedersen strata may not intersect \(\HN(\Fb)\). We can get rid of this problem by getting rid of the empty strata and re-indexing the remaining ones. Another thing that may occur is that a strata \(\Omega_j\) could have an empty fiber over a point \(x \in M\) but be non-empty (so have non-empty fiber over other points). See Section \ref{Section: ExamplesCones} below for examples where this phenomenon happens. Typically in a lot of sub-riemannian settings, \(\F^1\) generates the whole tangent space by itself on an open dense subset \(M_{reg}\subset M\) and we only need to consider Lie brackets over the singular set. In that case the only non-empty strata over \(M_{reg}\) is the last one since it only consists of abelian representations. However we could have infinite dimensional representations appearing in addition over the singular set.

The sets \(\Omega_j\) defined above define a stratification of the Helffer-Nourrigat cone. However, given a point \(x \in M\), we want to relate the fibers over \(x\) to the Pedersen strata for \(\gr(\Fb)_x\). This is a consequence of the following result:

\begin{lem}
    Let \(\g,\mathfrak{h}\) be graded Lie algebras, \(\varphi\colon \g\twoheadrightarrow\mathfrak{h}\) a surjective homomorphism. Let \(X_1,\cdots,X_N\) be a Jordan-Hölder basis of \(\g\) compatible with the grading. Then there exists a Jordan-Hölder basis of \(\mathfrak{h}\) such that \(\widehat{\varphi} \colon \widehat{\mathfrak{h}} \hookrightarrow \widehat{\g}\) maps the Pedersen strata into Pedersen strata, sending different strata to different strata.
\end{lem}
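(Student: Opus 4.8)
The plan is to build the Jordan--Hölder basis of $\mathfrak{h}$ by pushing forward the given basis of $\g$ through $\varphi$, discarding redundancies, and then to check that the two combinatorial invariants controlling Pedersen strata --- the numbers $d_{j,i}$ and hence the sets $\mathcal{J}(\xi)$ --- transform in a controlled way under $\widehat{\varphi}$. First I would set $\mathfrak{g}_k = \lag X_1,\dots,X_k\rag$ and consider the images $\varphi(X_1),\dots,\varphi(X_N)$; since $\varphi$ is surjective these span $\mathfrak{h}$, and since $\varphi$ respects the grading and each $\mathfrak{g}_k$ is an ideal, each $\varphi(\mathfrak{g}_k)$ is an ideal of $\mathfrak{h}$. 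Let $k_1<k_2<\cdots<k_M$ be the indices at which $\dim\varphi(\mathfrak{g}_k)$ jumps (these indices are exactly the ones for which $X_k\notin \mathfrak{g}_{k-1}+\ker\varphi$), and set $Y_\ell := \varphi(X_{k_\ell})$. Then $(Y_1,\dots,Y_M)$ is a basis of $\mathfrak{h}$, $\mathfrak{h}_\ell := \lag Y_1,\dots,Y_\ell\rag = \varphi(\mathfrak{g}_{k_\ell})$ is an ideal, and since the $X_k$ were eigenvectors for the dilations and $\varphi$ is graded, the $Y_\ell$ are dilation eigenvectors with non-increasing weights; hence this is a Jordan--Hölder basis compatible with the grading.

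Next I would relate the dimension invariants across the embedding $\widehat{\varphi}$. Recall from the proof of the Proposition on the $d_{j,i}$ that $d_{j,i}(\xi)=\rk M_{(j,i)}(\xi)$ where $M(\xi)$ is the matrix of $(X,Y)\mapsto \lag\xi,[X,Y]\rag$ in the Jordan--Hölder basis. The key observation is the compatibility $\tsp\varphi\left(\Ad^*_{\varphi(g)}\eta\right)=\Ad^*_g\left(\tsp\varphi(\eta)\right)$ for $\eta\in\mathfrak{h}^*$, $g\in\g$ (exactly as in the proof of Proposition~\ref{Proposition : Independence HN}), so $\tsp\varphi$ is an injective, coadjoint-equivariant linear map $\mathfrak{h}^*\inj \g^*$. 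Moreover $\tsp\varphi$ intertwines the bilinear form of $\mathfrak{h}$ at $\eta$ with that of $\g$ at $\tsp\varphi(\eta)$: for all $X,Y\in\g$, $\lag\tsp\varphi(\eta),[X,Y]\rag = \lag\eta,[\varphi(X),\varphi(Y)]\rag$. Since $\varphi(\mathfrak{g}_{k_\ell})=\mathfrak{h}_\ell$ and $\varphi(\mathfrak{g}_k)=\mathfrak{h}_{\ell}$ for $k_\ell\le k<k_{\ell+1}$, this gives
\[
d^{\,\g}_{k_\ell,k_m}\!\left(\tsp\varphi(\eta)\right) = d^{\,\mathfrak{h}}_{\ell,m}(\eta),
\]
and for the intermediate indices $k_\ell\le k<k_{\ell+1}$ one has $d^{\,\g}_{k,k_m}(\tsp\varphi(\eta)) = d^{\,\g}_{k_\ell,k_m}(\tsp\varphi(\eta))$ because $\mathfrak{g}_k$ and $\mathfrak{g}_{k_\ell}$ have the same image. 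Thus the whole matrix $\left(d^{\,\g}_{j,i}(\tsp\varphi(\eta))\right)_{j,i}$ is determined by $\left(d^{\,\mathfrak{h}}_{\ell,m}(\eta)\right)_{\ell,m}$, purely by the fixed index pattern $k_1<\cdots<k_M$. Consequently the jump-set datum $\mathcal{J}^{\g}(\tsp\varphi(\eta))$ is a fixed, order-preserving function of $\mathcal{J}^{\mathfrak{h}}(\eta)$: an index $k$ lies in $J^{\g}_{k_m}(\tsp\varphi(\eta))$ iff $k=k_\ell$ for some $\ell$ and $\ell\in J^{\mathfrak{h}}_m(\eta)$.

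From this, two points $\eta,\eta'\in\mathfrak{h}^*$ have $\mathcal{J}^{\mathfrak{h}}(\eta)=\mathcal{J}^{\mathfrak{h}}(\eta')$ if and only if $\mathcal{J}^{\g}(\tsp\varphi(\eta))=\mathcal{J}^{\g}(\tsp\varphi(\eta'))$, so $\widehat{\varphi}$ sends each Pedersen stratum $\Omega^{\mathfrak{h}}_j$ into a single Pedersen stratum $\Omega^{\g}_{\sigma(j)}$ and distinct strata to distinct strata (injectivity of $\widehat{\varphi}$ was already established in Proposition~\ref{Proposition : Independence HN}, or follows again from injectivity of $\tsp\varphi$ on coadjoint orbits). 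I would also note that the induced map $\sigma$ on strata is order-preserving for the Pedersen ordering, since the translation $\mathcal{J}^{\mathfrak{h}}\mapsto\mathcal{J}^{\g}\circ\tsp\varphi$ respects the lexicographic order $\prec$ --- this is a small check using that inserting/deleting the fixed non-jump indices $k\notin\{k_1,\dots,k_M\}$ and relabeling does not change which of two tuples is smaller. The main obstacle, and the place requiring genuine care rather than formula-chasing, is the bookkeeping in the previous paragraph: one must be sure that the index pattern $k_1<\cdots<k_M$ is genuinely independent of $\eta$ (it is, being built from $\varphi$ and the fixed basis alone) and that the reconstruction of the full $d^{\g}$-matrix from the $d^{\mathfrak{h}}$-matrix via properties (1)--(5) of the Proposition is consistent with the direct formula above; once this is pinned down, everything else is formal.
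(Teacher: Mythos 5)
Your proof is correct and follows essentially the same approach as the paper: build the Jordan--H\"older basis of \(\mathfrak{h}\) by extracting the images \(\varphi(X_{k_\ell})\) at the indices where the dimension jumps, invoke the coadjoint equivariance \(\tsp\varphi\circ\Ad^*_{\varphi(g)}=\Ad^*_g\circ\tsp\varphi\), and match jump indices via the fixed index pattern. You make the key step more explicit than the paper by passing through the full \(d_{j,i}\) matrix and verifying the intermediate indices \(k_\ell\le k<k_{\ell+1}\), but the underlying argument is the same one.
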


\begin{proof}
    Define \(Y_j = \varphi(X_j), 1\leq j \leq N\). These vectors span the Lie algebra \(\mathfrak{h}\). We extract a basis the following way. Define 
    \[i_1 = \min\{i|Y_i \neq 0\},\]
    and then inductively:
    \[i_{k+1} = \min\{i | Y_i \notin \Span(Y_{i_1},\cdots, Y_{i_k})\},\]
    until we get a basis for some \(i_n\).
    The vectors \(Y_{i_1},\cdots, Y_{i_n}\) form a Jordan-Hölder basis of \(\mathfrak{h}\), compatible with the filtration.

    We now want to relate jump indices for the two bases when we apply \(\tsp\varphi\). To do that define:
    \begin{align*}
        u \colon \{1,\cdots,N\} &\to \{1,\cdots,n\} \\
                     i          &\mapsto \max\{k| i_k \leq i\}
    \end{align*}
    It is a right inverse to \(\sigma \colon k \mapsto i_k\). We have \(\varphi(\g_i) = \mathfrak{h}_{u(i)}\) for all \(1\leq i \leq N\). Recall also that if \(\xi \in \mathfrak{h}^*\) and \(g \in \g\) then:
    \[\tsp\varphi\left(\Ad^*_{\varphi(g)}(\xi)\right) = \Ad^*_g(\tsp\varphi(\xi)),\]
    i.e. \(\tsp\varphi\) induces a homeomorphism between the \(\mathfrak{h}\)-co-adjoint orbit of \(\xi\) and the \(\g\)-co-adjoint orbit of \(\tsp\varphi(\xi)\).
    Consequently for all \(\xi \in \mathfrak{h}^*\) and \(1\leq i \leq N\) we have:
    \[J^i\left(\tsp(\varphi(\xi)\right) = \sigma\left(J^{u(i)}(\xi)\right)\]
    We therefore map Pedersen strata to Pedersen strata, injectively between the strata.
\end{proof}

Using this result we have proved:

\begin{thm}
    Let \((M,\Fb)\) be a finitely generated singular filtration. We fix generators and let \(\g\) be a global Lie basis.
    Let \(\Omega_j \subset \HN(\Fb), 1\leq j \leq d\) be the non-empty sets obtained by intersecting \(\HN(\Fb)\) with the Pedersen strata. Then the sets \(U_j=\cup_{k\leq j}\Omega_k\) are open and \(U_d = \HN(\Fb)\), i.e. the sets\(\Omega_j\) form a partition of \(\HN(\Fb)\).

    Moreover if \(x \in M_{reg}\), the intersections \(\Omega_j\cap \HN(\F)_x\) form a Pedersen stratification of \(\gr(\Fb)_x\). If \(x\in M\setminus M_{reg}\) then the intersections \(\Omega_j\cap \HN(\F)_x\) are closed subsets of a Pedersen stratification of \(\gr(\Fb)_x\).
\end{thm}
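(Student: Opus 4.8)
The plan is to deduce the whole statement from the preceding Lemma, from Pedersen's theorem, and from the fact that \(\HN(\Fb)\) sits as a closed subspace of \(M\times\widehat{\g}\); no new computation should be needed.

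First I would settle the global structure. By Pedersen's theorem the increasing unions \(W_k^{\g}=\bigcup_{\ell\le k}\Lambda_\ell^{\g}\) are Zariski open in \(\g^*\), hence the \(U_k^{\g}=\bigcup_{\ell\le k}\Omega_\ell^{\g}\) are open in \(\widehat{\g}\) and the \(M\times U_k^{\g}\) are open in \(M\times\widehat{\g}\). As \(\HN(\Fb)\) carries the subspace topology and the \(\Omega_\ell^{\g}\) partition \(\widehat{\g}\), the sets \(\HN(\Fb)\cap(M\times\Omega_\ell^{\g})\) partition \(\HN(\Fb)\); deleting the empty ones and re-indexing produces the \(\Omega_1,\dots,\Omega_d\), which is already the asserted partition. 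For the openness of \(U_j=\bigcup_{k\le j}\Omega_k\) I would note that if \(\Omega_j\) comes from the \(\widehat{\g}\)-stratum of index \(\ell_j\), then no \(\widehat{\g}\)-stratum of index \(\le\ell_j\) other than the ones producing \(\Omega_1,\dots,\Omega_j\) meets \(\HN(\Fb)\); hence \(U_j=\HN(\Fb)\cap(M\times U_{\ell_j}^{\g})\) is open, and since no stratum meets \(\HN(\Fb)\) except those producing \(\Omega_1,\dots,\Omega_d\), we get \(U_d=\HN(\Fb)\).

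Next I would turn to the fibers. Fix \(x\in M\). The map \(\gr(\beta)_x\colon\g\twoheadrightarrow\gr(\Fb)_x\) is a surjective homomorphism of graded Lie algebras, and the chosen basis of \(\g\) is a grading-compatible Jordan–Hölder basis; so the preceding Lemma supplies a grading-compatible Jordan–Hölder basis of \(\gr(\Fb)_x\) for which the topological embedding \(\widehat{\gr(\beta)_x}\colon\widehat{\gr(\Fb)_x}\hookrightarrow\widehat{\g}\) of Proposition~\ref{Proposition : Independence HN} sends each Pedersen stratum \(\Omega_k^{\gr(\Fb)_x}\) of \(\gr(\Fb)_x\) into a single stratum \(\Omega_{j(k)}^{\g}\), with \(k\mapsto j(k)\) injective. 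Because the \(\Omega_k^{\gr(\Fb)_x}\) already partition \(\widehat{\gr(\Fb)_x}\) and distinct Pedersen strata of \(\widehat{\g}\) are disjoint, this upgrades to \(\widehat{\gr(\beta)_x}^{-1}(\Omega_j^{\g})=\Omega_k^{\gr(\Fb)_x}\) when \(j=j(k)\) and \(=\varnothing\) otherwise. Identifying \(\HN(\Fb)_x\) with its image \(\HN_{\g}(\Fb)_x\subset\widehat{\g}\) under \(\widehat{\gr(\beta)_x}\) (Proposition~\ref{Proposition : Independence HN}) then gives
\[
\Omega_j\cap\HN(\Fb)_x=\begin{cases}\Omega_k^{\gr(\Fb)_x}\cap\HN(\Fb)_x & \text{if }j=j(k),\\ \varnothing & \text{otherwise.}\end{cases}
\]
For \(x\in M_{reg}\), where \(\Fb\) restricts to a regular filtration, one has \(\HN(\Fb)_x=\widehat{\gr(\Fb)_x}\) (cf.\ the remark on regular filtrations, or \cite{AMYMaximalHypoellipticity}), so the non-empty \(\Omega_j\cap\HN(\Fb)_x\) are exactly the \(\Omega_k^{\gr(\Fb)_x}\), i.e.\ a full Pedersen stratification of \(\gr(\Fb)_x\). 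For \(x\notin M_{reg}\), \(\HN_{\g}(\Fb)\) is closed in \(M\times\widehat{\g}\), hence \(\HN_{\g}(\Fb)_x\) is closed in \(\widehat{\g}\) and \(\HN(\Fb)_x\) is closed in \(\widehat{\gr(\Fb)_x}\); thus each \(\Omega_j\cap\HN(\Fb)_x\) is a closed subset of a Pedersen stratum of \(\gr(\Fb)_x\).

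I expect the only genuinely load-bearing point to be the step from the inclusion \(\widehat{\gr(\beta)_x}(\Omega_k^{\gr(\Fb)_x})\subset\Omega_{j(k)}^{\g}\) provided by the Lemma to the preimage identity \(\widehat{\gr(\beta)_x}^{-1}(\Omega_{j(k)}^{\g})=\Omega_k^{\gr(\Fb)_x}\); this relies on the source strata forming a partition and on the fact that the fine Pedersen strata of \(\gr(\Fb)_x\) land in \emph{distinct} target strata (precisely where the fine rather than coarse stratification matters). Everything else — the re-indexing, the subspace-topology arguments, the transport along \(\widehat{\gr(\beta)_x}\), and the regular/singular dichotomy — is routine, modulo the standard fact that over a regular point the Helffer–Nourrigat cone is the full unitary dual.
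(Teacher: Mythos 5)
Your proposal is correct and takes essentially the same approach as the paper, which proves the theorem by combining the preceding Lemma on Jordan–Hölder bases of quotients with the observation that \(\HN(\Fb)\) is a closed subset of \(M\times\widehat{\g}\) whose intersections with the \(\g\)-strata then inherit the openness, partition, and local-compactness properties. You have simply filled in the routine details (the re-indexing argument for openness, the preimage identity upgrading the Lemma's inclusion to an equality, and the regular/singular fiber dichotomy via the fact that the Helffer–Nourrigat cone over a regular point is the full unitary dual) that the paper leaves implicit after the Lemma.
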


\begin{rem}
    It could happen that the Helffer-Nourrigat cone gives the whole set of representations of the osculating groupoids, even in the singular set. In that case the intersections \(\Omega_j\cap \HN(\F)_x\) form a Pedersen stratification of \(\gr(\Fb)_x\) regardless of \(x\). See Example \ref{Ex: Martinet} below for a case where it happens.
\end{rem}

With this decomposition at hand, we can replicate the result of Corollary \ref{Cor: Solvability group}. This time the algebra to consider is not directly \(C^*(\gr(\Fb))\) as it contains too much representations. Its spectrum is given by \(\widehat{\gr(\Fb)}\) so we can consider the closed subset given by \(\HN(\Fb)\). It corresponds to a quotient of \(C^*(\gr(\Fb))\) denoted by \(C^*(\T\F)\). This \(C^*\)-algebra plays a key role in the work \cite{AMYMaximalHypoellipticity} where it appears as the right algebra to consider at \(t = 0\) in \(\mathfrak{a}\F\) to avoid artifacts coming from the discontinuity. More precisely, some elements of the fiber at \(t = 0\) of \(C^*(\mathfrak{a}\Fb)\) appear as elements that can be extended to \(a \in C^*(\mathfrak{a}\Fb)\) with \(\forall t> 0, a_t =0\). The \(C^*\)-algebra \(C^*(\T\F)\) is obtained by modding out the ideal of these very discontinuous elements. See \cite[Theorem 2.20]{AMYMaximalHypoellipticity} and also \cite{MohsenBlowupFoliation} where it appears in a more conceptual and general way.

\begin{thm}
    Let \(\Omega_1,\cdots,\Omega_d\subset \HN(\Fb)\) be the Pedersen strata of the Helffer-Nourrigat cone and \(U_k = \cup_{j\leq k} \Omega_j, k\geq 0\).
    Let \(J_k \triangleleft C^*(\T\F)\) be the ideal corresponding to the open subset \(U_k\). Then we get a sequence of nested ideals:
    \[0 = J_0 \triangleleft J_1 \triangleleft \cdots \triangleleft J_d = C^*(\T\F),\]
    such that the subquotients satisfy:
    \[\forall k \geq 1, \faktor{J_k}{J_{k-1}} \cong \CCC_0\left(\Omega_k,\mathcal{K}_k\right).\]
    The algebra \(\mathcal{K}_k\) is the one of compact operators on a separable Hilbert space, of infinite dimension if \(k < d\) and dimension 1 if \(k = d\).
\end{thm}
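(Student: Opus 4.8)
The plan is to deduce this from Corollary~\ref{Cor: Solvability group} by exhibiting \(C^*(\T\F)\) as a quotient of the \emph{trivial} field of group \(C^*\)-algebras \(\CCC_0(M)\otimes C^*(\g)\) over \(M\), and then transporting the single-group Pedersen filtration through that quotient. I keep the Rothschild--Stein model group \(\g\) of Section~\ref{Subsection:Rotschild-Stein}, so \(\HN(\Fb)\) is a closed subset of \(M\times\widehat\g\) and, after discarding empty ones, \(\Omega_k=(M\times\Omega^{\g}_{j(k)})\cap\HN(\Fb)\) for \(1\le k\le d\), where \(\Omega^{\g}_1\prec\dots\) are the Pedersen strata of \(\widehat\g\) (top one the character stratum \([\g,\g]^{\perp}\)) and \(j\) is injective and order-preserving. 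First, by Theorem~\ref{Theorem:Stratification} the \(U_k\) are open in \(\HN(\Fb)=\widehat{C^*(\T\F)}\) with \(U_0=\emptyset\) and \(U_d=\HN(\Fb)\); the bijection between open subsets of a primitive ideal space and ideals then yields the nested \(J_0=0\triangleleft\dots\triangleleft J_d=C^*(\T\F)\), and \(J_k/J_{k-1}\), regarded as an ideal of \(C^*(\T\F)/J_{k-1}\) (whose spectrum is \(\HN(\Fb)\setminus U_{k-1}\)), corresponds to the open subset \(U_k\setminus U_{k-1}=\Omega_k\). Hence \(\widehat{J_k/J_{k-1}}=\Omega_k\) is locally compact Hausdorff; it remains to identify this subquotient as an algebra.

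The key step is to build, out of the map \(\widetilde{\beta}\) of Section~\ref{Subsection:Rotschild-Stein} specialized at \(t=0\), a \(*\)-epimorphism
\[q\colon\ \CCC_0(M)\otimes C^*(\g)\ =\ \CCC_0\bigl(M,C^*(\g)\bigr)\ \twoheadrightarrow\ C^*(\T\F),\]
assembled from the fibrewise surjections \(\gr(\beta)_x\colon\g\twoheadrightarrow\gr(\Fb)_x\), whose dual map of spectra is exactly the closed embedding \(\HN(\Fb)\hookrightarrow M\times\widehat\g\). Then \(L:=\ker q\) is the ideal of \(\CCC_0(M,C^*(\g))\) corresponding to the open set \((M\times\widehat\g)\setminus\HN(\Fb)\), and for an ideal \(I\triangleleft\CCC_0(M,C^*(\g))\) corresponding to an open \(W\subseteq M\times\widehat\g\), the ideal \(q(I)=(I+L)/L\) of \(C^*(\T\F)\) corresponds to \(W\cap\HN(\Fb)\).

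Now transport Corollary~\ref{Cor: Solvability group}. Let \(J^{\g}_0=0\triangleleft\dots\triangleleft C^*(\g)\) be the Pedersen ideals, with \(J^{\g}_\ell/J^{\g}_{\ell-1}\cong\CCC_0(\Omega^{\g}_\ell,\mathcal K^{\g}_\ell)\) and \(\mathcal K^{\g}_\ell\) infinite-dimensional except for the top index, where it is \(\CC\). The ideal \(I_\ell:=\CCC_0(M)\otimes J^{\g}_\ell\) corresponds to \(M\times U^{\g}_\ell\), and exactness of \(\CCC_0(M)\otimes(-)\) gives \(I_\ell/I_{\ell-1}\cong\CCC_0(M\times\Omega^{\g}_\ell,\mathcal K^{\g}_\ell)\). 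Fix \(k\), write \(\ell=j(k)\). Because the group strata \(\Omega^{\g}_{\ell'}\) with \(\ell'<\ell\) that are \emph{not} of the form \(\Omega^{\g}_{j(\cdot)}\) are disjoint from \(\HN(\Fb)\) (they are precisely the ones killed in the re-indexing), \((M\times U^{\g}_{\ell-1})\cap\HN(\Fb)=U_{k-1}\) and \((M\times U^{\g}_{\ell})\cap\HN(\Fb)=U_k\); so \(q(I_{\ell-1})=J_{k-1}\) and \(q(I_\ell)=J_k\). Since \(I_{\ell-1}\subset I_\ell\), the modular law together with the open-set/ideal dictionary identifies \(J_k/J_{k-1}\) with the quotient of \(I_\ell/I_{\ell-1}\cong\CCC_0(M\times\Omega^{\g}_\ell,\mathcal K^{\g}_\ell)\) by the ideal of the open set \((M\times\Omega^{\g}_\ell)\setminus\HN(\Fb)\), that is
\[J_k/J_{k-1}\ \cong\ \CCC_0\bigl((M\times\Omega^{\g}_\ell)\cap\HN(\Fb),\ \mathcal K^{\g}_\ell\bigr)\ =\ \CCC_0(\Omega_k,\mathcal K_k),\qquad \mathcal K_k:=\mathcal K^{\g}_{j(k)}.\]
Finally \(\mathcal K_k\) is infinite-dimensional unless \(j(k)\) is the top index, which --- since \(M\times\{0\}\subset\HN(\Fb)\) meets the top stratum and \(j\) is order-preserving --- happens iff \(k=d\), giving \(\mathcal K_d=\CC\).

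The only genuinely delicate point is the construction of \(q\): one must verify that the fibrewise Rothschild--Stein epimorphisms assemble into a \emph{continuous} \(C^*\)-surjection \(\CCC_0(M)\otimes C^*(\g)\twoheadrightarrow C^*(\T\F)\) with kernel precisely the ideal of \((M\times\widehat\g)\setminus\HN(\Fb)\); this is where the closedness of \(\HN(\Fb)\) in \(M\times\widehat\g\) and the behaviour of \(\gr(\beta)_x\) across the jumps of \(\dim\gr(\Fb)_x\) must be used. Granting that, everything else --- the ideal tower, exactness of \(\CCC_0(M)\otimes(-)\), and the modular-law bookkeeping --- is formal and rests on Corollary~\ref{Cor: Solvability group}.
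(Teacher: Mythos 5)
Your argument is correct and follows the same route as the paper: transport the single-group Pedersen ideal tower of Corollary~\ref{Cor: Solvability group} through the surjection $\CCC_0(M)\otimes C^*(\g)\twoheadrightarrow C^*(\T\F)$ corresponding to the closed embedding $\HN(\Fb)\hookrightarrow M\times\widehat{\g}$, then re-index the non-empty strata. You actually spell out the modular-law bookkeeping and the identification $j(d)=D$ (via $M\times\{0\}\subset\HN(\Fb)$) more explicitly than the paper does, and the "delicate point" you flag --- that $C^*(\T\F)$ is precisely the quotient of $\CCC_0(M)\otimes C^*(\g)$ corresponding to $\HN(\Fb)$ --- is exactly what the paper cites as established in \cite{AMYMaximalHypoellipticity} and Section~\ref{Subsection:Rotschild-Stein}.
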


\begin{proof}
    We use the global graded Lie basis \(\g\) with its Jordan-Hölder basis. Using Corollary \ref{Cor: Solvability group} we get ideals:
    \[0 = \widetilde{J}_0 \triangleleft \widetilde{J}_1 \triangleleft \cdots \triangleleft \widetilde{J}_D = C^*(\g),\]
    such that the subquotients satisfy:
    \[\forall k \geq 1, \faktor{\widetilde{J}_k}{\widetilde{J}_{k-1}} \cong \CCC_0\left(\widetilde{\Omega}_k,\mathcal{K}_k\right).\]
    Here the \(\widetilde{\Omega}_k, 1\leq k \leq D\) are the Pedersen strata of \(\g\). 
    
    The \(C^*\)-algebra \(C^*(\T\F)\) is a quotient of \(\CCC_0(M) \otimes C^*(\g)\) corresponding to the closed subset \(\HN(\F) \subset M \times \widehat{\g}\). Consider the ideal \(J_k \triangleleft C^*(\T\F)\) obtained as the image of \(\widetilde{J}_k\) under this quotient map. These are the same ideals as stated in the theorem except that they are allowed to repeat themselves (recall that the \(\Omega_k\) correspond to the \(\Omega_k\cap \HN(\F)\) that are non-empty, here we also consider the ones that are potentially empty, we will re-index them at the end). Then \(\faktor{J_k}{J_{k-1}}\) is a quotient of \(\faktor{\widetilde{J}_k}{\widetilde{J}_{k-1}}\). By construction this quotient corresponds to the closed subset \(\Omega_k = \widetilde{\Omega}_k \cap \HN(\Fb)\) so we have:
    \[\faktor{J_k}{J_{k-1}} \cong \CCC_0(\Omega_k,\mathcal{K}_k).\]
    We then re-index the ideals \(J_k\) to avoid repeating the same ones, this corresponds to keeping only the non-empty \(\Omega_k\), and we get the result.
\end{proof}

\section{Examples of Pedersen stratification for the Helffer-Nourrigat cone}\label{Section: ExamplesCones}

In this section we give examples of Helffer-Nourrigat cones and their Pedersen stratifications. In the equiregular case, we know from INSERT RESULT that the Helffer-Nourrigat cone is the whole unitary dual. The Pedersen stratification then restricts over each point to a Pedersen stratification of the group corresponding to the fiber.

\begin{ex}[Contact manifolds]
    Let \(M\) be a contact manifold with contact distribution \(H\). For \(\F^1 = \Gamma(H)\) have \(\F^2 = [\Gamma(H),\Gamma(H)] = \mathfrak{X}(M)\). Then \(\gr(\Fb) = H \oplus \faktor{TM}{H}\), the osculating groups are all isomorphic to the Heisenberg group. Using the decomposition from Example \ref{Ex: Heisenberg group} we get:
    \begin{align*}
        \Omega_1 &= H^{\perp}\\
        \Omega_2 &= H^*
    \end{align*}
\end{ex}

The next example shows that even in the equiregular case and when all the osculating groups are isomorphic, the Pedersen strata do not necessarily form a nice fiber bundle over the base manifold.

\begin{ex}[Complex contact manifold]
    Let \(X\) be a complex manifold. A complex contact structure is the data of a complex subbundle \(H\subset TX\) of complex codimension  1. If we write \(\theta \colon TX \to \faktor{TX}{H}=:L\) then we need the map \(\mathrm{d}\theta_H \colon \Lambda^2H \to L\) to be everywhere non-degenerate. We obtain an equiregular Lie filtration of depth 2. The osculating groups form a bundle of complex Heisenberg groups. The flat orbits in this case are parameterized by \(L^*\setminus M\) and the characters by \(H^*\). However in general the vector bundle \(L^*\) is not trivial, not even of the form \(L_0\otimes \CC\) for a real line bundle \(L_0\). If we take for instance the standard complex contact structure on \(\mathbb{CP}^{2n+1}\), the corresponding line bundle is \(\mathcal{O}_{\mathbb{CP}^{2n+1}}(2)\). This bundle is non-trivial in general and every real line bundle over \(\mathbb{CP}^{2n+1}\) is trivial so \(L\) cannot be obtained from a line bundle either.

    Regardless of this issue we can perform the Pedersen decomposition by choosing local trivializations of \(X\) with complex Darboux coordinates. The second stratum will, in general, not be a real line bundle however (hence the first stratum is not the complement of a real line bundle either).

    In this case it is much more comfortable to work with the coarse stratification. With it we get 2 strata: the flat orbits, identified with \(L^*\setminus M\), and the characters, identified with \(H^*\).
\end{ex}

\begin{ex}[Parabolic homogeneous spaces]
    Let \(G\) be a semi-simple Lie group, \(P\) a parabolic subgroup. We can assume that \(\g = \oplus_{j = -k}^k \g_j\) with \([\g_i,\g_j] \subset \g_{i+j}\) for every indices \(-k\leq i,j\leq k\), and such that \(\p = \oplus_{j = 0}^k\). Then the homogeneous space \(\faktor{G}{P}\) is an equiregular filtered manifold. We have \(T\left(\faktor{G}{P}\right) = G \times_P \faktor{\g}{\p}\) so the filtration comes from the one of \(\faktor{\g}{\p}\) as a vector space. We have \(\gr(\Fb) = G\times_P \g_-\) where \(\g_- = \oplus_{j = -k}^{-1}\g_j\) is the opposed nilpotent Lie algebra. The adjoint action of \(P\) on \(\g\) preserves \(\g_-\) and its graduation. Therefore, let \(\Omega_1^{\g_-},\cdots,\Omega_d^{\g_-}\) be the Pedersen strata of the group \(\g_-\). We have:
    \[\forall 1\leq i \leq d, \Omega_i = G \times_P \Omega_i^{\g_-}.\]
    This example can be extended to manifolds having a parabolic geometry in the sense of Cartan (i.e. they locally look like this homogeneous space), see e.g. \cite{DaveHallerBGG}. However similar problems as in the previous examples could arise in that case and prevent the Pedersen strata to form fiber bundles.
\end{ex}

\begin{ex}[Engel manifold]
    An Engel manifold is a 4-dimensional manifold \(M\) endowed with a distribution \(\F^1\) of constant rank \(2\) such that \(\F^2 = \F^1 + [\F^1,\F^1]\) has constant rank \(3\) and \(\F^3 = \F^2 + [\F^1,\F^2] = \mathfrak{X}(M)\). This is an example of equiregular sub-riemannian structure.

    For an Engel manifold, all the osculating groups are isomorphic to the Engel group \(\mathbb{E}\) and they form a bundle of groups. There is a characteristic line field \(L \subset F^1\) which is uniquely determined by the property \([\Gamma(L),\F^2] \subset \F^2\). Taking the notations from Example \ref{Ex: Engel group}, the characteristic line field on the Engel group would be generated by \(Y\). Globalizing the decomposition for the Engel group we get the strata:
    \begin{align*}
        \Omega^1 &= (F^2)^{\perp} \oplus (L\setminus M) \\
        \Omega_2 &= \left(\faktor{F^2}{F^1}\right)^{\perp} = (F^1)^{\perp_{F^2}} \\
        \Omega_3 &= (F^1)^*
    \end{align*}
\end{ex}

The following examples now present singularities.

\begin{ex}[Martinet distribution] \label{Ex: Martinet}
    Consider the Martinet distribution on \(M = \R^3\) with \(\F^1 = \lag \partial_x + \frac{y^2}{2}\partial_z,\partial_y\rag\). We have \(\F^2 = \F^1 + \lag y\partial_z\rag\) and \(\F^3 = \mathfrak{X}(\R^3)\). We then have \(\gr(\Fb)_{(x,y,z)} = H_3\) if \(y\neq 0\) and \(\gr(\Fb)_{(x,0,z)} = \mathbb{E}\). We have \(M_{reg} = \{(x,y,z), y\neq 0\}\) so \(\HN(\Fb)_{(x,y,z)} = \widehat{H_3}\) if \(y\neq 0\). 
    Take \(\mathbb{E} = \Span(X,Y,Z,V)\), and consider the map \(\Phi_{p,t} \colon \mathbb{E} \to T_p\R^3\) that sends \(X\) to \(t\left(\partial_x + \frac{y^2}{2}\partial_z\right)\), \(Y\) to \(t\partial_y\), \(Z\) to \(t^2y\partial_z\), \(V\) to \(t^3\partial_z\) and then evaluates at \(p\).
    We have \(\ker(\Phi_{p,t}) = \Span((0,0,-t,y))\)  with \(p=(x,y,z)\). Thus 
    \[\im(\tsp{}\Phi_{p,t}) = \{\alpha \in \mathbb{E}^*, t\alpha_3 -y\alpha_4=0\}.\] 
    Therefore for \(y=0\), we can approach \((p,0)\) by a sequence of points \((p_n,t_n), n\in \N\) with \(y_n/t_n\) approaching any given slope. Therefore we get \(\HN(\Fb)_{(x,y,z)} = \widehat{\mathbb{E}}\) when \(y=0\). 

    Using the Pedersen strata of \(\mathbb{E}\) from Example \ref{Ex: Engel group}, we get:
    \begin{align*}
        \Omega_1 &= \{(x,y,z) \in M, y=0\}\times \R^*\times \R \\
        \Omega_2 &= M \times \R^* \\
        \Omega_3 &= M \times \R^2.
    \end{align*}
\end{ex}

\begin{ex}[n-Grushin plane] Consider the \(n\)-Grushin distribution generated by \(\F^1 = \lag \partial_x, \frac{x^n}{n!}\partial_y\rag\) on \(M = \R^2\). We have:
\(\gr(\Fb)_{(x,y)} =\begin{cases}
    \R^2, x\neq 0\\
    L_n, x=0
\end{cases}\)
    where \(L_n = \lag X,Y_0,\cdots,Y_n\rag\) is the model filiform Lie algebra from Example \ref{Ex: Filiform}. We get linear maps \(\Phi_{p,t}\colon L_n \to T_pM\) sending \(X\) to \(t\partial_x\), \(Y_k\) to \(\frac{t^{k+1}x^{n-k}}{(n-k)!}\partial_y\) and evaluating at \(p\). We have:
    \[\ker(\Phi_{p,t}) = \Span \left( \left( 0,\cdots,0,t^{n-k},0,\cdots,0,-\frac{x^{n-k}}{(n-k)!}\right), 0\leq k \leq n-1\right).\]
    Thus:
    \[\im(\tsp{}\Phi_{p,t}) = \left\lbrace(\beta,\alpha)\in L_n^*, \forall 0\leq k < n, t^{n-k}\alpha_k = \frac{x^{n-k}}{(n-k)!}\alpha_n\right\rbrace .\]
    As in the previous example, taking a sequence \((p_n,t_n), n\in \N\) going to \((p,0)\) with \(p =(0,y)\), the limit of \(x_n/t_n\) dictates the limit space in the grassmanian topology. If the limit is infinite then we obtain the space \(\{(\beta,\alpha)\in L_n^*, \alpha_n = 0\}\) which corresponds to the union of the \(\Omega_i^{L_n}, i> 1\). If \(x_n\sim_{n\to \infty}\lambda t_n\) then the limit space becomes:
    \[\left\lbrace (\beta,\alpha)\in L_n^*, \forall 0\leq k < n, \alpha_k = \frac{\lambda^{n-k}}{(n-k)!}\alpha_n\right\rbrace .\]
    The image in the first Pedersen stratum is characterized by the intersection of the coadjoint orbit with \(\{\beta = \alpha_{n-1} = 0\}\). Here this forces all the \(\alpha_k, 0\leq k \leq n-1\) to vanish. Therefore:
    \[\HN(\Fb)_{(0,y)}\cap \Omega^{L_n}_1 = \R^* \times\{0\} \subset \R^* \times \R^{n-1}.\]
    Combining everything we get:
    \begin{align*}
        \Omega_1 &= \{(x,y)\in M, x=0\} \times \R^* \\
        \Omega_k &= \{(x,y)\in M, x=0\} \times \R^* \times \R^{n-k}, 1 <k < n+1\\
        \Omega_{n+1} &= M \times \R^2.
    \end{align*}
\end{ex}

In all these examples the strata are always the same for points in the regular set or its complement. However we can produce situations where the dimension of the osculating group is not locally constant within the singular set.

\begin{ex}
    Consider \(M = \R^3\) with the distribution 
    \[\F^1 = \lag \partial_x - xy\partial z,\partial_y\rag.\] 
    This time we have two different situations on the singular set:
    \[\gr(\Fb)_{(x,y,z)} = \begin{cases}
        H_3 , xy \neq 0\\
        \mathbb{E}, xy=0, (x,y)\neq(0,0)\\
        L_{6,21(-1)}, x=y=0
    \end{cases}.\]
    Here \(L_{6,21(-1)} = \Span(X,Y,Z,X',Y',Z')\) satisfies the relations (the terminology is from \cite{deGraafClassification}):
    \begin{align*}
        [X,Y] = Z, [X,Z] = X', [X,Y'] = Z' \\
        [Z,Y] = Y', [X',Y] = Z'.
    \end{align*}
    This algebra has depth 4. What happens on the singular set away from \((0,0,z)\) is similar to the Martinet case so we have \(\HN(\Fb)_{(x,y,z)} = \widehat{\mathbb{E}}\) in that case. For the points \((0,0,z)\), a quick computation shows that the Helffer-Nourrigat cone is the image of:
    \[\{(a,b,c,\alpha,\beta,\gamma)\in L_{6,21(-1)}^*, \alpha\beta = c\gamma\}.\]
    Let us see to which representations this set corresponds. We first describe the Pedersen strata in the coordinates \((a,b,c,\alpha,\beta,\gamma)\) given by the dual basis.

    The first stratum corresponds to \(\gamma \neq 0\), each of these orbits intersect the plane \(\lag Z^*,Z'^*\rag\) exactly once so \(\Omega_1^{L_{6,21(-1)}} = \R^* \times \R\). The second stratum corresponds to \(\gamma =0, \beta\neq 0\). All these orbits intersect the space \(\lag X^*,X'^*,Y'^*\rag\) at a single point so \(\Omega_2^{L_{6,21(-1)}} = \R^* \times \R^2\). The next stratum corresponds to \(\gamma = \beta = 0, \alpha \neq 0\). All these orbits have a unique intersection with \(\lag Y^*,X'^*\rag\) so \(\Omega_3^{L_{6,21(-1)}} = \R^* \times \R\). The next one corresponds to \(\gamma = \beta = \alpha = 0, c\neq 0\), all these orbits meet the line \(\lag Z^* \rag\) exactly once (at \((0,0,c,0,0,0)\)) so \(\Omega_4^{L_{6,21(-1)}} = \R^*\). The last stratum corresponds to all the points \((a,b,0,0,0,0)\) that are all stationary. Therefore \(\Omega_5^{L_{6,21(-1)}} = \R^2\). 
    Now for the Helffer-Nourrigat cone, we need to take the intersection with the cone \(\alpha\beta = c\gamma\). When \(\gamma \neq 0\), we can take the representative for the orbit of the form \((0,0,c,0,0,\gamma)\). Since \(\gamma \neq 0\) this forces \(c=0\). When \(\gamma = 0\), this forces \(\alpha = 0\) or \(\beta = 0\). This first case intersects the second stratum in a proper subset and gives the whole fourth and fifth stratum. The second case gives in addition the whole third stratum. Combining all these results we have:
    \begin{align*}
        \Omega_1 &= \{(0,0,z)\in M\} \times \R^*, \\
        \Omega_2 &= \{(0,y,z) \in M\} \times \R^* \times \R,\\
        \Omega_3 &= \{(x,0,z) \in M\} \times \R^* \times \R,\\
        \Omega_4 &= M \times \R^*,\\
        \Omega_5 &= M \times \R^2.
    \end{align*}
\end{ex}
We can see in the last example that some strata are only concentrated on parts of the singular set. To explain this, notice that \(L_{6,21(-1)}\) has two different Engel groups as quotient:
\[\faktor{L_{6,21(-1)}}{\lag Y',Z'\rag} \cong \mathbb{E} \cong \faktor{L_{6,21(-1)}}{\lag X',Z'\rag}.\]
The first one corresponds to the Engel group appearing as \(\gr(\Fb)_{(0,y,z)}\) with \(y\neq 0\) and the second one as \(\gr(\Fb)_{(x,0,z)}\) with \(x\neq 0\).

Remember that for a given graded group \(\g\), the strata Pedersen strata \(\Omega_i\) can all be identified to algebraic subsets of \(\g^*\). In the previous examples, the intersection with the Helffer-Nourrigat cone stayed algebraic as well. Of course if the vector fields used to define the filtration are not analytic, this has no reason to stay true. This is however not even the case when the vector fields are analytic.

\begin{ex}[\cite{AMYMaximalHypoellipticity}, Example 1.11.d]
    Let \(M = \R^2\) and consider the filtration of step \(2\) given by the distributions:
    \begin{align*}
        \F^1 &= \lag x^2y^4\partial_x,x^6\partial_x, x^4y^2\partial_x, y^6\partial_x\rag,\\
        \F^2 &= \lag \partial_x,\partial_y\rag.
    \end{align*}
    The osculating groups are all abelian and we have
    \[\gr(\Fb)_{(x,y)} = \begin{cases}
        \R \oplus \R, (x,y)\neq 0\\
        \R^4 \oplus \R^2, (x,y)=(0,0)
    \end{cases}.\]
    Since all the groups are abelian, the whole Helffer-Nourrigat cone consists of only one Pedersen stratum. We have:
    \begin{align*}
        \HN(\Fb) &= \{(x,y)\in M, (x,y)\neq(0,0)\} \times \R^2 \\
        & \sqcup \{(0,0)\}\times \{(\xi,\eta)\in \R^5\oplus \R| \xi_1^3 = \xi_2\xi_4^2, \xi_3^3 = \xi_2^2\xi_4, \xi_i\xi_5\geq 0 \forall i\}.
    \end{align*}
\end{ex}

In the last example, the Helffer-Nourrigat cone is still semi-algebraic. It is not clear if this result holds in general. That is, if the singular Lie filtration is analytic (e.g. \(M = \R^n\) and a filtration given by vector fields with polynomial coefficients), can the Pedersen strata be chosen to be semi-algebraic ?

\section{Pseudodifferential operators on filtered manifolds}

In their paper \cite{AMYMaximalHypoellipticity}, Androulidakis, Mohsen and Yuncken introduce a class of pseudodifferential operators on a filtered manifold \((M,\Fb)\). This calculus is made so that sections of \(\F^j\) define differential operators of order at most \(j\). Moreover, they associate to polynomials in these vector fields a principal symbol, a family of operators in the representations of \(\HN(\Fb)\). They show that the invertibility of this principal symbol in all the non-trivial representations (i.e. \(\HN_0(\Fb)\)) characterizes hypoellipticity for the operator. An important step in their proof is to understand the \(C^*\)-algebra \(\Psi_{\Fb}(M)\), obtained by completion of the algebra of operators of order \(0\) in their calculus. This algebra contains the compact operators on \(L^2(M)\) and sits in the exact sequence:
\[\xymatrix{0 \ar[r] & \mathcal{K} \ar[r] & \Psi_{\Fb}(M) \ar[r] & \Sigma_{\Fb}(M) \ar[r] & 0.}\]
The compact operators here appear as the completion of the algebra of negative order operators in the calculus. The quotient map in the exact sequence corresponds to the principal symbol map. The goal of this section is to show:

\begin{thm}
    Let \((M,\Fb)\) be a singular Lie filtration. There is a natural homeomorphism:
    \[\widehat{\Sigma_{\Fb}(M)} \cong \faktor{\HN_0(\Fb)}{\R^*_+}.\]
\end{thm}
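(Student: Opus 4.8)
The plan is to avoid analysing the irreducible representations of $\Sigma_{\Fb}(M)$ directly, and instead to compute its spectrum from the concrete model of the symbol algebra supplied by the calculus of \cite{AMYMaximalHypoellipticity}. Recall that the order-$0$ calculus is built from the zoom action of $\R^*_+$ on the adiabatic deformation, and that, modulo the operators of negative order (which complete to $\mathcal{K}(L^2(M))$), the principal symbol realizes $\Sigma_{\Fb}(M)$ inside the multiplier algebra of $C^*(\T\F)$ as the closure of the space of symbols that are $\delta$-homogeneous of degree $0$. The first and most substantial step is to turn this into a clean algebraic statement: that $\Sigma_{\Fb}(M)$ is canonically $*$-isomorphic to the generalized fixed-point algebra, in the sense of Rieffel's theory of proper actions, for the zoom action of $\R^*_+$ on the ideal $J \triangleleft C^*(\T\F)$ cut out by the open, $\delta$-invariant subset $\HN_0(\Fb)$ of $\HN(\Fb)$; equivalently, that $\Sigma_{\Fb}(M)$ is Morita equivalent to the crossed product $J \rtimes_\delta \R^*_+$. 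It is essential here to work with $\HN_0(\Fb)$ rather than $\HN(\Fb)$: the trivial representation of each osculating group is the unique $\delta$-fixed point, and excising the corresponding ``zero section'' is exactly what makes the remaining action free and proper, hence the quotient well behaved. Extracting this identification from \cite{AMYMaximalHypoellipticity} --- together with the statement that under it the principal symbol of an operator $P$ becomes the field $\pi \mapsto \overline{\pi}(\sigma_{\Fb}(P))$ of its values in the representations of the cone, where $\overline{\pi}$ is the extension of $\pi$ to multipliers --- is the main obstacle; everything afterwards is soft.

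Granting this, I would argue as follows. By Kirillov theory \cite{Kirillov}, $C^*(\gr(\Fb))$ is liminal (CCR), with spectrum $\widehat{\gr(\Fb)} \cong \gr(\Fb)^*/\Ad^*(\gr(\Fb))$; since $C^*(\T\F)$ is the quotient of $C^*(\gr(\Fb))$ associated with the closed invariant subset $\HN(\Fb)$, it is again CCR with spectrum $\HN(\Fb)$, and the ideal $J$ associated with the open invariant subset $\HN_0(\Fb)$ is CCR with spectrum $\HN_0(\Fb)$. As recorded in Section~\ref{Section : Stratification HN Cone}, the zoom action $\delta$ of $\R^*_+$ is free and proper on $\HN_0(\Fb)$. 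By the Green--Rieffel machinery, for a free and proper action of the amenable group $\R^*_+$ on a $C^*$-algebra whose induced action on the primitive ideal space is free and proper, the crossed product has primitive ideal space the orbit space and Jacobson topology equal to the quotient topology; Morita invariance of the spectrum then yields a homeomorphism $\widehat{\Sigma_{\Fb}(M)} \cong \HN_0(\Fb)/\R^*_+$. Unwinding the Morita equivalence, the irreducible representation attached to a class $[\pi]$ is realized on $\mathcal{H}_\pi$ by $\rho_\pi(P) = \overline{\pi}(\sigma_{\Fb}(P))$, i.e. by ``evaluating the principal symbol at the point $\pi$ of the cone''; this is well defined on $\R^*_+$-orbits precisely because order-$0$ symbols are $\delta$-invariant, so the homeomorphism obtained is the natural one, and its independence of the generators used to build the Rothschild--Stein group $\g$ is inherited from Proposition~\ref{Proposition : Independence HN}.

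Two cautions. Because $\HN_0(\Fb)/\R^*_+$ is in general not Hausdorff --- only its Pedersen strata are, by Theorem~\ref{Theorem:Stratification} --- ``spectrum'' must be read throughout as the primitive ideal space with its Jacobson topology, and one must check that each step (passage from $C^*(\gr(\Fb))$ to $C^*(\T\F)$ to $J$, the crossed product, the Morita equivalence) is compatible with this possibly non-Hausdorff topology; as every algebra in sight is CCR, only the lattice of primitive ideals is involved and this is routine. One could instead try a direct proof: define $\HN_0(\Fb)/\R^*_+ \to \widehat{\Sigma_{\Fb}(M)}$ by $[\pi] \mapsto [\rho_\pi]$ with $\rho_\pi$ as above, check that each $\rho_\pi$ is irreducible (using that $\pi$ is, and that the symbol algebra contains enough finite-rank operators, such as spectral projections of sub-Laplacians), that it depends only on the orbit, and that distinct orbits give inequivalent representations (the symbols separating $\R^*_+$-orbits in $\HN_0(\Fb)$); but surjectivity --- every irreducible representation of $\Sigma_{\Fb}(M)$ arising this way --- is then the delicate point, and is most cleanly obtained through the crossed-product picture above. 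Note that this theorem is logically independent of Theorem~\ref{Theorem:Stratification}; the stratification enters only afterwards, to convert this homeomorphism into the composition series of Theorem~\ref{Theorem:Solvability}.
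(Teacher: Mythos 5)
Your proposal is correct in outline but takes a genuinely different route from the paper. The paper does not attempt to realize $\Sigma_{\Fb}(M)$ itself as a generalized fixed-point algebra for the zoom action on $C^*_0(\T\F)$. Instead it works one level up, with the ``free'' symbol algebra $\Sigma(M\times\g)\cong\CCC_0(M)\otimes\Sigma(\g)$ built from a global Rothschild--Stein graded basis: the spectrum of $\Sigma(\g)$ for a single graded group, namely $(\widehat{\g}\setminus 0)/\R^*_+$, is imported from \cite{FermanianFischerGradedLie} and \cite[Theorem~5.8]{CrenFilteredCrossedProduct} (where the Rieffel/crossed-product picture you invoke is indeed the engine, but only at the level of a single group), and $\Sigma_{\Fb}(M)$ is then defined as the quotient of $\Sigma(M\times\g)$ corresponding to the closed invariant set $\HN_0(\Fb)/\R^*_+$; the spectrum of a quotient is automatic, and independence of $\g$ is Proposition~\ref{Proposition : Independence HN}. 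The advantage of that route is that the only nontrivial $C^*$-input is a result about graded groups that is already in the literature, and the Helffer--Nourrigat constraint becomes a soft quotient step. Your route is more intrinsic (it sidesteps the auxiliary group $\g$ entirely and works directly with $C^*_0(\T\F)$), and it has the aesthetic bonus that the Morita equivalence $\Sigma_{\Fb}(M)\otimes\mathcal K\cong C^*_0(\T\F)\rtimes_\delta\R^*_+$ appears as the starting point rather than, as in the paper, a corollary obtained afterwards from the Pedersen stratification and Takai duality.

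The one place where you are a little optimistic is the claim that your main step --- the identification of $\Sigma_{\Fb}(M)$ with a generalized fixed-point algebra or, equivalently, the Morita equivalence with $C^*_0(\T\F)\rtimes_\delta\R^*_+$ --- can simply be ``extracted from \cite{AMYMaximalHypoellipticity}''. AMY realize $\Sigma_{\Fb}(M)$ as a concrete subalgebra of $\CCC_0(M)\otimes M(C^*_0(\g))$ via the $\sigma^0$-map, not as a fixed-point algebra for the zoom action on $C^*_0(\T\F)$, so the Rieffel-proper structure and the imprimitivity bimodule still have to be produced. For a single graded group this is done in \cite{FermanianFischerGradedLie,CrenFilteredCrossedProduct}; you would need to upgrade it to the $C^*(\T\F)$-level, i.e.\ to a possibly singular field of groups together with the Helffer--Nourrigat constraint, which is not a routine citation. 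Everything else you say --- CCR-ness at each stage, the Green--Williams identification of $\mathrm{Prim}(J\rtimes_\delta\R^*_+)$ with the orbit space for a free and proper action of the amenable group $\R^*_+$, Morita invariance of the primitive ideal space, the need to read ``spectrum'' as $\mathrm{Prim}$ with its Jacobson topology because the quotient is not Hausdorff, and the logical independence from Theorem~\ref{Theorem:Stratification} --- is correct and matches the paper's point of view.
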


Given this result, we can use the Pedersen stratification to prove the solvability of \(\Sigma_{\Fb}(M)\) and \(\Psi_{\Fb}(M)\) (with explicit subquotients).

We start by recalling the construction of the pseudodifferential calculus from \cite{AMYMaximalHypoellipticity}. As in Subsection \ref{Subsection:Rotschild-Stein}, and in order to simplify the exposition, we make the assumption that the filtration is finitely generated and denote by \(\g\) the universal Lie group generated by a choice of generators, and \(\beta \colon \g \to \mathfrak{X}(M)\) the corresponding linear map.

For \(\lambda > 0\), denote by \(\alpha_{\lambda}\) the action on \(\g \times M \times \R_+^*\) given by 
\[\alpha_{\lambda}(X,x,t) := (\delta_{\lambda}(X), x,\lambda^{-1}t).\]

Consider the partially defined map 
\begin{align*}
    \ev \colon \g\times M \times \R_+^* &\to M \times M \times \R_+ \\
    (X,x,t) &\mapsto (\exp_x(\beta(\delta_{\lambda}(X))),x,t),
\end{align*}   
with the convention \(\delta_0 = 0\). This map is defined on a neighborhood of \(\g \times M \times \{0\} \cup \{0\} \times M \times \R_+^*\). We can restrain such a neighborhood to an open set \(\mathbb{U}\) still containing the previous set and such that \(\ev \colon \mathbb{U} \to M \times M \times \R_+\) is a smooth submersion and is \(\R^*_+\)-equivariant (we take \(\U\) to be stable under this action). The quadruple \((\g,\beta,\mathbb{U},M)\) is then a graded basis in the sense of \cite{AMYMaximalHypoellipticity}. The open set \(\mathbb{U}\) and the map \(\ev\) serve as a bi-submersion  for the holonomy groupoid of the adiabatic foliation \(\mathfrak{a}\F\) near the units. Our assumption on the filtration being finitely generated is equivalent to the existence of a global graded basis. In full generality, the reasoning of this section can also be applied to any singular Lie filtration by covering the manifold with graded bases.

We now introduce the basic ideas from the calculus of \cite{AMYMaximalHypoellipticity}. To avoid choosing measures and having scaling factors from the dilations, we work with half-densities. 

Consider the maps \(r,s \colon \U \to M\times \R_+\) defined by 
\begin{align*}
    s(X,x,t) &= (x,t), \\
    r(X,x,t) &= (\exp_x(\beta(\delta_t(X))),t)
\end{align*}

These maps give \(\U\) the structure of a bi-submersion for the adiabatic foliation \(\adiab\F\). One can go further and endow \(\U\) with a local groupoid structure corresponding to the one of the holonomy groupoid of the adiabatic foliation. It interpolates between the pair groupoid structure on \(M\times M \times \{t\}\) for \(t>0\) and the group structure on the fibers of \(\gr(\Fb)\) at \(t=0\) (here given through the bigger group \(\g\)). We then define \(\hd_{r,s} = \Omega^{1/2}\ker(\diff s) \otimes \Omega^{1/2}\ker(\diff r)\) (here \(\Omega^{s} E\) denotes the bundle of \(s\)-densities of a vector bundle \(E\), \(s\in \R\)) . A quick computation shows that here \(\hd_{r,s} \cong \Omega^1\g\). If we denote by \(\ev_t \colon \U \to M\) the composition of \(\ev\) and the evaluation at \(t > 0\), we get a submersion, hence:
\[\ev_{t*} \colon \CCC^{\infty}_c(\U,\hd_{r,s}) \to \CCC^{\infty}_c(M\times M,\hd).\]
 Here \(\hd\) is the half-densities bundle on the pair groupoid \(M \times M \rr M\).

 For a vector bundle \(E\to M\), we denote by \(\mathcal{D}'(M,E)\) the topological dual of \(\CCC^{\infty}_c(M,E^*\otimes \Omega^1TM)\). This way we get the inclusion \(\CCC^{\infty}(M,E) \subset \mathcal{D}'(M,E)\).

\begin{mydef}
    For \(k \in \Z\), let \(\mathcal{E}^{'k}(\U)\) be the subset of distributions \(u\in \mathcal{D}'(\U,\hd_{r,s})\) such that:
    \begin{enumerate}
        \item \(u\) is transverse to \(s \colon \U \to M\times \R_+\) in the sense of \cite{AndroulidakisSkandalisHolonomy}, which means that if \(f \in \CCC^{\infty}(\U,\Omega^{-1/2}_{r,s}\otimes \Omega^1\ker(\diff s))\), then \(s_*(fu) \in \CCC^{\infty}(M\times\R_+)\).
        \item For any \(\lambda>0\) we have \(\alpha_{\lambda*}u-\lambda^ku\in\CCC^{\infty}_c(\U,\hd_{r,s})\).
        \item \(u\) is properly supported in time, i.e. \(\supp(u)\to \R_+\) is a proper map.
    \end{enumerate}
\end{mydef}

From the space \(\mathcal{E}^{'k}(\U)\) we get two type of maps:
\begin{align*}
    \ev_{t*} &\colon \mathcal{E}^{'k}(\U) \to \mathcal{D}'(M\times M,\hd), t > 0,\\
    \ev_{x,0} & \colon \mathcal{E}^{'k}(\U) \to \mathcal{D}'(\gr(\Fb)_x,\Omega^1), x\in M.
\end{align*}

\begin{mydef}
    The space of pseudodifferential operators of order \(k\in \Z\) is defined as:
    \[\Psi^k_{\Fb}(M):= \ev_{1*}\left(\mathcal{E}^{'k}(\U)\right) + \CCC^{\infty}_c(M\times M,\hd)\]
\end{mydef}

The definition given in \cite{AMYMaximalHypoellipticity} is slightly different but coincides with the one given here when in the presence of a global graded basis. In particular it can be shown to be independent of the choice of a graded basis (i.e. of the choice of \(\g\) here).

We now want to define the principal symbol of such an operator. The idea is to take an operator \(P \in \Psi^k_{\Fb}(M)\) then find a global lift \(u \in \mathcal{E}^{'k}(\U)\) (meaning that \(\ev_{1*}(u)-P\) is smooth). We then produce from \(u\) a function on \(M\times (\g^*\setminus \{0\})\), homogeneous of degree \(k\). From this function, we construct a (family of) multiplier of a certain subalgebra of \(C^*(\g)\) which we can evaluate at the non-trivial representations of \(\g\).
The goal is then to make sure that this construction does not depend on any choice. This is not true in general for all the representations, but at least for the ones in the Helffer-Nourrigat cone \(\HN_0(\Fb)\).

The construction of the homogeneous function is a result due to Taylor \cite[Proposition 2.4]{Taylor} for a single graded group, the bundle version below is from \cite[Proposition 3.4]{AMYMaximalHypoellipticity}. 

\begin{prop}
    Let \(k\in \Z\) and \(u \in \mathcal{E}^{'k}(\U)\). There is a unique smooth function \(A_u \in \CCC^{\infty}(\g^*\times M \times \R_+ \setminus (\{0\}\times M\times \{0\}))\) satisfying:
    \begin{enumerate}
        \item \(A_u\) is homogeneous of degree \(k\), i.e.
\begin{align*}
    \forall \lambda >0, \forall (\eta,x,t) \in \g^*\times M \times \R_+ \setminus (\{0\}\times M\times \{0\}),\\ A_u(\tsp\delta_{\lambda}(\eta),x,\lambda t) = \lambda^kA_u(\eta,x,t).
\end{align*}        
        \item There exists \(K\subset M\) compact such that \(\supp(A_u) \subset \g^*\times K\times \R_+\).
        \item If \(\chi \in \CCC^{\infty}_c(\g^*\times \R_+)\) is equal to \(1\) in a neighborhood of \((0,0)\) and we set:
        \[f(X,x,t) := u(X,x,t) - \int_{\eta\in\g^*}\exp(\ii \lag\eta,X\rag)(1-\chi)(\eta,t)A_u(\eta,x,t),\]
        then \(f\in \CCC^{\infty}(\g\times M\times\R_+)\). Moreover there exists a compact subset \(K \subset M\) and a positive number \(\varepsilon>0\) such that \(\supp(f) \subset \g \times K \times [0;\varepsilon]\). Finally \(f\) and all its derivatives in \(x,t\) are Schwartz on \(\g\), uniformly in \(x,t\).
    \end{enumerate}
    This function is called the full symbol of \(u\). Conversely, for every smooth function \(A\) satisfying the first two conditions, there is a distribution \(u\in\mathcal{E}^{'k}(\U)\) with \(A_u = A\).
\end{prop}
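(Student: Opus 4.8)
The plan is to pass to the fibrewise Fourier transform along \(\g\) and then invoke the single-group result of Taylor (\cite[Prop.~2.4]{Taylor}, cf.\ \cite[Prop.~3.4]{AMYMaximalHypoellipticity}) with \((x,t)\in M\times\R_+\) treated as smooth parameters. Using \(\hd_{r,s}\cong\Omega^1\g\) and the transversality of \(u\) to \(s\), one first regards \(u\) as a smooth family \((x,t)\mapsto u_{x,t}\) of \(\Omega^1\g\)-valued distributions on \(\g\) (compactly supported in \(\g\) once \(t>0\), as \(\U\) shrinks towards \(0\in\g\) there), and writes \(\widehat u(\eta,x,t):=\widehat{u_{x,t}}(\eta)\) for their fibrewise Fourier transforms, which for \(t>0\) are smooth in all variables and of at most polynomial growth in \(\eta\). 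Because \(u\) is valued in \(1\)-densities, the Jacobian of \(\delta_\lambda\) is absorbed, so the homogeneity condition (2) in the definition of \(\mathcal{E}^{'k}(\U)\) becomes, after Fourier transform, the exact functional equation
\[\widehat u(\tsp\delta_\lambda\eta,x,\lambda t)=\lambda^k\,\widehat u(\eta,x,t)+S_\lambda(\eta,x,t),\qquad S_\lambda:=\widehat{g_\lambda},\quad g_\lambda:=\alpha_{\lambda*}u-\lambda^ku\in\CCC^{\infty}_c,\]
where each \(S_\lambda\) is Schwartz in \(\eta\), uniformly in \((x,t)\), and vanishes once \(t\) exceeds the compact \(t\)-support of \(g_\lambda\).

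For existence I would define \(A_u\) as the homogeneous leading part of \(\widehat u\). Fix \(\mu>1\), set \(h(\lambda):=\lambda^{-k}\widehat u(\tsp\delta_\lambda\eta,x,\lambda t)\), and read off from the functional equation that \(h(\mu\lambda)-h(\lambda)=(\mu\lambda)^{-k}S_\mu(\tsp\delta_\lambda\eta,x,\lambda t)\). Along \(\lambda=\mu^n\) these increments vanish for \(n\) large as soon as \(t>0\) (then \(\mu^n t\) leaves the \(t\)-support of \(g_\mu\)), so \(A_u:=\lim_n h(\mu^n)\) is simply \(h(\mu^n)\) for \(n\) large; at \(t=0\) the increments are bounded by \(C_N(\mu^n|\eta|)^{-N}\), hence summable for \(\eta\neq0\) and \(N\) large, the resulting series converging in \(\CCC^\infty\) (differentiation costs powers of \(\mu^n\) beaten by the Schwartz decay). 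That the remainder, i.e.\ \(u\) minus the oscillatory integral of \((1-\chi)A_u\), is then smooth and uniformly Schwartz along \(\g\) — condition (3) — is precisely Taylor's theorem applied to the slice \(u_{x,0}\) on the fixed group \(\g\), with \(x\) as a smooth parameter; the genuinely new input is the behaviour at \(t=0\). One then checks that \(A_u\) is independent of \(\mu\), that it is homogeneous of degree \(k\) for the joint dilation \((\eta,t)\mapsto(\tsp\delta_\lambda\eta,\lambda t)\), that restricting its defining series to \(t=1\) leaves only finitely many terms, each smooth up to \(\eta=0\), so that propagating by homogeneity \(A_u\) extends smoothly across \(\{\eta=0,\,t>0\}\) and is thus defined and smooth exactly on \(\g^*\times M\times\R_+\setminus(\{0\}\times M\times\{0\})\); and that the compact \(M\)-support of \(A_u\) and the support bound for the remainder follow from those of \(u\) and of the \(g_\lambda\).

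For uniqueness, suppose \(A\) and \(A'\) both satisfy (1)--(3); then \(D:=A-A'\) is homogeneous of degree \(k\), and subtracting the two instances of (3) and Fourier-transforming in \(X\) shows that \((1-\chi)(\eta,t)D(\eta,x,t)\) is Schwartz in \(\eta\) with seminorms uniform in \((x,t)\). Evaluating joint homogeneity at a large parameter \(\lambda\) (so \(\chi(\cdot,\lambda t)\equiv 0\)) gives \(|\lambda^kD(\eta,x,t)|=|(1-\chi)(\tsp\delta_\lambda\eta,\lambda t)\,D(\tsp\delta_\lambda\eta,x,\lambda t)|\le C_N(\lambda|\eta|)^{-N}\) for \(\eta\neq0\), forcing \(D\equiv0\) off \(\{\eta=0\}\) and hence everywhere on the domain by continuity. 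For the converse, given smooth \(A\) obeying (1)--(2) I would set \(u:=\int_{\g^*}\exp(\ii\langle\eta,X\rangle)(1-\chi)(\eta,t)A(\eta,x,t)\,d\eta\); this oscillatory integral is a distribution smooth away from \(X=0\), transverse to \(s\), and properly supported in time (with \(\chi\) chosen accordingly), while \(\alpha_{\lambda*}u-\lambda^ku\) is the oscillatory integral of a symbol supported in a bounded set of \(\eta\) (the mismatch of the two cut-offs), hence smooth and compactly supported; so \(u\in\mathcal{E}^{'k}(\U)\), and \(A_u=A\) by the uniqueness just proved.

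The step I expect to be the main obstacle is the parametric analysis as \(t\to0^+\): there \(u_{x,t}\), a pseudodifferential kernel on \(M\) for \(t>0\), degenerates to a homogeneous convolution kernel on the osculating group \(\gr(\Fb)_x\) at \(t=0\), and one must control — uniformly in \(x\) and down to \(t=0\) — both the number of non-vanishing terms in the homogenising series (growing like \(\log(1/t)\)) and the Schwartz seminorms of the remainder. This is exactly where the compact \(t\)-support of the defects \(g_\lambda\) and Taylor's single-group estimate do the essential work; the remainder of the argument is bookkeeping of smooth, uniform dependence on the parameters.
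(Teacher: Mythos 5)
The paper does not actually prove this proposition; it states it with citations to Taylor (\cite[Prop.~2.4]{Taylor}) for a single graded group and Androulidakis--Mohsen--Yuncken (\cite[Prop.~3.4]{AMYMaximalHypoellipticity}) for the bundle version, so there is no in-text argument to compare against. Your reconstruction of the forward direction follows what is, as far as I can tell, the telescoping-homogenization scheme used in those references: Fourier transform along the \(\g\)-fibres, convert the approximate homogeneity of \(u\) into an exact functional equation with a Schwartz defect \(S_\mu\), define \(A_u\) as the limit of \(\lambda^{-k}\widehat u(\tsp\delta_\lambda\eta,x,\lambda t)\) along a geometric sequence, and observe that the increments are \(\CCC^\infty\)-summable for \(\eta\neq 0\) (Schwartz decay of the defect) and eventually vanish for \(t>0\) (proper time support of the defect). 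The forward direction and the uniqueness argument are sound, and match the structure of the cited proofs.

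The converse, however, contains a small but genuine gap. You claim that \(\alpha_{\lambda*}u-\lambda^k u\), being the inverse Fourier transform of a symbol supported in a bounded set of \(\eta\) (the cutoff mismatch), is "smooth and compactly supported." The first half is right, the second is not: the inverse Fourier transform of a compactly supported function is Schwartz (indeed entire) in \(X\) but never compactly supported. More broadly, the naive oscillatory integral \(u_0(X,x,t)=\int_{\g^*}e^{\ii\langle\eta,X\rangle}(1-\chi)(\eta,t)A(\eta,x,t)\,d\eta\) is a distribution on all of \(\g\times M\times\R_+\); condition~(2) on \(A\) controls the \(M\)-support but says nothing about \(t\)- or \(X\)-support. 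To land in \(\mathcal{E}^{'k}(\U)\) you need to multiply \(u_0\) by a cutoff \(\psi\) equal to \(1\) on a neighbourhood of \(\g\times K\times\{0\}\cup\{0\}\times K\times\R_+\), with \(\supp\psi\subset\U\) and with proper time support, and then check that \(u_0-\psi u_0\) is smooth (because \(1-\psi\) is supported away from the singular locus \(X=0,\ t=0\) of \(u_0\)) and that \(\alpha_{\lambda*}(\psi u_0)-\lambda^k\psi u_0\in\CCC^\infty_c(\U,\hd_{r,s})\). Writing "\(\chi\) chosen accordingly" does not do this, since \(\chi\) is a cutoff in \((\eta,t)\), not in \((X,t)\); the physical-space cutoff \(\psi\) is a separate, necessary ingredient. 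This is routine bookkeeping but should be made explicit.
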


Given such a total symbol, we can restrict it to the fiber \(\g\times M \times \{0\}\) (this corresponds to the total symbol of the original distribution restricted to \(\mathbb{U}_{|t=0} = M\times \g\)). The resulting function is homogeneous of degree \(k\) for the pullbacks by \(\tsp\delta\). Let us now work with \(A\in \CCC^{\infty}(\g^*\times M\setminus (\{0\}\times M))\), homogeneous of degree \(k\). Let \(\mathscr{S}_0(\g, \Omega^1\g)\) be the algebra of Schwartz functions on \(\g\) whose (euclidean) Fourier transform is flat at \(0 \in \g^*\). It is an algebra for the group convolution product of \(\g\). For a fixed \(x \in M\) we get a multiplier of \(\mathscr{S}_0(\g,\Omega^1\g)\):
\[f \in \mathscr{S}_0(\g,\Omega^1\g) \mapsto \widecheck{A}(\cdot,x) \ast f \in \mathscr{S}_0(\g,\Omega^1\g).\]
According to \cite[Proposition 2.2]{CGGP}, this map is well defined and continuous. We denote by \(\sigma^k(A,x)\) this multiplier or \(\sigma^k(u,x)\) for \(A = A_u\).

For a representation \(\pi\) of \(\g\), we denote by \(L^2(\pi)\) the underlying Hilbert space and \(\CCC^{\infty}(\pi)\) the subspace of smooth vectors. 
If \(\pi\in \widehat{\g}\) is non-trivial, we define \(\sigma^k(A,x,\pi)\) by:
\[\sigma^k(A,x,\pi)(\pi(f)\xi) = \pi(\sigma^k(A,x)f)\xi, f\in \mathscr{S}_0(\g,\Omega^1\g), \xi \in L^2(\pi).\]
This is well defined and gives a linear map \(\sigma^k(A,x,\pi) \colon \CCC^{\infty}(\pi) \to \CCC^{\infty}(\pi)\). This principal symbol is compatible with the convolution product of distributions. If \(v\in \mathcal{E}^{'\ell}(M\times \g)\) then:
\[\sigma^k(u,x,\pi)\sigma^{\ell}(v,x,\pi) = \sigma^{k+\ell}(u\ast v, x,\pi).\]
Similarly with the adjoint:
\[\sigma^k(u,x,\pi) = \sigma^{\bar{k}}(u^*,x,\pi)^*.\]

Before stating the next result, recall that \(\mathcal{S}_0(\g,\Omega^1\g)\) is a dense subalgebra of the \(C^*\)-algebra \(C^*_0(\g)\) defined as the kernel of the trivial representation in \(C^*(\g)\) (which itself is a completion of \(\CCC^{\infty}_c(\g,\Omega^1\g)\)).

\begin{thm}[\cite{AMYMaximalHypoellipticity}, Theorem 3.18] Let \(u\in \mathcal{E}^{'k}(M\times \g)\) with \(\Re(k)= 0\) then \(\sigma(u,x)\) extends to a multiplier of \(C^*_0(\g)\). Moreover for every non-trivial \(\pi\in \hat{\g}\), \(\sigma^0(u,x,\pi)\) extends to a bounded operator on \(L^2(\g)\) and
\[\sup_{x\in M}\| \sigma(u,x)\|_{C^*_0(\g)} = \sup_{x\in M, \pi\in\widehat{\g}\setminus\{0\}} \|\sigma(u,x,\pi)\|_{\mathcal{B}(L^2(\pi))}<+\infty\]
Finally we have \(\sigma^0(u) \colon x \mapsto \sigma(u,x) \in \CCC_c(M)\otimes M(C^*_0(\g))\).
\end{thm}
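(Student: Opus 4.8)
The plan is to carry everything over to the fixed homogeneous group \(\g\): restrict the full symbol to \(t=0\), recognise the operators \(\sigma^0(u,x)\) as left-convolution Calder\'on--Zygmund operators on \(\g\), prove that they are bounded on \(L^2(\g)\) uniformly in \(x\), and then read off the \(C^*\)-algebraic statements from soft facts about multiplier algebras together with the amenability of \(\g\). Concretely, I would first restrict the full symbol of the preceding Proposition to \(t=0\), obtaining \(A := A_u(\cdot,\cdot,0) \in \CCC^{\infty}(\g^*\times M \setminus (\{0\}\times M))\), homogeneous of degree \(k\) for \(\tsp\delta_\lambda\) and supported in \(\g^*\times K\) for a fixed compact \(K\subset M\). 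Since \(\Re(k)=0\), each \(A(\cdot,x)\) is bounded, so its Euclidean inverse Fourier transform has the form \(\widecheck A(\cdot,x) = c(x)\,\delta_0 + \mathrm{p.v.}\,\kappa_x\) with \(\kappa_x \in \CCC^{\infty}(\g\setminus\{0\})\) homogeneous of degree \(-Q-k\) (\(Q\) the homogeneous dimension of \(\g\)) and satisfying the usual sphere-cancellation condition. By \cite[Proposition 2.2]{CGGP}, \(\sigma^0(u,x)\) is left convolution by \(\widecheck A(\cdot,x)\) on \(\mathscr S_0(\g,\Omega^1\g)\); it commutes with right translations, and since \(u^*\in\mathcal{E}^{'\bar k}(M\times\g)\) again has \(\Re(\bar k)=0\), its formal adjoint \(\sigma^0(u,x)^* = \sigma^0(u^*,x)\) is of the same kind.

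The analytic heart — and the step I expect to be the main obstacle — is to show that left convolution by a kernel \(c(x)\delta_0 + \mathrm{p.v.}\,\kappa_x\) of this shape extends to a bounded operator on \(L^2(\g)\) whose norm is controlled by finitely many Calder\'on--Zygmund seminorms of \(\kappa_x\) (plus \(|c(x)|\)). This is exactly Calder\'on--Zygmund theory on homogeneous groups, in the form used in \cite{CGGP} (ultimately going back to Knapp--Stein and Kor\'anyi--V\'agi). The point that needs care is \emph{uniformity in \(x\)}: the relevant Calder\'on--Zygmund data of \(\kappa_x\) are dominated by a fixed finite set of symbol seminorms of \(A(\cdot,x)\), and these are bounded uniformly over \(x\) because \(A\) is smooth and has compact support in the \(M\)-variable. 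This produces \(C_u := \sup_{x\in M}\|\sigma^0(u,x)\|_{\mathcal{B}(L^2(\g))} < \infty\).

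Given this, the \(C^*\)-algebraic conclusions are formal. Since \(\g\) is nilpotent, hence amenable, the left regular representation \(\lambda\) is faithful on \(C^*(\g)=C^*_r(\g)\); restricting it to the ideal \(C^*_0(\g)=\ker(\mathrm{triv})\) yields an isometric nondegenerate representation of \(C^*_0(\g)\) on \(L^2(\g)\). Moreover \(\g\) is CCR, so \(\pi(C^*(\g))=\mathcal{K}(L^2(\pi))\) for every irreducible \(\pi\). The operator \(T := \sigma^0(u,x)\) lies in \(\lambda(\g)''\) because it commutes with right translations, and for \(f\in\mathscr S_0\) one has \(T\lambda(f) = \lambda(\widecheck A(\cdot,x)*f) = \lambda(Tf)\) with \(Tf\in\mathscr S_0 \subset C^*_0(\g)\); by density of \(\mathscr S_0\) in \(C^*_0(\g)\) and the bound \(C_u\), this gives \(T\lambda(a)\in\lambda(C^*_0(\g))\) for all \(a\), and applying the same to \(T^*=\sigma^0(u^*,x)\) and taking adjoints gives \(\lambda(a)T\in\lambda(C^*_0(\g))\); hence \(\sigma^0(u,x)\in M(C^*_0(\g))\). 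For the norm identity, the irreducible representations of \(C^*_0(\g)\) are precisely the nontrivial \(\pi\in\widehat\g\), each extending uniquely to \(\bar\pi\colon M(C^*_0(\g))\to\mathcal{B}(L^2(\pi))\); the defining formula \(\sigma^0(A,x,\pi)(\pi(f)\xi)=\pi(\sigma^0(A,x)f)\xi\) says precisely that \(\bar\pi(\sigma^0(u,x)) = \sigma^0(u,x,\pi)\), so in particular the latter is bounded. Since \(\pi(C^*_0(\g))=\mathcal{K}(L^2(\pi))\), the standard computation \(\|T\|_{M(C^*_0(\g))} = \sup_{\|a\|\le1}\|Ta\| = \sup_\pi\|\bar\pi(T)\|\) gives \(\|\sigma^0(u,x)\|_{C^*_0(\g)} = \sup_{\pi\ne 0}\|\sigma^0(u,x,\pi)\|\); taking the supremum over \(x\) and using \(C_u<\infty\) yields the displayed equality, with both sides finite.

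Finally, \(\sigma^0(u,x)=0\) for \(x\notin K\) by the support property of \(A\), giving compact support. For the strict continuity of \(x\mapsto\sigma^0(u,x)\), I would first check that \(x\mapsto\widecheck A(\cdot,x)*f\) is continuous into \(\mathscr S_0\) for each fixed \(f\in\mathscr S_0\) (this is contained in the cited constructions), then approximate a general \(a\in C^*_0(\g)\) by such \(f\) in \(C^*\)-norm and invoke the uniform bound \(C_u\) in a three-\(\varepsilon\) argument to conclude that \(x\mapsto\sigma^0(u,x)a\) and \(x\mapsto a\,\sigma^0(u,x)\) are norm-continuous. This places \(\sigma^0(u)\) in \(\CCC_c(M)\otimes M(C^*_0(\g))\) in the stated sense. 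The only non-routine ingredient in this whole scheme is the uniform \(L^2\)-estimate of the second paragraph; everything else is manipulation of the orbit-free constructions already in place together with general \(C^*\)-theory.
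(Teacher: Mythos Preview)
Your proposal is sound, but you should know that the paper does not actually prove this theorem: its entire argument is a citation to \cite[Theorem~3.18]{AMYMaximalHypoellipticity} for the multiplier property, the norm identity, and the continuity, together with the one-line remark that compact support in \(x\) follows from the compact support of \(A_u\) along \(M\). What you have written is therefore considerably more than what the paper itself supplies.

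Your sketch follows the expected route and is presumably close to what the cited reference does: restrict the full symbol to \(t=0\), recognise \(\sigma^0(u,x)\) as left convolution by a Calder\'on--Zygmund kernel on \(\g\), get uniform \(L^2\)-boundedness from \cite{CGGP} together with the smoothness and compact \(M\)-support of the symbol, and then read off the \(C^*\)-statements from amenability and the CCR property. Two small points deserve a word of justification if you write this out in full. First, the nondegeneracy of \(\lambda|_{C^*_0(\g)}\) on \(L^2(\g)\), which you use to extend \(T\) to a genuine multiplier via the regular representation, holds because the trivial representation has Plancherel measure zero, but this is not entirely automatic. Second, in the norm identity \(\|T\|_{M(C^*_0(\g))}=\sup_{\pi}\|\bar\pi(T)\|\), the inequality \(\geq\) uses that each \(\pi\) maps \(C^*_0(\g)\) \emph{onto} \(\mathcal{K}(L^2(\pi))\) isometrically on the quotient, so that one can realise compacts of norm close to \(1\) as \(\pi(a)\) with \(\|a\|\) close to \(1\); this is exactly the CCR hypothesis you invoke. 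With those two points made explicit, the argument is complete.
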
 

\begin{proof}
The multiplier aspect and the norm relations are the content of Theorem 3.18 of \cite{AMYMaximalHypoellipticity} as well as the continuity aspect in the last part. The compact support comes from the fact that \(u\) is compactly supported along \(M\) thus so is \(A_u\).
\end{proof}

We can then form the \(C^*\)-algebra \(\Sigma(M\times \g)\) by taking the closure of the subalgebra formed by the \(\sigma^0(u), u\in \mathcal{E}^{'0}\) inside of \(\CCC_0(M)\otimes M(C^*_0(\g))\). This algebra is isomorphic to \(\CCC_0(M) \otimes \Sigma(\g)\) where \(\Sigma(\g)\) is defined analogously, using \(\mathcal{E}^{'0}(\g)\) instead. From \cite[Proposition 5.6]{FermanianFischerGradedLie} (see also \cite[Theorem 5.8]{CrenFilteredCrossedProduct}) we get that the evaluations of symbols at points \((x,\pi)\in M \times (\widehat{\g}\setminus 0)\) define irreducible representations of \(\Sigma(M\times \g)\) and all of them arise that way. Moreover two such representations are equivalent to one another if and only if they are on the same orbit for the inhomogeneous dilations \((\delta_{\lambda})_{\lambda >0}\). Consequently we get:
\[\widehat{\Sigma(M\times \g)} \cong M \times \faktor{(\widehat{g}\setminus 0)}{\R^*_+}.\]

Going back to the filtered calculus, not all these representation make sense for the principal symbol of an operator in \(\Psi^k_{\Fb}(M)\). Indeed if \(u\in \mathcal{E}^{'k}(\U)\) is the global lift of a given operator, we want to only consider the elements \(x\in M, \pi \in \widehat{\g}\) for which \(\sigma(u,x,\pi)\) does not depend on the choice of \(\g, \U\) or \(u\). The representations that satisfy this condition are exactly the ones coming from the Helffer-Nourrigat cone. Let \(\Sigma_{\Fb}(M)\) be the quotient of \(\Sigma(M\times \g)\) corresponding to the closed subset \(\faktor{\HN_0(\Fb)}{\R^*_+}\). This algebra is independent of the choice of \(\g\) by Proposition \ref{Proposition : Independence HN} (see also the remark following that proposition).

We can see that algebra as a \(\CCC_0(M)-C^*\)-algebra, whose fiber at a point \(x \in M\) is \(\Sigma(\gr(\Fb)_x)\) if \(x \in M_{reg}\). If \(x\) is a singular point then \(\Sigma_{\F}(M)_{x}\) is a quotient of \(\Sigma(\gr(\Fb)_x)\) corresponding to the closed subset \(\faktor{\HN_0(\Fb)_x}{\R^*_+} \subset \faktor{\left(\widetilde{\gr(\Fb)_x}\setminus \{0\}\right)}{\R^*_+}\).

\begin{thm}[Androulidakis, Mohsen, Yuncken \cite{AMYMaximalHypoellipticity}] \label{Theorem: AMY key result}
Let \(P \in \Psi^0_{\Fb}(M)\), \(u \in \mathcal{E}^{'0}(M\times \g)\) be the restriction over \(M\times \g \times 0\) of any global lift. The class of \(\sigma^0(u)\) in \(\Sigma_{\Fb}(M)\) does not depend on any choice of lift and graded basis. Moreover the map \(\sigma^0 \colon \Psi^0_{\Fb}(M) \to \Sigma_{\Fb}(M)\) described this way is a continuous \(*\)-algebra homomorphism. If \(\Psi_{\Fb}(M)\) denotes the completion of \(\Psi^0_{\Fb}(M)\) within the bounded operators on \(L^2(M)\) we get the exact sequence:
\[\xymatrix{0 \ar[r] & \mathcal{K}(L^2(M)) \ar[r] & \Psi_{\Fb}(M) \ar[r]^{\sigma^0} & \Sigma_{\Fb}(M) \ar[r] & 0.}\]
\end{thm}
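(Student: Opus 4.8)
The plan is to read each clause of the statement off the properties of the principal symbol recalled above, using as the one structural input the presentation of \(\Sigma_{\Fb}(M)\) as the quotient of \(\Sigma(M\times\g)\) attached to the closed set \(\faktor{\HN_0(\Fb)}{\R^*_+}\). For well-definedness: if \(u,u'\in\mathcal{E}^{'0}(M\times\g)\) are the restrictions at \(t=0\) of two global lifts of the same \(P\in\Psi^0_{\Fb}(M)\), then \(\ev_{1*}(u-u')\) is smooth, i.e. the operator attached to \(u-u'\) is smoothing. The key observation is that for \((x,\pi)\) with \(\pi\) in the Helffer--Nourrigat cone, \(\sigma^0(u,x,\pi)\) is, by the very definition of the cone, a limit of rescaled classical symbols \(\delta^*_{t_n}\beta^*_{x_n}\) of \(P\) at points \((x_n,\xi_n)\) with \(t_n\to0\), \(x_n\to x\); since the classical symbol of a smoothing operator vanishes, \(\sigma^0(u-u')\) lies in the kernel of \(\Sigma(M\times\g)\to\Sigma_{\Fb}(M)\), so the class of \(\sigma^0(u)\) depends only on \(P\). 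Independence of the choice of graded basis \((\g,\beta,\U)\) is then the argument of Proposition \ref{Proposition : Independence HN}: passing to a common refinement \(\g\) of two choices \(\g_1,\g_2\), the descended symbols are intertwined by the embeddings \(\widehat{\gr(\beta)_x}\), and the identity \(\iota^1_x(\HN_{0,\g_1})=\iota^2_x(\HN_{0,\g_2})\) matches the two subquotients.

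That \(\sigma^0\) is a \(*\)-homomorphism is immediate fibrewise from the identities \(\sigma^k(u,x,\pi)\sigma^\ell(v,x,\pi)=\sigma^{k+\ell}(u\ast v,x,\pi)\) and \(\sigma^k(u,x,\pi)=\sigma^{\bar k}(u^*,x,\pi)^*\) recalled above, which descend to \(\Sigma_{\Fb}(M)\) since the defining sets of representations are invariant; continuity for the \(C^*\)-norm is the norm identity \(\sup_x\|\sigma(u,x)\|=\sup_{x,\pi}\|\sigma(u,x,\pi)\|\) of Theorem 3.18 of \cite{AMYMaximalHypoellipticity}, evaluated on the closed subset of representations in the cone.

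For the exact sequence, surjectivity of \(\sigma^0\) follows from the converse clause of the full-symbol proposition above: every homogeneous degree-\(0\) symbol \(A\) on \(M\times(\g^*\setminus0)\) is \(A_u\) for some \(u\in\mathcal{E}^{'0}(\U)\), so \(\ev_{1*}(u)\) hits the dense subalgebra of \(\Sigma_{\Fb}(M)\) spanned by the \(\sigma^0(u)\), and one passes to the completion. For the kernel, one inclusion is elementary: \(\CCC^\infty_c(M\times M,\hd)\) has vanishing symbol and is dense in \(\mathcal{K}(L^2(M))\), and \(\mathcal{K}(L^2(M))\) is the completion of the negative-order part of the calculus, so \(\mathcal{K}(L^2(M))\subset\ker\sigma^0\). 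The reverse inclusion \(\ker\sigma^0\subset\mathcal{K}(L^2(M))\) is where the analytic content of \cite{AMYMaximalHypoellipticity} enters, and I expect it to be the main obstacle: an order-\(0\) operator whose principal symbol vanishes on \(\HN_0(\Fb)\) must be, modulo smoothing, of negative order, hence compact on \(L^2(M)\). Proving that implication is exactly the direction that required their full pseudodifferential machinery (and, in the invertible rather than zero case, the resolution of the Helffer--Nourrigat conjecture); everything else above is bookkeeping with the symbol map and the quotient description of \(\Sigma_{\Fb}(M)\).
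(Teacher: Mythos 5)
The paper's own ``proof'' of this theorem is a pure citation: it points to Theorems~3.34 and~3.39 of \cite{AMYMaximalHypoellipticity} and adds nothing. There is therefore no argument in this paper for you to reproduce; the question is whether your sketch is a plausible outline of how that result is proved.

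Your outline is broadly sound and, crucially, you correctly identify and defer the genuine analytic heart of the statement, namely the inclusion \(\ker\sigma^0\subset\mathcal{K}(L^2(M))\). You also correctly place the easy inclusion (smoothing operators have vanishing symbol and generate \(\mathcal{K}\)) and the surjectivity (via the converse clause of the full-symbol proposition). Two places where the sketch is thinner than it pretends to be. First, the well-definedness step: the phrase ``\(\sigma^0(u,x,\pi)\) is a limit of rescaled classical symbols'' conflates two levels. The \(\R^*_+\)-homogeneity of the full symbol \(A_{\tilde u-\tilde u'}\) together with the Schwartz decay at \(t=1\) does force \(A_{\tilde u-\tilde u'}(\cdot,x,0)\) to vanish on \(\T^*_x\F\setminus\{0\}\) (this is the point of the limit condition defining the cone), but passing from vanishing of the scalar-valued function on the cone to vanishing of the operator \(\sigma^0(u-u',x,\pi)\) for \(\pi\) corresponding to an orbit in the cone requires the group Fourier transform and the Kirillov correspondence; this is precisely the content that gets packaged into the \(C^*(\T\F)\) construction (the quotient of \(C^*(\gr\Fb)\) by the ideal of elements extendable by zero to \(t>0\)). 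It is more than bookkeeping. Second, the \(*\)-homomorphism property uses not just the fibrewise identities \(\sigma^k\sigma^\ell=\sigma^{k+\ell}\) on \(\g\) but the fact that a global lift of \(P_1P_2\) is \(\tilde u_1\ast\tilde u_2\) modulo the appropriate error, which is a nontrivial claim of the calculus (again part of what Theorem~3.34 of \cite{AMYMaximalHypoellipticity} delivers). With those caveats, your route matches the one the paper implicitly takes by citation.
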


\begin{proof}
    The proof can be found in \cite{AMYMaximalHypoellipticity}. The fact that the principal symbol is well defined and its continuity are given by Theorem 3.34. The exact sequence is Theorem 3.39.
\end{proof}

Assume now that \((M,\F)\) is a finitely generated singular Lie filtration. We fix a global graded basis \(\beta\colon\g\to \mathfrak{X}(M)\) which by the Rotschild-Stein construction, comes with a Jordan-Hölder basis (for \(\g\)) compatible with the grading. Recall that we have an injection:
\[\beta^* \colon \HN_0(\F) \hookrightarrow M \times(\widehat{\g}\setminus 0).\]
Let \(\widetilde{U}_1\subset \cdots\subset \widetilde{U}_D = \widehat{\g}\setminus\{0\}\) be the increasing unions in the fine stratification (we have removed \(0\) on each fibers, it corresponds to the trivial representation and only affects the last strata). Let \(\widetilde{\Omega}_j = \widetilde{U}_j \setminus \widetilde{U}_{j-1}\) be the corresponding strata.

Let \(\Sigma_j \triangleleft\Sigma_{\Fb}(M)\) be the ideal such that:
\[\widehat{\Sigma_j} = (\beta^*)^{-1}\left( M\times \faktor{\widetilde{U}_j}{\R^*_+}\right).\]
This defines an ideal of \(\Sigma_{\Fb}(M)\) because it corresponds to an open subset of the spectrum.

\begin{lem}
    There is an isomorphism:
    \[\faktor{\Sigma_j}{\Sigma_{j-1}} \cong \CCC_0\left((\beta^*)^{-1}\left( M\times \faktor{\widetilde{\Omega}_j}{\R^*_+}\right), \mathcal{K}_j\right),\]
    where \(\mathcal{K}_j\) is the algebra of compact operators on a separable Hilbert space, of infinite dimension if \(j < D\) and dimension 1 if \(j = D\).
\end{lem}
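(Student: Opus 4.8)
The plan is to deduce the lemma from the analogous statement for the ambient algebra \(\Sigma(M\times\g)\) and then transport it through the quotient map \(q\colon\Sigma(M\times\g)\twoheadrightarrow\Sigma_{\Fb}(M)\) whose kernel \(I\) is the ideal corresponding to the open set \(\mathcal O:=\big(M\times\faktor{(\widehat\g\setminus 0)}{\R^*_+}\big)\setminus\beta^*\big(\faktor{\HN_0(\Fb)}{\R^*_+}\big)\). So the argument has two stages: first the "model" solvability of \(\Sigma(M\times\g)\) along the Pedersen strata of \(\g\), then a descent through \(q\) that simply restricts each subquotient to the closed subset cut out by the Helffer--Nourrigat cone.

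\emph{Step 1: the model case.} First I would record that \(\Sigma(M\times\g)\) is solvable with strata indexed by the fine Pedersen stratification of \(\g\). Writing \(\widetilde\Theta_j\triangleleft\Sigma(M\times\g)\) for the ideal whose spectrum is the open set \(M\times\faktor{\widetilde U_j}{\R^*_+}\) (open, being the image of the \(\R^*_+\)-invariant Zariski-open set \(\widetilde U_j\)), one should have
\[\faktor{\widetilde\Theta_j}{\widetilde\Theta_{j-1}}\ \cong\ \CCC_0\!\left(M\times\faktor{\widetilde\Omega_j}{\R^*_+},\ \mathcal K_j\right).\]
Since \(M\times\g\) is the constant, hence equiregular, bundle of groups, this is the stratification result of \cite{CrenFilteredCrossedProduct}; alternatively it follows from the factorisation \(\Sigma(M\times\g)\cong\CCC_0(M)\otimes\Sigma(\g)\) together with the \(\R^*_+\)-equivariant form of Corollary \ref{Cor: Solvability group}. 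Concretely, over a fixed stratum the Lipsman--Rosenberg realisation \cite{LipsmanRosenberg} puts every representation of \(\g\) on one and the same space \(L^2(\R^{n_j})\), with rational, non-singular dependence on \(\pi\in\widetilde\Omega_j\); the symbols \(\sigma^0(u,x,\pi)\) then depend continuously on \((x,\pi)\) and are constant along \(\R^*_+\)-orbits, so that after descending to the free and proper quotient by \(\R^*_+\) the resulting field of elementary \(C^*\)-algebras is trivialised. One gets \(\mathcal K_j=\mathcal K(L^2(\R^{n_j}))\) for \(j<D\), while the top stratum \(\widetilde\Omega_D\) consists of the non-trivial characters of \(\g\), so \(\mathcal K_D=\CC\).

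\emph{Step 2: descent through the quotient.} By construction \(\Sigma_j\) and \(q(\widetilde\Theta_j)\) are the ideals of \(\Sigma_{\Fb}(M)\) attached to the same open subset of \(\widehat{\Sigma_{\Fb}(M)}\), hence \(\Sigma_j=q(\widetilde\Theta_j)\). Thus \(q\) induces a surjection \(\faktor{\widetilde\Theta_j}{\widetilde\Theta_{j-1}}\twoheadrightarrow\faktor{\Sigma_j}{\Sigma_{j-1}}\) whose kernel is the image of the ideal \(\widetilde\Theta_j\cap I\). That ideal corresponds to the open set \(\big(M\times\faktor{\widetilde U_j}{\R^*_+}\big)\cap\mathcal O\), so its image in \(\faktor{\widetilde\Theta_j}{\widetilde\Theta_{j-1}}\cong\CCC_0\big(M\times\faktor{\widetilde\Omega_j}{\R^*_+},\mathcal K_j\big)\) is \(\CCC_0(V_j,\mathcal K_j)\) with \(V_j=\big(M\times\faktor{\widetilde\Omega_j}{\R^*_+}\big)\cap\mathcal O\) open. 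Consequently
\[\faktor{\Sigma_j}{\Sigma_{j-1}}\ \cong\ \faktor{\CCC_0\!\big(M\times\faktor{\widetilde\Omega_j}{\R^*_+},\mathcal K_j\big)}{\CCC_0(V_j,\mathcal K_j)}\ \cong\ \CCC_0\!\left(\big(M\times\faktor{\widetilde\Omega_j}{\R^*_+}\big)\setminus V_j,\ \mathcal K_j\right),\]
and since \(\beta^*\) is a homeomorphism onto its closed image, the closed complement \(\big(M\times\faktor{\widetilde\Omega_j}{\R^*_+}\big)\setminus V_j=\big(M\times\faktor{\widetilde\Omega_j}{\R^*_+}\big)\cap\beta^*\big(\faktor{\HN_0(\Fb)}{\R^*_+}\big)\) is identified with \((\beta^*)^{-1}\big(M\times\faktor{\widetilde\Omega_j}{\R^*_+}\big)\). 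This is the asserted isomorphism.

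\emph{Where the work is.} The only genuinely delicate point is the triviality of the field in Step 1 — that the subquotients of \(\Sigma(M\times\g)\) are honest \(\CCC_0(\,\cdot\,,\mathcal K)\)-algebras and not merely continuous-trace algebras with a possibly non-vanishing Dixmier--Douady class. This is exactly where one must use the \emph{fine} Pedersen stratification together with the Lipsman--Rosenberg uniformisation over each stratum, rather than the coarse stratification. Once Step 1 is in hand, Step 2 is routine: a quotient of \(\CCC_0(Y,\mathcal K)\) by an ideal of the form \(\CCC_0(V,\mathcal K)\) with \(V\subset Y\) open is automatically \(\CCC_0(Y\setminus V,\mathcal K)\) with the same \(\mathcal K\), so cutting down to the Helffer--Nourrigat cone introduces no new obstruction.
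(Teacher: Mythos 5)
Your proof is correct and takes essentially the same route as the paper: pass to the model algebra \(\Sigma(M\times\g)\), invoke the equiregular stratification result of \cite{CrenFilteredCrossedProduct} (equivalently, the factorisation \(\CCC_0(M)\otimes\Sigma(\g)\) plus Corollary \ref{Cor: Solvability group}) to obtain the subquotients \(\CCC_0\big(M\times\faktor{\widetilde{\Omega}_j}{\R^*_+},\mathcal{K}_j\big)\), and then descend through the quotient onto \(\Sigma_{\Fb}(M)\). Your Step 2 simply spells out, in terms of ideals and their corresponding open sets, what the paper compresses into the sentence that ``the construction of \(\Sigma_{\Fb}(M)\) then gives that this quotient has the desired form.''
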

\begin{proof}
    Define \(\widetilde{\Sigma}_j \triangleleft \Sigma(M \times \g)\) whose spectrum corresponds to the open set \(M\times \faktor{\widetilde{U}_j}{\R^*_+}\). Then by construction the map \(\Sigma(M \times \g) \to \Sigma_{\Fb}(M)\) sends \(\widetilde{\Sigma}_j\) to \(\Sigma_j\). We therefore have a quotient map:
    \[\faktor{\widetilde{\Sigma}_j}{\widetilde{\Sigma}_{j-1}}\twoheadrightarrow \faktor{\Sigma_j}{\Sigma_{j-1}}.\]

    Now from \cite[Theorem 5.16]{CrenFilteredCrossedProduct}, we know that:
    \[\faktor{\widetilde{\Sigma}_j}{\widetilde{\Sigma}_{j-1}} \cong \CCC_0\left( M\times \faktor{\widetilde{\Omega}_j}{\R^*_+}, \mathcal{K}_j\right).\]
    The construction of \(\Sigma_{\Fb}(M)\) then gives that this quotient has the desired form.
\end{proof}

Now, recall from Section \ref{Section : Stratification HN Cone} that we stratified \(\HN(\Fb)\) by taking the sets 
\[(\beta^*)^{-1}\left( M\times \widetilde{\Omega}_j\right),\]
and re-indexing them to only consider those that are non-empty. With sets described here we get a stratification of \(\HN_0(\Fb)\) (again, it only affects the last non-empty strata).
Let \(\Omega_1, \cdots, \Omega_d\) be the non-empty Pedersen strata in \(\HN_0(\Fb)\) and \(U_j = \cup_{k\leq j}\Omega_k\). Let \(\Sigma_j \subset \Sigma_{\F}(M)\) be the ideal corresponding to the open subset \(\faktor{U_j}{\R^*_+}\). This is nothing but one of the previous \(\Sigma_k\) after re-indexation to avoid repeating the same ideal several times. With these new notations we have:
\[\faktor{\Sigma_j}{\Sigma_{j-1}} \cong \CCC_0\left(\faktor{\Omega_j}{\R^*_+}, \mathcal{K}_j\right).\]
We summarize this discussion in  the following theorem:

\begin{thm}
    Let \((M,\F)\) be a finitely generated singular Lie filtration. We fix generators and let \(\g\) be the corresponding global Lie basis. Let
    \[\emptyset = U_0 \subset U_1 \subset \cdots \subset U_d = \HN_0(\F), \]
    be the Pedersen stratification and denote by \(\Omega_j = U_j\setminus U_{j-1}, 1\leq j \leq d\) the corresponding strata. Let \(\Sigma_j\triangleleft \Sigma_{\F}(M)\) be the ideal corresponding to the open set \(\faktor{U_j}{\R^*_+}\). Then we have:
    \[\faktor{\Sigma_j}{\Sigma_{j-1}} \cong \CCC_0\left(\faktor{\Omega_j}{\R^*_+},\mathcal{K}_j\right).\]
    Here \(\mathcal{K_j}\) is the algebra of compact operators on a separable Hilbert space, of infinite dimension for \(j < d\), and dimension 1 for \(j = d\).
\end{thm}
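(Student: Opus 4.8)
The plan is to assemble the three ingredients already in place just above: the preceding Lemma computing the subquotients $\faktor{\Sigma_j}{\Sigma_{j-1}}$ for the filtration attached to the fine stratification of the model group $\widehat{\g}$, the description from Section~\ref{Section : Stratification HN Cone} of how this stratification pulls back to a stratification of $\HN_0(\Fb)$, and a final re-indexing to remove the empty strata. First I would fix the global graded basis $\beta\colon\g\to\mathfrak{X}(M)$ with Jordan--Hölder basis compatible with the grading, so that we have the embedding $\beta^*\colon\HN_0(\Fb)\hookrightarrow M\times(\widehat{\g}\setminus\{0\})$ and, using that $\widehat{\Sigma_{\Fb}(M)}\cong\faktor{\HN_0(\Fb)}{\R^*_+}$, the ideals $\Sigma_j\triangleleft\Sigma_{\Fb}(M)$ with spectra the pullbacks of the open sets $\faktor{\widetilde{U}_j}{\R^*_+}$, where $\widetilde{U}_1\subset\cdots\subset\widetilde{U}_D=\widehat{\g}\setminus\{0\}$ are the increasing unions of the fine stratification. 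The Lemma above then already gives
\[\faktor{\Sigma_j}{\Sigma_{j-1}}\cong\CCC_0\Bigl((\beta^*)^{-1}\bigl(M\times\faktor{\widetilde{\Omega}_j}{\R^*_+}\bigr),\mathcal{K}_j\Bigr),\]
with $\mathcal{K}_j$ the compacts on a separable Hilbert space, of infinite dimension for $j<D$ and of dimension one for $j=D$; here the $\R^*_+$-equivariance is exactly what the compatibility of the Jordan--Hölder basis with the grading buys, via the $\R^*_+$-invariance of the Pedersen strata from Section~\ref{Section:PedersenStratification}.

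Next I would identify the sets on the right with the Pedersen strata of $\HN_0(\Fb)$. By Proposition~\ref{Proposition : Independence HN} the extrinsic and intrinsic pictures agree, so the sets $(\beta^*)^{-1}(M\times\widetilde{\Omega}_j)$ are exactly the pieces $\bigl(M\times\Omega_j^{\g}\bigr)\cap\HN(\Fb)$ that, after discarding the empty ones and re-indexing, were declared in Section~\ref{Section : Stratification HN Cone} to be the Pedersen strata of $\HN(\Fb)$; intersecting everything with $\HN_0(\Fb)$ only removes the trivial representation of each fibre and thus affects the top stratum only. When $(\beta^*)^{-1}(M\times\widetilde{\Omega}_j)$ is empty the corresponding subquotient is $\CCC_0(\emptyset,\mathcal{K}_j)=0$, i.e. $\Sigma_j=\Sigma_{j-1}$; discarding these repetitions, relabelling the non-empty strata $\Omega_1,\dots,\Omega_d$ with $U_j=\bigcup_{k\le j}\Omega_k$ and re-indexing $0=\Sigma_0\triangleleft\cdots\triangleleft\Sigma_d=\Sigma_{\Fb}(M)$ accordingly yields the strictly increasing chain with $\faktor{\Sigma_j}{\Sigma_{j-1}}\cong\CCC_0\bigl(\faktor{\Omega_j}{\R^*_+},\mathcal{K}_j\bigr)$.

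Finally I would pin down the one-dimensional case. The top stratum $\widetilde{\Omega}_D=[\g,\g]^{\perp}\setminus\{0\}$ of $\widehat{\g}$ consists of non-trivial characters, and it does meet $\HN_0(\Fb)$: over any regular point $x\in M_{reg}$ — which exists since $M_{reg}$ is open and dense — the fibre $\HN(\Fb)_x$ is the full unitary dual $\widehat{\gr(\Fb)_x}$ by Theorem~\ref{Theorem:Stratification}, hence contains every character of $\gr(\Fb)_x$, and $\widehat{\gr(\beta)_x}$ carries these to non-trivial characters of $\g$ over $x$. So the index $D$ survives the re-indexing as the last one, whence $\Omega_d$ consists of characters and $\mathcal{K}_d\cong\CC$, while every earlier surviving index is $<D$ and the corresponding $\mathcal{K}_j$ is the infinite-dimensional compacts. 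I do not expect a genuine obstacle: this is bookkeeping on top of results already established, the only minor points being that forming the ideal of an open subset of the spectrum commutes both with the surjection $\Sigma(M\times\g)\twoheadrightarrow\Sigma_{\Fb}(M)$ and with passing to subquotients, and that restricting the locally trivial field $\CCC_0\bigl(M\times\faktor{\widetilde{\Omega}_j}{\R^*_+},\mathcal{K}_j\bigr)$ to the closed subset $\faktor{\Omega_j}{\R^*_+}$ again gives a (locally) trivial field with the same simple fibre $\mathcal{K}_j$.
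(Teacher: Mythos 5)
Your proposal is correct and follows essentially the same route as the paper: it invokes the preceding Lemma for the subquotients of the filtration attached to the fine stratification of $\widehat{\g}$, identifies the pullbacks $(\beta^*)^{-1}(M\times\widetilde{\Omega}_j)$ with the strata of $\HN_0(\Fb)$ constructed in Section~\ref{Section : Stratification HN Cone}, and then re-indexes after discarding the repeated ideals coming from empty strata. The observation that the top stratum $\widetilde{\Omega}_D$ always survives (because characters appear over $M_{reg}$) so that $\mathcal{K}_d\cong\CC$ while all other surviving indices are $<D$ and hence yield infinite-dimensional compacts is the right way to pin down the last clause, and is what the paper tacitly relies on.
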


\begin{cor}
    The \(C^*\)-algebras \(\Sigma_{\Fb}(M)\) and \(\Psi_{\Fb}(M)\) are (explicitely) solvable.
\end{cor}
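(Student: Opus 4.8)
The plan is to read off the required composition series directly from the theorem just proved, and then to handle $\Psi_{\Fb}(M)$ by prepending the compact operators. Recall first that a $C^*$-algebra $A$ is (explicitly) solvable when it admits a finite chain of closed two-sided ideals
\[0 = I_0 \triangleleft I_1 \triangleleft \cdots \triangleleft I_n = A\]
whose successive subquotients $\faktor{I_k}{I_{k-1}}$ are each isomorphic to $\CCC_0(X_k,\mathcal{K}_k)$ for some locally compact Hausdorff space $X_k$ and some algebra $\mathcal{K}_k$ of compact operators on a separable Hilbert space. For $\Sigma_{\Fb}(M)$ there is nothing to do beyond quoting the preceding theorem: the ideals $\Sigma_j$ attached to the open sets $\faktor{U_j}{\R^*_+}$ form such a chain, with $\faktor{\Sigma_j}{\Sigma_{j-1}} \cong \CCC_0\!\left(\faktor{\Omega_j}{\R^*_+},\mathcal{K}_j\right)$, and Theorem \ref{Theorem:Stratification} guarantees that every $\faktor{\Omega_j}{\R^*_+}$ is locally compact Hausdorff.

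For $\Psi_{\Fb}(M)$ I would use the exact sequence $0 \to \mathcal{K}(L^2(M)) \hookrightarrow \Psi_{\Fb}(M) \xrightarrow{\sigma^0} \Sigma_{\Fb}(M) \to 0$ of Theorem \ref{Theorem: AMY key result}. Set $\Psi_0 := \mathcal{K}(L^2(M))$ and, for $1 \leq j \leq d$, $\Psi_j := (\sigma^0)^{-1}(\Sigma_j)$; each $\Psi_j$ is a closed two-sided ideal of $\Psi_{\Fb}(M)$ (preimage of an ideal), and $\Psi_0 \triangleleft \Psi_1 \triangleleft \cdots \triangleleft \Psi_d = \Psi_{\Fb}(M)$ by surjectivity of $\sigma^0$. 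The bottom subquotient is $\Psi_0 = \mathcal{K}(L^2(M)) \cong \CCC_0(\{\mathrm{pt}\},\mathcal{K}(L^2(M)))$, which is of the required form once one notes that $M$ is second countable, so $L^2(M)$ is separable. For $j \geq 1$ one has $\mathcal{K}(L^2(M)) = \Psi_0 \subseteq \Psi_{j-1}$, so $\sigma^0$ descends to an isomorphism $\faktor{\Psi_j}{\Psi_{j-1}} \cong \faktor{\Sigma_j}{\Sigma_{j-1}} \cong \CCC_0\!\left(\faktor{\Omega_j}{\R^*_+},\mathcal{K}_j\right)$ by a routine diagram chase. This yields the composition series for $\Psi_{\Fb}(M)$.

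I do not expect any genuine obstacle: all of the mathematical content has already been established — the Hausdorffness and local compactness of the strata $\faktor{\Omega_j}{\R^*_+}$ in Theorem \ref{Theorem:Stratification}, and the explicit form of the subquotients in the theorem above. The only points deserving an explicit line are that $\mathcal{K}(L^2(M))$ fits the pattern $\CCC_0(X,\mathcal{K})$ (take $X$ a point and use second countability of $M$), and the elementary fact that an extension of an explicitly solvable $C^*$-algebra by $\mathcal{K}$ is again explicitly solvable, which is precisely the diagram chase indicated.
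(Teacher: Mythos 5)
Your proof is correct and follows essentially the same route as the paper: the solvability of $\Sigma_{\Fb}(M)$ is immediate from the preceding theorem, and the solvability of $\Psi_{\Fb}(M)$ is deduced by pulling back the composition series along the exact sequence $0 \to \mathcal{K}(L^2(M)) \to \Psi_{\Fb}(M) \to \Sigma_{\Fb}(M) \to 0$. The paper states this in one line; you have merely spelled out the (correct, routine) details of the pullback chain and the identification of the subquotients.
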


\begin{proof}
    We have proved it for \(\Sigma_{\Fb}(M)\), the result then follows for \(\Psi_{\Fb}(M)\) by the exact sequence of Theorem \ref{Theorem: AMY key result}.
\end{proof}

\begin{rem}
    This result also works with other types of decomposition. In the case where the coarse stratification of Pukanszky gives trivial subquotients like the fine one (e.g. if \(\g\) is a step-2 group and has flat orbits like Examples \ref{Ex: Heisenberg group} and \ref{Ex: Complex Heisenberg group}).
\end{rem}

\begin{cor}
    Let \(C^*_0(\T\F)\) be the ideal of \(C^*(\T\F)\) corresponding to the open set \(\HN_0(\F)\subset \HN(\F)\). Then the action \(\delta\colon \R^*_+\act C^*_0(\T\F)\) is free and proper, we get:
    \[\Sigma_{\F}(M)\otimes \mathcal{K} \cong C^*_0(\T\F)\rtimes_{\delta}\R^*_+,\]
    where \(\mathcal{K}\) is the algebra of compact operators on a separable infinite dimensional Hilbert space.
\end{cor}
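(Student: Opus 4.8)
The plan is to derive the isomorphism from its equiregular counterpart for the Rothschild--Stein model group \(\g\), by transferring along an \(\R^*_+\)-equivariant quotient. First, since \(\HN(\Fb)\) is a closed \(\R^*_+\)-invariant subset of \(M\times\widehat{\g}\) containing the trivial part \(M\times\{0\}\), the ideal of \(\CCC_0(M)\otimes C^*(\g)\) with spectrum \(M\times(\widehat{\g}\setminus 0)\) is \(\CCC_0(M)\otimes C^*_0(\g)\), and \(C^*_0(\T\F)\) — the ideal of \(C^*(\T\F)\) with spectrum \(\HN_0(\Fb)=\HN(\Fb)\cap\bigl(M\times(\widehat{\g}\setminus 0)\bigr)\) — is exactly the quotient \(\bigl(\CCC_0(M)\otimes C^*_0(\g)\bigr)/K\) by the \(\R^*_+\)-invariant ideal \(K\) with \(\widehat{K}=\bigl(M\times(\widehat{\g}\setminus 0)\bigr)\setminus\HN_0(\Fb)\), the quotient map being \(\delta\)-equivariant because \(\HN(\Fb)\) is \(\delta\)-invariant. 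The action \(\delta\) on \(C^*_0(\T\F)\) is then free and proper: on the spectrum this is the statement recorded after Theorem~\ref{Theorem:SpectrumSymbols} that \(\R^*_+\) acts freely and properly on \(\HN_0(\Fb)\) (with \(\HN_0(\Fb)/\R^*_+\) locally compact Hausdorff by Theorem~\ref{Theorem:Stratification}), and at the \(C^*\)-level Rieffel-properness — with freeness and saturation — holds for \(\CCC_0(M)\otimes C^*_0(\g)\), which underlies \cite{CrenFilteredCrossedProduct}, and descends along the above equivariant quotient, since the \(L^1(\R^*_+)\)-integrability estimates defining properness only improve under a quotient norm. In particular \(\widehat{A\rtimes_\delta\R^*_+}\cong\widehat{A}/\R^*_+\) for the algebras \(A\) occurring here, and \(A\mapsto A\rtimes_\delta\R^*_+\) carries \(\R^*_+\)-invariant ideals to ideals compatibly with the passage to orbit spaces of spectra.

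Next, write \(\R^*_+\cong(\R,+)\): the group is amenable, so \(-\rtimes_\delta\R^*_+\) is exact and full crossed products coincide with reduced ones, and \(B\R^*_+\) is contractible, so every principal \(\R^*_+\)-bundle over a paracompact base is trivial. Combining the latter with the Pedersen stratification of \(\widehat{\g}\) gives — as in \cite[Section~5]{CrenFilteredCrossedProduct} — a natural isomorphism \(C^*_0(\g)\rtimes_\delta\R^*_+\cong\Sigma(\g)\otimes\mathcal{K}\) with \(\mathcal{K}=\mathcal{K}(L^2(\R^*_+))\), hence, tensoring with \(\CCC_0(M)\) (trivial \(\R^*_+\)-action),
\[\bigl(\CCC_0(M)\otimes C^*_0(\g)\bigr)\rtimes_\delta\R^*_+\;\cong\;\Sigma(M\times\g)\otimes\mathcal{K},\]
which on spectra is the identification \(\bigl(M\times(\widehat{\g}\setminus 0)\bigr)/\R^*_+\cong\widehat{\Sigma(M\times\g)}\). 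Applying the exact functor \(-\rtimes_\delta\R^*_+\) to the quotient \(\bigl(\CCC_0(M)\otimes C^*_0(\g)\bigr)/K\cong C^*_0(\T\F)\) yields a short exact sequence whose ideal is \(K\rtimes_\delta\R^*_+\); through the displayed isomorphism this ideal has spectrum the image of \(\widehat{K}\) in the orbit space, i.e. the open complement of \(\HN_0(\Fb)/\R^*_+=\widehat{\Sigma_{\F}(M)}\) (Theorem~\ref{Theorem:SpectrumSymbols}). Since an ideal of a \(C^*\)-algebra is determined by its spectrum and \(-\otimes\mathcal{K}\) is exact, \(K\rtimes_\delta\R^*_+\) corresponds to \(I_0\otimes\mathcal{K}\) with \(I_0=\ker\bigl(\Sigma(M\times\g)\to\Sigma_{\F}(M)\bigr)\), and passing to quotients gives \(C^*_0(\T\F)\rtimes_\delta\R^*_+\cong\bigl(\Sigma(M\times\g)/I_0\bigr)\otimes\mathcal{K}=\Sigma_{\F}(M)\otimes\mathcal{K}\).

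The main obstacle is the bookkeeping around the \(C^*\)-algebraic properness: one must verify that Rieffel-properness (with freeness and saturation) genuinely descends along the equivariant quotient and is strong enough both to give \(\widehat{A\rtimes_\delta\R^*_+}\cong\widehat{A}/\R^*_+\) and to make the correspondence of invariant ideals through the crossed product transparent, and that the equiregular step produces an \emph{honest} isomorphism rather than a mere Morita equivalence — equivalently, that \(\bigl(\CCC_0(M)\otimes C^*_0(\g)\bigr)\rtimes_\delta\R^*_+\) is stable. If \cite{CrenFilteredCrossedProduct} does not provide this in the form used above, it can be reproved stratum by stratum, using exactness of \(-\rtimes\R^*_+\), triviality of principal \(\R^*_+\)-bundles over the paracompact Pedersen strata, and the resulting isomorphism \(\CCC_0(\Omega,\mathcal{K}')\rtimes_\delta\R^*_+\cong\CCC_0(\Omega/\R^*_+,\mathcal{K}')\otimes\mathcal{K}(L^2(\R^*_+))\) on each stratum \(\Omega\); but one should then carry everything as quotients of the single algebra \(\CCC_0(M)\otimes C^*_0(\g)\), since reassembling \(\Sigma_{\F}(M)\otimes\mathcal{K}\) from its subquotients would require matching extension data not controlled by the stratification alone.
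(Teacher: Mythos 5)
Your proof is correct in outline and takes a genuinely different route from the paper's. The paper argues by induction along the Pedersen filtration of \(C^*_0(\T\F)\) and of \(\Sigma_{\Fb}(M)\): it shows \((J_{k+1}/J_k)\rtimes_\delta\R^*_+\cong(\Sigma_{k+1}/\Sigma_k)\otimes\mathcal{K}\) via the polar-coordinate trivialization of each stratum and Takai duality, and concludes \(J_k\rtimes_\delta\R^*_+\cong\Sigma_k\otimes\mathcal{K}\) for all \(k\). You instead start from the equiregular model-group isomorphism \(\bigl(\CCC_0(M)\otimes C^*_0(\g)\bigr)\rtimes_\delta\R^*_+\cong\Sigma(M\times\g)\otimes\mathcal{K}\) and push it down to \(C^*_0(\T\F)\) through the \(\delta\)-equivariant quotient by the ideal \(K\), using exactness of \(-\rtimes\R^*_+\) and the fact that an ideal in \(\Sigma(M\times\g)\otimes\mathcal{K}\) is pinned down by its spectrum. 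Your approach buys something real: the paper's induction, as written, only matches the two algebras at the level of the ideal \(J_k\rtimes\R^*_+\) and the subquotient \((J_{k+1}/J_k)\rtimes\R^*_+\), and does not explain why these isomorphisms assemble to an isomorphism of the extension \(J_{k+1}\rtimes\R^*_+\cong\Sigma_{k+1}\otimes\mathcal{K}\) — exactly the extension-matching issue you flag at the end of your proposal. By pushing a single global isomorphism through one quotient map, you avoid ever having to reassemble an algebra from subquotients. The cost is that your argument leans harder on the external input from \cite{CrenFilteredCrossedProduct}: you need the equiregular result to come as an honest isomorphism (not merely a Morita equivalence) that is natural on spectra and compatible with passing to \(\R^*_+\)-invariant ideals. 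If that form is indeed supplied there, your route is the cleaner one; if it is not, the gap you identify in the stratum-by-stratum reassembly does not go away but merely relocates to the equiregular step, where one must still produce a natural comparison map before invoking the stratification to prove it is an isomorphism.
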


\begin{proof}
    Denote by 
    \[0 =\Sigma_0 \triangleleft\Sigma_1\triangleleft\cdots\triangleleft \Sigma_d = \Sigma_{\F}(M)\]
    and
    \[0 =J_0 \triangleleft J_1\triangleleft\cdots\triangleleft J_d = C^*_0(\T\F),\]
    the decompositions into nested ideals obtained from a Pedersen stratification of \(\HN_0(\F)\). By induction it suffices to prove that \(J_k \rtimes_{\delta}\R^*_+ \cong \Sigma_k\otimes \mathcal{K}\). Note first that the ideals \(J_k\) are indeed preserved by the \(\R^*_+\)-action since they correspond to parts of the spectrum that are stable by the corresponding action.

    If we have an isomorphism for some \(k\) then we can deduce that we still have one for \(k+1\) by looking at \(\faktor{J_k}{J_{k-1}}\). Since \(\Omega_k\) is homeomorphic to \(\faktor{\Omega_k}{\R^*_+}\times \R^*_+\) (we can choose for instance a homogeneous quasi-norm on \(\gr(\Fb)^*\) so that this corresponds to polar coordinates), the isomorphism
    \[\CCC_0(\Omega_k,\mathcal{K}_k)\rtimes_{\delta}\R^*_+ \cong \CCC_0(\faktor{\Omega_k}{\R^*_+},\mathcal{K}_k)\otimes \mathcal{K}\]
    is a direct consequence of Takai duality.
\end{proof}

\nocite{*}
\bibliographystyle{plain}
\bibliography{Ref}

@misc{MohsenBlowupFoliation,
      title={Blow-up groupoid of singular foliations}, 
      author={Omar Mohsen},
      year={2022},
      eprint={2105.05201},
      archivePrefix={arXiv},
      primaryClass={math.DG},
      url={https://arxiv.org/abs/2105.05201}, 
}

@misc{MohsenHypoellipticIndex,
      title={On the index of maximally hypoelliptic differential operators}, 
      author={Omar Mohsen},
      year={2022},
      eprint={2201.13049},
      archivePrefix={arXiv},
      primaryClass={math.KT},
      url={https://arxiv.org/abs/2201.13049}, 
}

@misc{AMYMaximalHypoellipticity,
      title={A pseudodifferential calculus for maximally hypoelliptic operators and the {H}elffer-{N}ourrigat conjecture}, 
      author={Iakovos Androulidakis and Omar Mohsen and Robert Yuncken},
      year={2022},
      eprint={2201.12060},
      archivePrefix={arXiv},
      primaryClass={math.AP},
      url={https://arxiv.org/abs/2201.12060}, 
}

@article {Kirillov,
    AUTHOR = {Kirillov, A. A.},
     TITLE = {Unitary representations of nilpotent {L}ie groups},
   JOURNAL = {Uspehi Mat. Nauk},
  FJOURNAL = {Akademija Nauk SSSR i Moskovskoe Matemati\v ceskoe Ob\v s\v
              cestvo. Uspehi Matemati\v ceskih Nauk},
    VOLUME = {17},
      YEAR = {1962},
    NUMBER = {4(106)},
     PAGES = {57--110},
      ISSN = {0042-1316},
   MRCLASS = {22.60 (46.80)},
  MRNUMBER = {142001},
MRREVIEWER = {J.\ M. G. Fell},
}

@article {Rockland,
    AUTHOR = {Rockland, Charles},
     TITLE = {Hypoellipticity on the {H}eisenberg
              group-representation-theoretic criteria},
   JOURNAL = {Trans. Amer. Math. Soc.},
  FJOURNAL = {Transactions of the American Mathematical Society},
    VOLUME = {240},
      YEAR = {1978},
     PAGES = {1--52},
      ISSN = {0002-9947,1088-6850},
   MRCLASS = {22E30 (35H05 58G05)},
  MRNUMBER = {486314},
MRREVIEWER = {S.\ G.\ Gindikin},
       DOI = {10.2307/1998805},
       URL = {https://doi.org/10.2307/1998805},
}

@article {FollandStein,
    AUTHOR = {Folland, G. B. and Stein, E. M.},
     TITLE = {Estimates for the {$\bar \partial \sb{b}$} complex and
              analysis on the {H}eisenberg group},
   JOURNAL = {Comm. Pure Appl. Math.},
  FJOURNAL = {Communications on Pure and Applied Mathematics},
    VOLUME = {27},
      YEAR = {1974},
     PAGES = {429--522},
      ISSN = {0010-3640,1097-0312},
   MRCLASS = {35N15 (22E30 32K15 47G05)},
  MRNUMBER = {367477},
MRREVIEWER = {S.\ G.\ Gindikin},
       DOI = {10.1002/cpa.3160270403},
       URL = {https://doi.org/10.1002/cpa.3160270403},
}

@article {RothschildStein,
    AUTHOR = {Rothschild, Linda Preiss and Stein, E. M.},
     TITLE = {Hypoelliptic differential operators and nilpotent groups},
   JOURNAL = {Acta Math.},
  FJOURNAL = {Acta Mathematica},
    VOLUME = {137},
      YEAR = {1976},
    NUMBER = {3-4},
     PAGES = {247--320},
      ISSN = {0001-5962,1871-2509},
   MRCLASS = {58G05 (35H05)},
  MRNUMBER = {436223},
MRREVIEWER = {Gerald\ B.\ Folland},
       DOI = {10.1007/BF02392419},
       URL = {https://doi.org/10.1007/BF02392419},
}

@book {HelfferNourrigatBook,
    AUTHOR = {Helffer, Bernard and Nourrigat, Jean},
     TITLE = {Hypoellipticit\'e{} maximale pour des op\'erateurs polyn\^omes
              de champs de vecteurs},
    SERIES = {Progress in Mathematics},
    VOLUME = {58},
 PUBLISHER = {Birkh\"auser Boston, Inc., Boston, MA},
      YEAR = {1985},
     PAGES = {x+278},
      ISBN = {0-8176-3310-3},
   MRCLASS = {35H05 (22E25 32F20 58G05)},
  MRNUMBER = {897103},
MRREVIEWER = {Kenneth\ G.\ Miller},
}

@article {HelfferNourrigatPaper,
    AUTHOR = {Helffer, Bernard and Nourrigat, Jean},
     TITLE = {Hypoellipticit\'e{} maximale pour des op\'erateurs polyn\^omes
              de champs de vecteurs},
   JOURNAL = {C. R. Acad. Sci. Paris S\'er. A-B},
  FJOURNAL = {Comptes Rendus Hebdomadaires des S\'eances de l'Acad\'emie des
              Sciences. S\'eries A et B},
    VOLUME = {289},
      YEAR = {1979},
    NUMBER = {16},
     PAGES = {A775--A778},
      ISSN = {0151-0509},
   MRCLASS = {35H05 (57R25 58G05)},
  MRNUMBER = {558795},
}

@misc{CrenFilteredCrossedProduct,
      title={Filtered Calculus and crossed products by {R}-actions}, 
      author={Clément Cren},
      year={2025},
      eprint={2308.13532},
      archivePrefix={arXiv},
      primaryClass={math.OA},
      url={https://arxiv.org/abs/2308.13532}, 
}

@article {Hörmander,
    AUTHOR = {H\"ormander, Lars},
     TITLE = {Hypoelliptic second order differential equations},
   JOURNAL = {Acta Math.},
  FJOURNAL = {Acta Mathematica},
    VOLUME = {119},
      YEAR = {1967},
     PAGES = {147--171},
      ISSN = {0001-5962,1871-2509},
   MRCLASS = {35.48 (47.00)},
  MRNUMBER = {222474},
MRREVIEWER = {Joel\ Smoller},
       DOI = {10.1007/BF02392081},
       URL = {https://doi.org/10.1007/BF02392081},
}

@misc{DebordBourbaki,
      title={Hypoellipticit{\'e} de polyn{\^o}mes de champs de vecteurs et conjectures de {H}elffer et {N}ourrigat}, 
      author={Claire Debord},
      year={2025},
      eprint={2504.13508},
      archivePrefix={arXiv},
      primaryClass={math.AP},
      url={https://arxiv.org/abs/2504.13508}, 
}

@article {AndroulidakisSkandalisHolonomy,
    AUTHOR = {Androulidakis, Iakovos and Skandalis, Georges},
     TITLE = {The holonomy groupoid of a singular foliation},
   JOURNAL = {J. Reine Angew. Math.},
  FJOURNAL = {Journal f\"ur die Reine und Angewandte Mathematik. [Crelle's
              Journal]},
    VOLUME = {626},
      YEAR = {2009},
     PAGES = {1--37},
      ISSN = {0075-4102,1435-5345},
   MRCLASS = {58H05 (46L80 57R30 58J42)},
  MRNUMBER = {2492988},
MRREVIEWER = {Yuri\ A.\ Kordyukov},
       DOI = {10.1515/CRELLE.2009.001},
       URL = {https://doi.org/10.1515/CRELLE.2009.001},
}

@article {AndroulidakisSkandalisPseudoDiff,
    AUTHOR = {Androulidakis, Iakovos and Skandalis, Georges},
     TITLE = {Pseudodifferential calculus on a singular foliation},
   JOURNAL = {J. Noncommut. Geom.},
  FJOURNAL = {Journal of Noncommutative Geometry},
    VOLUME = {5},
      YEAR = {2011},
    NUMBER = {1},
     PAGES = {125--152},
      ISSN = {1661-6952,1661-6960},
   MRCLASS = {58J40 (46L87 47G30 57R30)},
  MRNUMBER = {2746653},
MRREVIEWER = {Yuri\ A.\ Kordyukov},
       DOI = {10.4171/JNCG/72},
       URL = {https://doi.org/10.4171/JNCG/72},
}

@article {Taylor,
    AUTHOR = {Taylor, Michael E.},
     TITLE = {Noncommutative microlocal analysis. {I}.},
   JOURNAL = {Mem. Amer. Math. Soc.},
  FJOURNAL = {Memoirs of the American Mathematical Society},
    VOLUME = {52},
      YEAR = {1984},
    NUMBER = {313},
     PAGES = {iv+182},
      ISSN = {0065-9266,1947-6221},
   MRCLASS = {58G15 (35A27 35H05 35S99)},
  MRNUMBER = {764508},
MRREVIEWER = {Gunther\ A.\ Uhlmann},
       DOI = {10.1090/memo/0313},
       URL = {https://doi.org/10.1090/memo/0313},
}

@article {CGGP,
    AUTHOR = {Christ, Michael and Geller, Daryl and G{\l}owacki, Pawe\l{} and
              Polin, Larry},
     TITLE = {Pseudodifferential operators on groups with dilations},
   JOURNAL = {Duke Math. J.},
  FJOURNAL = {Duke Mathematical Journal},
    VOLUME = {68},
      YEAR = {1992},
    NUMBER = {1},
     PAGES = {31--65},
      ISSN = {0012-7094,1547-7398},
   MRCLASS = {35S05 (22E30 47G30 58G15)},
  MRNUMBER = {1185817},
MRREVIEWER = {Xue\ Bo\ Luo},
       DOI = {10.1215/S0012-7094-92-06802-5},
       URL = {https://doi.org/10.1215/S0012-7094-92-06802-5},
}

@article {FermanianFischerGradedLie,
    AUTHOR = {Fermanian-Kammerer, Clotilde and Fischer, V\'eronique},
     TITLE = {Defect measures on graded {L}ie groups},
   JOURNAL = {Ann. Sc. Norm. Super. Pisa Cl. Sci. (5)},
  FJOURNAL = {Annali della Scuola Normale Superiore di Pisa. Classe di
              Scienze. Serie V},
    VOLUME = {21},
      YEAR = {2020},
     PAGES = {207--291},
      ISSN = {0391-173X,2036-2145},
   MRCLASS = {35S05 (22E25 22E30 46L89)},
  MRNUMBER = {4288603},
}

@article {DaveHallerBGG,
    AUTHOR = {Dave, Shantanu and Haller, Stefan},
     TITLE = {Graded hypoellipticity of {BGG} sequences},
   JOURNAL = {Ann. Global Anal. Geom.},
  FJOURNAL = {Annals of Global Analysis and Geometry},
    VOLUME = {62},
      YEAR = {2022},
    NUMBER = {4},
     PAGES = {721--789},
      ISSN = {0232-704X,1572-9060},
   MRCLASS = {58J40 (58A14 58A30 58J10)},
  MRNUMBER = {4493251},
MRREVIEWER = {Luigi\ Rodino},
       DOI = {10.1007/s10455-022-09870-0},
       URL = {https://doi.org/10.1007/s10455-022-09870-0},
}

@article {deGraafClassification,
    AUTHOR = {de Graaf, Willem A.},
     TITLE = {Classification of 6-dimensional nilpotent {L}ie algebras over
              fields of characteristic not 2},
   JOURNAL = {J. Algebra},
  FJOURNAL = {Journal of Algebra},
    VOLUME = {309},
      YEAR = {2007},
    NUMBER = {2},
     PAGES = {640--653},
      ISSN = {0021-8693,1090-266X},
   MRCLASS = {17B30 (17B55)},
  MRNUMBER = {2303198},
MRREVIEWER = {Vladimir\ V.\ Gorbatsevich},
       DOI = {10.1016/j.jalgebra.2006.08.006},
       URL = {https://doi.org/10.1016/j.jalgebra.2006.08.006},
}

@article {Letrouit,
    AUTHOR = {Letrouit, Cyril},
     TITLE = {Subelliptic wave equations are never observable},
   JOURNAL = {Anal. PDE},
  FJOURNAL = {Analysis \& PDE},
    VOLUME = {16},
      YEAR = {2023},
    NUMBER = {3},
     PAGES = {643--678},
      ISSN = {2157-5045,1948-206X},
   MRCLASS = {35L05 (22E25 35H10 35H20 35S05 78A05 93B05 93B07)},
  MRNUMBER = {4596728},
       DOI = {10.2140/apde.2023.16.643},
       URL = {https://doi.org/10.2140/apde.2023.16.643},
}

@article {FermanianLetrouit,
    AUTHOR = {Fermanian Kammerer, Clotilde and Letrouit, Cyril},
     TITLE = {Observability and controllability for the {S}chr\"odinger
              equation on quotients of groups of {H}eisenberg type},
   JOURNAL = {J. \'Ec. polytech. Math.},
  FJOURNAL = {Journal de l'\'Ecole polytechnique. Math\'ematiques},
    VOLUME = {8},
      YEAR = {2021},
     PAGES = {1459--1513},
      ISSN = {2429-7100,2270-518X},
   MRCLASS = {93B07 (35Q55 53C17 93B05 93B27 93C20)},
  MRNUMBER = {4313147},
MRREVIEWER = {Enrique\ Zuazua},
       DOI = {10.5802/jep.176},
       URL = {https://doi.org/10.5802/jep.176},
}

@misc{benedetto,
      title={Quantum ergodicity for contact metric structures}, 
      author={Lino Benedetto},
      year={2025},
      eprint={2507.18216},
      archivePrefix={arXiv},
      primaryClass={math.AP},
      url={https://arxiv.org/abs/2507.18216}, 
}

@article {PedersenStratification,
    AUTHOR = {Pedersen, Niels Vigand},
     TITLE = {Geometric quantization and the universal enveloping algebra of
              a nilpotent {L}ie group},
   JOURNAL = {Trans. Amer. Math. Soc.},
  FJOURNAL = {Transactions of the American Mathematical Society},
    VOLUME = {315},
      YEAR = {1989},
    NUMBER = {2},
     PAGES = {511--563},
      ISSN = {0002-9947,1088-6850},
   MRCLASS = {22E27 (58F06)},
  MRNUMBER = {967317},
MRREVIEWER = {Wojciech\ Lisiecki},
       DOI = {10.2307/2001294},
       URL = {https://doi.org/10.2307/2001294},
}

@article {Bonnet,
    AUTHOR = {Bonnet, Pierre},
     TITLE = {Param\'etrisation du dual d'une alg\`ebre de {L}ie nilpotente},
   JOURNAL = {Ann. Inst. Fourier (Grenoble)},
  FJOURNAL = {Universit\'e{} de Grenoble. Annales de l'Institut Fourier},
    VOLUME = {38},
      YEAR = {1988},
    NUMBER = {3},
     PAGES = {169--197},
      ISSN = {0373-0956,1777-5310},
   MRCLASS = {22E25 (22E30)},
  MRNUMBER = {976688},
MRREVIEWER = {Rainer\ Felix},
       DOI = {10.5802/aif.1144},
       URL = {https://doi.org/10.5802/aif.1144},
}

@misc{GoffengKuzmin,
      title={Index theory of hypoelliptic operators on Carnot manifolds}, 
      author={Magnus Goffeng and Alexey Kuzmin},
      year={2024},
      eprint={2203.04717},
      archivePrefix={arXiv},
      primaryClass={math.DG},
      url={https://arxiv.org/abs/2203.04717}, 
}

@article {Pukanszky,
    AUTHOR = {Pukanszky, L.},
     TITLE = {Unitary representations of solvable {L}ie groups},
   JOURNAL = {Ann. Sci. \'Ecole Norm. Sup. (4)},
  FJOURNAL = {Annales Scientifiques de l'\'Ecole Normale Sup\'erieure.
              Quatri\`eme S\'erie},
    VOLUME = {4},
      YEAR = {1971},
     PAGES = {457--608},
      ISSN = {0012-9593},
   MRCLASS = {22E25 (22E45)},
  MRNUMBER = {439985},
MRREVIEWER = {C.\ C.\ Moore},
       URL = {http://www.numdam.org/item?id=ASENS_1971_4_4_4_457_0},
}

@article {LipsmanRosenberg,
    AUTHOR = {Lipsman, Ronald L. and Rosenberg, Jonathan},
     TITLE = {The behavior of {F}ourier transforms for nilpotent {L}ie
              groups},
   JOURNAL = {Trans. Amer. Math. Soc.},
  FJOURNAL = {Transactions of the American Mathematical Society},
    VOLUME = {348},
      YEAR = {1996},
    NUMBER = {3},
     PAGES = {1031--1050},
      ISSN = {0002-9947,1088-6850},
   MRCLASS = {22E27 (22D25 43A30 44A12)},
  MRNUMBER = {1370646},
MRREVIEWER = {Siegfried\ Echterhoff},
       DOI = {10.1090/S0002-9947-96-01583-8},
       URL = {https://doi.org/10.1090/S0002-9947-96-01583-8},
}

@article {DixmierDouady,
    AUTHOR = {Dixmier, Jacques and Douady, Adrien},
     TITLE = {Champs continus d'espaces hilbertiens et de {$C\sp{\ast}
              $}-alg\`ebres},
   JOURNAL = {Bull. Soc. Math. France},
  FJOURNAL = {Bulletin de la Soci\'et\'e{} Math\'ematique de France},
    VOLUME = {91},
      YEAR = {1963},
     PAGES = {227--284},
      ISSN = {0037-9484},
   MRCLASS = {46.65 (22.60)},
  MRNUMBER = {163182},
MRREVIEWER = {J.\ M. G. Fell},
       URL = {http://www.numdam.org/item?id=BSMF_1963__91__227_0},
}

\end{document}